\title{Dynamics of the Douglas-Rachford Method for Ellipses and $p$-Spheres\thanks{This is a pre-print of an article published in Set-Valued and Variational Analysis. The final authenticated version is available online at: \url{https://doi.org/10.1007/s11228-017-0457-0}}}
\author{Jonathan M. Borwein \and Scott B. Lindstrom \and Brailey Sims \and Anna Schneider \and Matthew P. Skerritt}
\institute{
	Jonathan M. Borwein \and Scott B. Lindstrom \and Brailey Sims \and Matthew P. Skerritt \at \textsc{carma}, University of Newcastle, Australia \\
	\email{brailey.sims@newcastle.edu.au} \\
	\textsc{orcid}: \href{https://orcid.org/0000-0003-2162-9097}{0000-0003-2162-9097} \\
	\email{scott.lindstrom@uon.edu.au} \\
	\textsc{orcid}: \href{https://orcid.org/0000-0003-4287-4788}{0000-0003-4287-4788} \\
	\email{matthew.skerritt@uon.edu.au} \\
	\textsc{orcid}: \href{https://orcid.org/0000-0003-2211-7616}{0000-0003-2211-7616}
	\and 
	Anna Schneider \at Universit\"{a}t der Bundeswehr, M\"{u}nchen
}
\date{\today}
\def\R{\hbox{$\mathbb R$}}
\begin{document}
	
	\maketitle
	
	\begin{abstract}
		We expand upon previous work that examined behavior of the iterated Douglas-Rachford method for a line and a circle by considering two generalizations: that of a line and an ellipse and that of a line together with a $p$-sphere. With computer assistance, we discover a beautiful geometry that illustrates phenomena which may affect the behavior of the iterates by slowing or inhibiting convergence for feasible cases. We prove local convergence near feasible points, and---seeking a better understanding of the behavior---we employ parallelization in order to study behavior graphically. Motivated by the computer-assisted discoveries, we prove a result about behavior of the method in infeasible cases.
		\keywords{Douglas-Rachford \and Feasibility \and Projection Algorithms \and Iterative Methods \and Discrete Dynamical Systems}
		\subclass{47H99 \and 49M30 \and 65Q30 \and 90C26}
	\end{abstract}
	
	\section{Introduction and Preliminaries}\label{sec:intro}
	
	The Douglas-Rachford algorithm \cite{DR} was introduced over half a century ago in connection with nonlinear heat flow problems to find a feasible point (point in the intersection) of two closed constraint sets $A$ and $B$ in a Hilbert space $H$. 
	
	We will denote the induced norm by $\|\cdot\|$. The projection onto a proximal subset $C$ of $H$ is defined for all $x \in H$ by
	\[P_C(x) := \left \{ z \in C : \|x - z\| = \inf_{z' \in C}\|x - z'\|\right \}\]
	When $C$ is closed and convex the projection operator $P_C$ is single valued and firmly nonexpansive. When $C$ is a closed subspace it is also linear and self-adjoint. For additional information, see, for example, \cite[Definition 3.7]{BC}. The reflection mapping through the set $C$ is then defined by
	\[R_C := 2P_C - I,\]
	where $I$ is the identity map on $H$.
	
	\begin{definition}[Douglas-Rachford Method]\label{def:DR}
		For two closed sets $A$ and $B$, and an initial point $x_0 \in H$, the Douglas-Rachford method generates a sequence $(x_n)_{n=1}^\infty$ as follows:
		\begin{equation}
		x_{n+1} \in T_{A,B}(x_n) \quad \text{where} \quad T_{A,B} := \frac{1}{2}\left( I + R_{B}R_{A}\right).
		\end{equation}	
	\end{definition}
	Figure~\ref{fig:oneiterate} illustrates the construction of one iteration of the Douglas-Rachford method.
	
	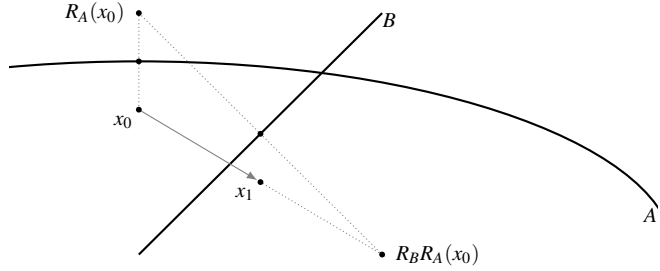
\begin{figure}
		\begin{center}
			\begin{tikzpicture}[x=0.7\textwidth,y=0.7\textwidth] 
			\clip (-0.2,-0.025) rectangle (0.8,0.4);
			\coordinate (x0) at (0, 0.225);
			\coordinate (PAx0) at (0, 0.3); 
			\coordinate (RAx0) at (0, 0.375);
			\coordinate (RBRAx0) at (0.375,0);
			\coordinate (PBRAx0) at (0.1875,0.1875); 
			\coordinate (x1) at (0.1875, 0.1125); 
			
			\draw[thick] (0,0) ellipse (0.825 and 0.3);
			\draw[thick] (0,0) -- (0.375,0.375);
			\draw[gray,-latex] (x0) -- ($(x0)!0.975!(x1)$);
			\draw[gray,densely dotted] (x0) -- (RAx0) -- (RBRAx0) -- (x1);
			\foreach \pt in {(x0), (PAx0), (RAx0), (PBRAx0), (RBRAx0), (x1) } {
				\fill \pt circle (0.0045);
			}
			\node[inner sep=2pt, below left] at (x0) {{\(x_0\)}};
			\node[inner sep=5pt, left] at (RAx0) {\(R_A\!\left(x_0\right)\)};
			\node[inner sep=5pt, right] at (RBRAx0) {\(R_BR_A\!\left(x_0\right)\)};
			\node[inner sep=2pt, below left] at (x1) {\(x_1\)};
            
			\node[inner sep=0pt, below right] at (0.375,0.375) {\(B\)};
            \node[inner sep=0pt, below left] at (0.8,0.073) {\(A\)};
			
			\end{tikzpicture}
			\caption{One iteration of the Douglas-Rachford method}
			\label{fig:oneiterate}
		\end{center}
	\end{figure}
	
	\begin{remark}[Notation]\label{remark:standards}
		Throughout, $x_n,x_0$ are as in Definition~\ref{def:DR}, $A,B$ are closed. When the two sets $A$ and $B$ are clear from the context we will simply write $T$ in place of $T_{A,B}$.
	\end{remark}
	
	\begin{theorem} [Bauschke, Combettes, and Luke \cite{BCL}]
		Suppose $A,B \subseteq H$ are closed and convex with non-empty intersection. Given $x_0 \in H$ the sequence of iterates $T_{A,B}$ converges weakly to an $x \in \operatorname{Fix} T_{A,B}$ with $P_{A}(x) \in A \cap B$.
	\end{theorem}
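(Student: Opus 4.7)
The plan is to decompose the result into three standard pieces: first, verify that $T_{A,B}$ is firmly nonexpansive and that its fixed-point set is nonempty; second, apply a classical weak-convergence theorem for averaged mappings to conclude weak convergence of the iterates to some element of $\operatorname{Fix} T_{A,B}$; third, use the fixed-point equation to show the claimed characterization via $P_A$. Throughout I will write $T = T_{A,B}$ and $R_A, R_B$ as in the introduction.

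First I would record the operator-theoretic properties of $T$. Since $A$ and $B$ are closed and convex, $P_A$ and $P_B$ are firmly nonexpansive by \cite[Definition 3.7]{BC}, so $R_A = 2P_A - I$ and $R_B = 2P_B - I$ are nonexpansive. Their composition $R_B R_A$ is therefore nonexpansive, and $T = \tfrac{1}{2}(I + R_B R_A)$ is the average of the identity with a nonexpansive map, hence $\tfrac{1}{2}$-averaged, which is equivalent to firmly nonexpansive. Next, I would verify that $\operatorname{Fix} T$ is nonempty. If $y \in A \cap B$ then $R_A y = R_B y = y$, so $T y = y$ and $A \cap B \subseteq \operatorname{Fix} T$. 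Conversely, since reflections are involutions, $x = T x$ forces $R_B R_A x = x$, and applying $R_B$ yields $R_A x = R_B x$, so $P_A x = P_B x$; hence any fixed point $x$ of $T$ satisfies $P_A x \in A \cap B$, which will also give the last assertion of the theorem once weak convergence is established.

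For the weak convergence itself, I would invoke the standard Krasnoselski\u\i--Mann theorem (equivalently, the weak convergence theorem for averaged nonexpansive self-maps with a fixed point, e.g.\ \cite[Ch.~5]{BC}). The two hypotheses required are exactly (i) $T$ is averaged, which was already established, and (ii) $\operatorname{Fix} T \neq \emptyset$, which follows from the observation above together with $A \cap B \neq \emptyset$. These together imply that the iterates $x_n = T^n x_0$ converge weakly to some $x \in \operatorname{Fix} T$. The mechanism behind this classical result, which I would sketch rather than reprove, is Fej\'er monotonicity of $(x_n)$ with respect to $\operatorname{Fix} T$ together with demi-closedness at $0$ of $I - T$ applied along weakly convergent subsequences, combined with Opial's lemma to rule out two distinct weak cluster points.

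Finally, having produced a weak limit $x \in \operatorname{Fix} T$, the fixed-point computation from the first paragraph yields $P_A x = P_B x \in A \cap B$, completing the statement. The genuine obstacle in this proof outline is the weak-convergence step: Fej\'er monotonicity only gives boundedness and convergence in norm of $\|x_n - z\|$ for each $z \in \operatorname{Fix} T$, and passing from weak cluster points to a unique weak limit requires the demi-closedness principle for the nonexpansive map $R_B R_A$, which is the non-trivial functional-analytic ingredient. Every other step is algebraic manipulation of reflections and projections.
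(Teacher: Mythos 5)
The paper states this result as a quoted theorem from Bauschke--Combettes--Luke and offers no proof of its own, so there is no in-paper argument to compare against; your overall architecture (firm nonexpansiveness of $T$, nonemptiness of $\operatorname{Fix}T$, Krasnosel'ski\u{\i}--Mann/Opial weak convergence, then the fixed-point characterization) is indeed the standard route. However, there is one genuine error: the step ``since reflections are involutions, applying $R_B$ to $R_BR_Ax=x$ yields $R_Ax=R_Bx$, so $P_Ax=P_Bx$.'' Reflections across general closed convex sets are \emph{not} involutions; $R_C\circ R_C=I$ holds for affine subspaces but fails already for $C=[0,1]\subset\mathbb{R}$ with $x=2$, where $R_C(R_C(2))=R_C(0)=0\neq 2$. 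Consequently you cannot cancel $R_B$, and the conclusion $P_Ax=P_Bx$ is not the correct description of $\operatorname{Fix}T$ in general (for a fixed point one gets $\langle P_Ax-x,\,b-P_Ax\rangle\le 0$ for all $b\in B$, which is the \emph{reverse} of the variational inequality characterizing $P_Bx=P_Ax$).

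The conclusion you actually need, $P_Ax\in A\cap B$ for $x\in\operatorname{Fix}T$, is salvageable by a direct computation that avoids any cancellation: from $Tx=x$ one gets $R_B(R_Ax)=x$, i.e.\ $2P_B(R_Ax)-R_Ax=x$, hence
\begin{equation*}
P_B(R_Ax)\;=\;\frac{x+R_Ax}{2}\;=\;\frac{x+(2P_Ax-x)}{2}\;=\;P_Ax,
\end{equation*}
so $P_Ax$ lies in $B$ (being a value of $P_B$) and trivially in $A$, whence $P_Ax\in A\cap B$. With this substitution the remainder of your outline --- $A\cap B\subseteq\operatorname{Fix}T$ giving nonemptiness, the averagedness of $T$, and the weak convergence of the iterates to some $x\in\operatorname{Fix}T$ via Fej\'er monotonicity and demiclosedness --- is correct and complete.
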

	In finite dimensions convergence in norm for convex sets is therefore assured. Notwithstanding the absence of a satisfactory theoretical justification, the Douglas-Rachford iteration scheme has been used to successfully solve a wide variety of practical problems in which one or both of the constraints are non-convex. Phase retrieval problems are one important instance, and the case of a line $L$ and circle $C$ in 2-dimensional Euclidean space---prototypical of such problems---was investigated by Borwein and Sims \cite{BS} as a specific case of the higher dimensional problem of a line and a sphere in Hilbert space.
	
	Despite the seeming simplicity of the situation, the Douglas-Rachford method applied to $L$ and $C$ proved surprisingly difficult to analyze. Among the partial results obtained in the feasible case was local convergence to each of the two feasible points. Based on this and extensive computer experimentation, Borwein and Sims were led to ask whether this could be extended to convergence to one or other of the two intersection points for all starting points except those lying on a \lq\lq singular set" $S_{0}$; the line of symmetry perpendicular to $L$ and passing through the centre of $C$. Borwein and Arag\'{o}n Artacho \cite{AB} established sizable domains of attraction for each of the feasible points, and the global question was answered in the affirmative by Benoist \cite{Benoist} who obtained the result by constructing a suitable Lyapunov function, see figure  \ref{fig:2sphere-lyapunov}.
	
	The singular set $S_{0}$ is invariant under the Douglas-Rachford operator $T_{C,L}$ and contains period 2 points if and only if $L$ passes through the centre of $C$ in which case all the points of $L$ inside $C$ are period 2 points. When $L$ is tangential to $C$ all points on $S_{0}$ are fixed by $T_{C,L}$, for other positions of $L$ the iterates exhibits periodic behaviors when rational commensurability is present, while in the absence of such commensurability the behaviors may be quite chaotic. See \cite{BS} for more details.
	
	\begin{figure}
		\begin{center}
			\includegraphics[width=0.7\textwidth]{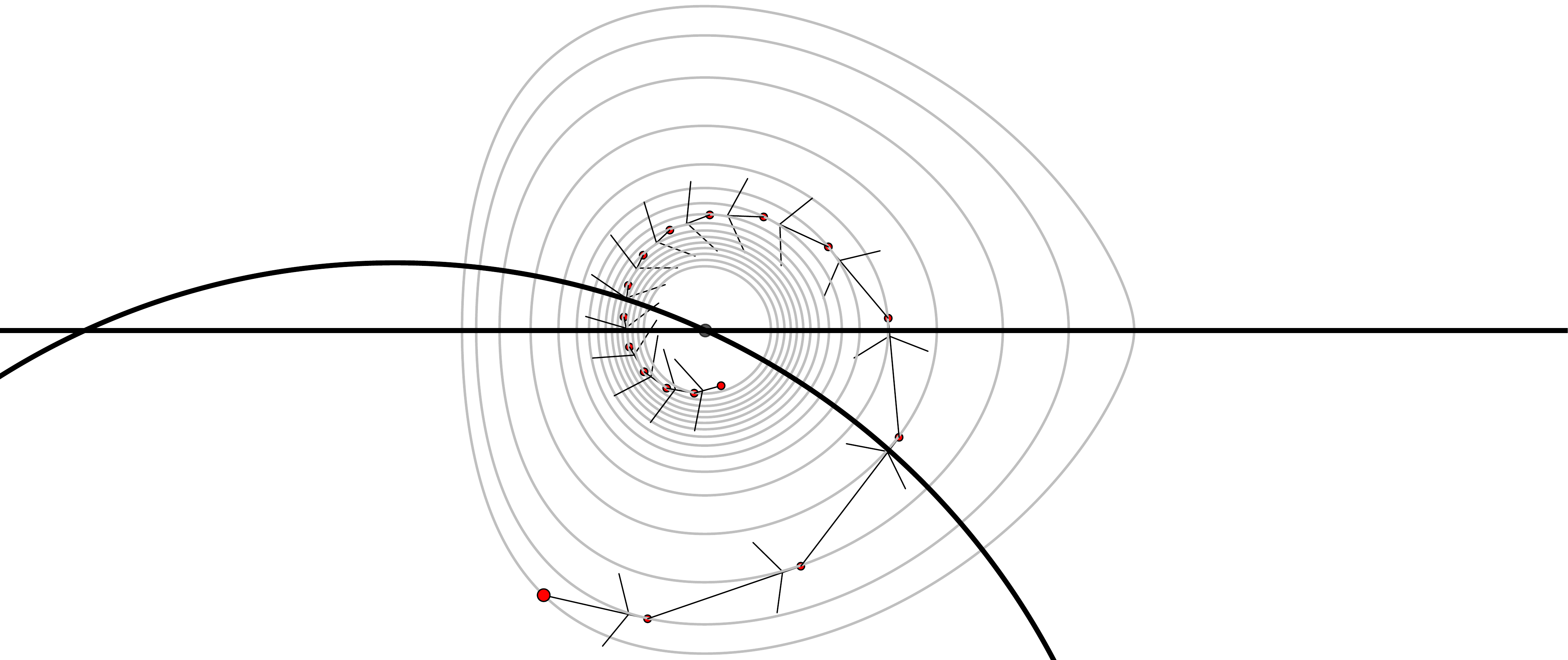}
			\caption{Douglas-Rachford on the 2-sphere and line showing the level sets of the Lyapunov function \cite{Benoist}}
			\label{fig:2sphere-lyapunov}
		\end{center}
	\end{figure}
	
	In order to gain further insights into the behavior of the Douglas-Rachford algorithm in the case of nonconvex constraint sets we consider two generalizations of a line  and  sphere (circle) in 2 dimensional Euclidean space, namely:  that of a line together with an ellipse and that of a line together with a $p$-sphere.
	
	These seemingly innocuous generalizations, while open to exploration and local analysis about the feasible points, may be impossible to analyze in full. The singular set is no longer a simple curve but rather exhibits a complex (and fascinating) geometry involving a rich array of periodic points and associated domains of attraction. In the case of an ellipse and line we observe the appearance of higher order periodic points as the ellipticity is increased.
	
	\begin{definition}[Periodic points and domains]
		The following terms, already used informally, help inform our discussion.
		\begin{enumerate}
			\item A point $x$ is a \emph{periodic point of period $m$} (or a \emph{period $m$ point}) if $T_{A,B}^{m}x = x$ (A period $1$ point is simply a fixed point of $T_{A,B}$).
			\item The \emph{domain of attraction} (or \emph{attractive domain})for a period $m$ point $x$ is the set of all $x_0$ satisfying 
			\begin{equation}\label{attractive}
				\underset{k\rightarrow \infty}{\lim}T^{km}_{A,B}(x_0) = x.
			\end{equation}
			\item A point $x$ is \emph{attractive} if its domain of attraction contains a neighborhood of $x$.
			\item The \emph{singular set} consists of all points not belonging to a domain of attraction for any feasible point.
			\item A period $m$ point is said to be \emph{repelling} if there exists a neighborhoods $N_{x}$ of $x$ such that for every $x_{0} \in N_{x}\backslash \{x\}$ the sequence $(T^{km}_{A,B}(x_{0}))_{k=1}^{\infty}$ eventually lies outside of $N_{x}$.
		\end{enumerate}
	\end{definition}
	Of course if $S$ is a domain of attraction for a period $m$ point $x$ then for $k = 1,2,\cdots m-1$ it follows that $T^k(S)$ is a domain of attraction for $T^k(x)$. This is a notable feature in many of our graphics, see for instance Figure~\ref{fig:basinsdefinition}.
	
	\begin{figure}
		\begin{center}
			\includegraphics[width=0.7\textwidth]{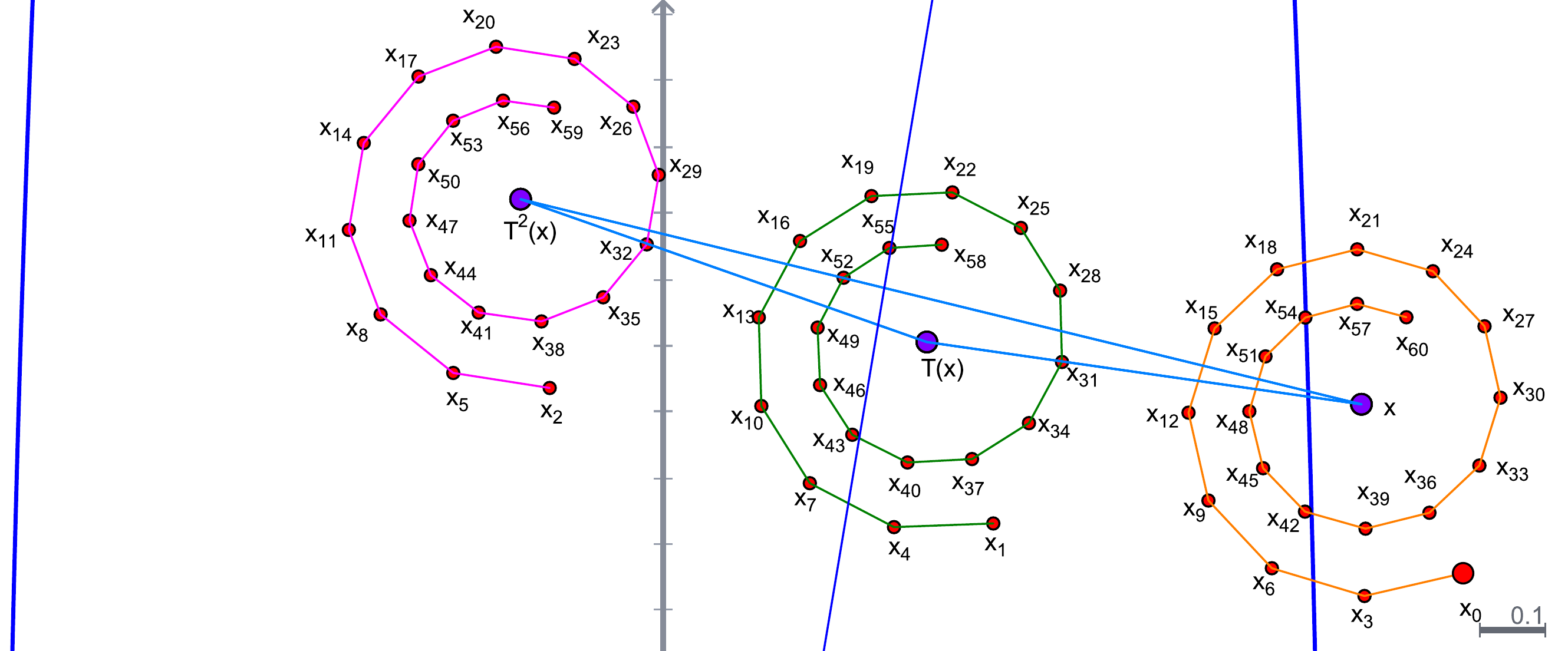}
			\caption{Domain of attraction for period 3 points in the case of $E_{6}$, $L_{8}$}
			\label{fig:basinsdefinition}
		\end{center}
	\end{figure}
	
	\subsection{Notation}
	By a suitable rotation and scaling of axes we may without loss of generality take our ellipse and  $p$-sphere respectively to be
	\begin{align}\label{notations}
	E_{b}\ &:=\ \Big\{ (x,y)\in \mathbb{R}^2 | \; \varphi_b(x,y):=x^2+\left(\frac{y}{b}\right)^2 =1\Big\} \quad 	\textrm{and}\\
	S_{p}\ &:= \Big\{ (x,y)\in \mathbb{R}^2 | \; \theta_p(x,y):=\left(x\right)^p+\left(y\right)^p = 1 \Big\},\nonumber
	\end{align}
	and will write:
	\begin{align*}
	L_{m,\beta}\ :=\ \{(x,y)\in \R^{2} | y=mx+\beta\}\quad \textrm{and} \quad L_m := L_{m,0}.
	\end{align*}
	When it is clear from the context what the parameters are we will simply write $E$, $S$ or $L$ respectively. Similarly, when the context makes it clear we will write $T$ in place of   $T_{E,L}$ or $T_{S,L}$.
	
	\subsection{Computation of projections}
	
	For the case of the $2$-sphere, the closest point projection has a simple closed form. For $x \ne 0$, $P_S(x)=x/\|x\|$. Such a simple closed form is immediately lost for any ellipse with $b\ne 1$ or any $p$-Sphere with $p\notin\{1,2\}$ because---where $\varphi_b,\theta_p$ are as in \eqref{notations}---the induced Lagrangian problems
	\begin{align*}
	P_{E_b}(x)&= \Big\{x'\;\; |\;\; \lambda \nabla d(x,\cdot)^2 (x') =\nabla \varphi_b(x'),\quad \varphi_b(x')=1\Big\} \\
	P_{S_p}(x)&= \Big\{x' \;\;|\;\; \lambda \nabla d(x,\cdot)^2 (x') =\nabla \theta_p(x'),\quad \theta_p(x')=1\Big\}
	\end{align*}
	yield implicit relations that no longer admit explicit solutions. To compute the required projections necessitates the use of numerical methods.
	
	A description of the optimized function solvers used is available in the appendix \cite{APPENDIX}. Many of our implementations of these function solvers---for example, that used to generate Figure~\ref{fig:basinsdefinition}---employ the interactive geometry software \emph{Cinderella}, available at \url{https://cinderella.de/}.
	
	\section{The case of an ellipse and a line}
	
	In the case of an ellipse and a line, the singular set---in contrast to the case of a circle and a line---is no longer a simple curve,  and appears to contain periodic points in many cases. For example, Figure~\ref{fig:BasinsAll} shows periodic points for $T_{E_8,L_6}$ with attendant subsets of their attractive domains. The singular set is larger than suggested here (see Figure~\ref{fig:FullEllipseBasinsFullPage} for a more complete depiction).
	
	\begin{figure}
		\begin{center}
			\includegraphics[angle=90,width=0.7\textwidth]{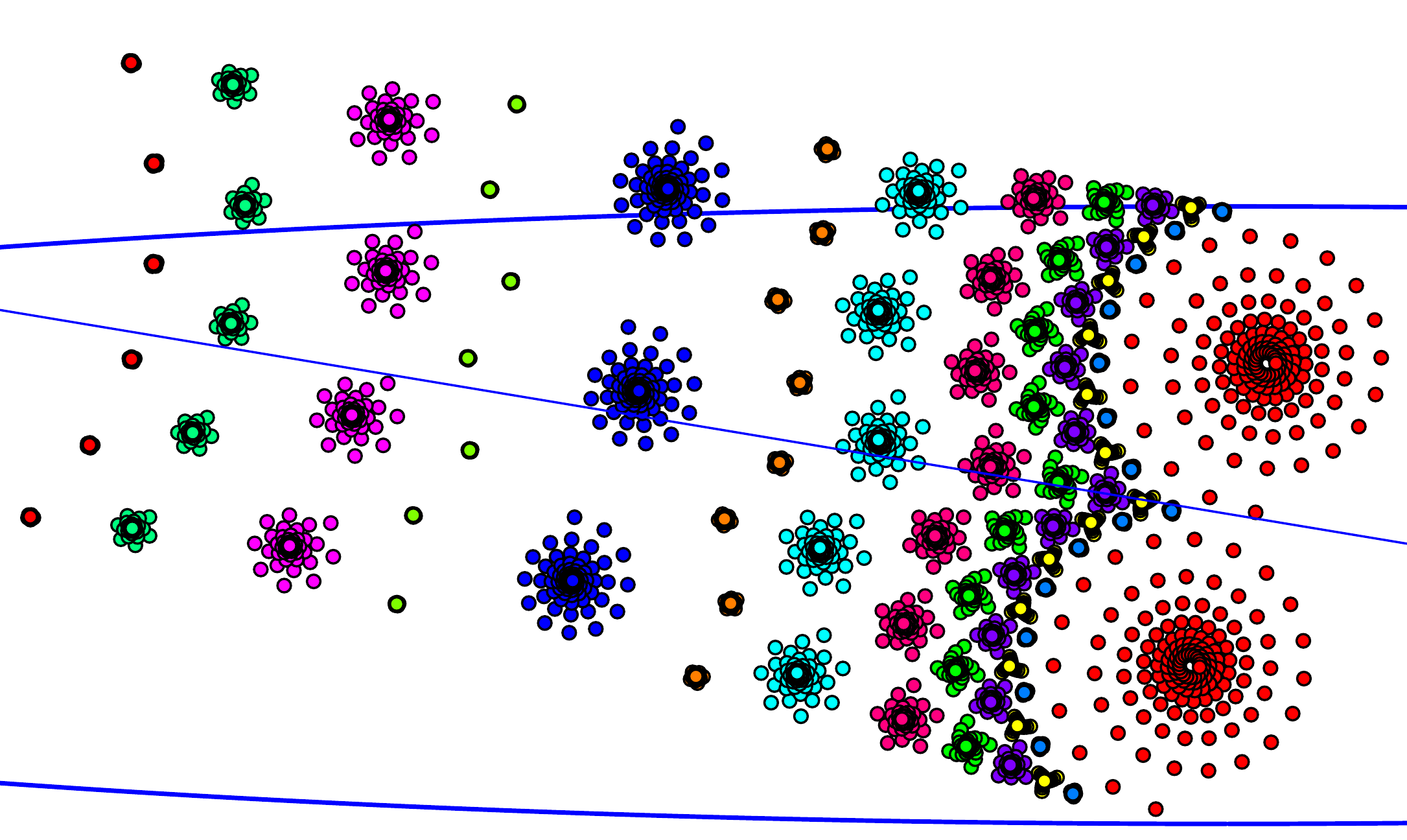}
			\caption{Partial domains of attraction of $T_{E_8,L_6}$ for points of different periodicities}
			\label{fig:BasinsAll}
		\end{center}
	\end{figure}
	
	For simplicity, we set the line intercept at $0$ for our pictures and tables. It should be noted that, in contra distinction to the case of a circle and line, similar behavior can be observed with nonzero intercepts (although symmetries are lost).
	
	\begin{table}
		\begin{center}
			\caption{Periods observed for attractive domains for various ellipse and line configurations}
			\label{tbl:periodicitytable1}
			\begin{tabular}{l | l l l l l}
				& $E_2$& $E_3$& $E_4$& $E_5$ & $E_6$  \\ \hline
				$L_1$& 2 & 2    & 2      & 2& 2\\
				$L_2$&  & 2,3 & 2,3    & 2,3& 2,3\\
				$L_3$&  & 2,3 & 2,3,5  & 2,3,4,5,7& 2,3,4,5,7\\
				$L_4$&  &   & 2,3,5,7& 2,3,4,5,7,9& 2,3,4,5{\scriptsize($\times2$)},7,9\\
				$L_5$&  &   & 2,3    & 2,3,4,5,7,9,11,13& 2,3,4,5,7,9,11,13\\
				$L_6$&  &   & & 2,3,5,7& 2,4,5{\scriptsize($\times 2$)},7,9,11,13,15\\
				$L_7$& & & & 3 &2,3,4,5,7,9,11,13\\
				$L_8$& & & & &2,3,5 \\ \hline
				\multicolumn{6}{l}{Note: some periodicity's were observed in more than one domain.}
			\end{tabular}
		\end{center}
	\end{table}

	The number and periodicity of the points appears to be related to both the eccentricity of the ellipse, and the angle of the line. As the eccentricity is increased, we observe growth in both the number of periodic points and the maximum periodicity. Table~\ref{tbl:periodicitytable1}, obtained experimentally using  \emph{Cinderella}, summarizes our findings. Note that the method used required interactively moving a point in the geometry package, and visually observing the attractive domains. As such it is regrettably possible that some periodic points were missed, either because their domains of attraction were too small or they were not attractive points.
	
	We can describe the period 2 points of $T_{E_b,L_m}$ with a closed form that, while complicated to state, is quick to evaluate \cite{APPENDIX}. Determining period 2 points algebraically is useful for corroborating some of the behaviors we observe in \emph{Cinderella}. However, the degree of complication associated with the analysis of even this simplest case of a non-fixed periodic point suggests that fully describing all behavior globally with explicit forms would be an impractical undertaking. This, in part, led us to pursue the computer assisted evidence-gathering approach we describe in subsections~\ref{subsec:Numerical}.
	
	The nature of the periodic points is also sensitive to small perturbations of the line. We can see above that for lines of small slope there are only a few attractive periodic points, and as the slope increases additional points with higher periodicity emerge. As the slope becomes large, some of the attractive domains appear to shrink in size until eventually the associated periodic point ceases to be attractive. This appears to be the eventual fate of all periodic points.
	
	\begin{figure}
		\begin{center}
			\includegraphics[width=0.16\textwidth]{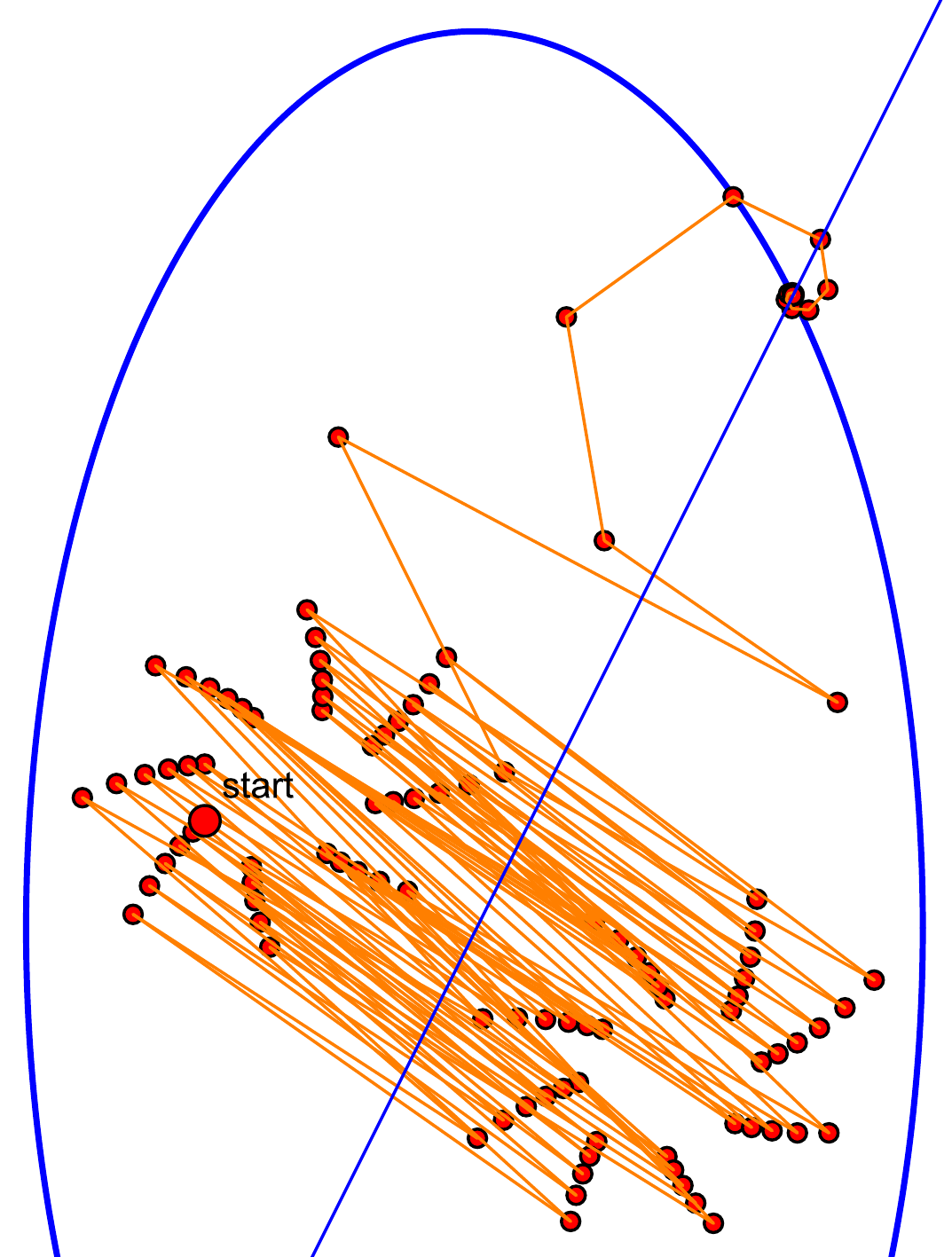} \hspace{0.02\textwidth}
			\includegraphics[width=0.16\textwidth]{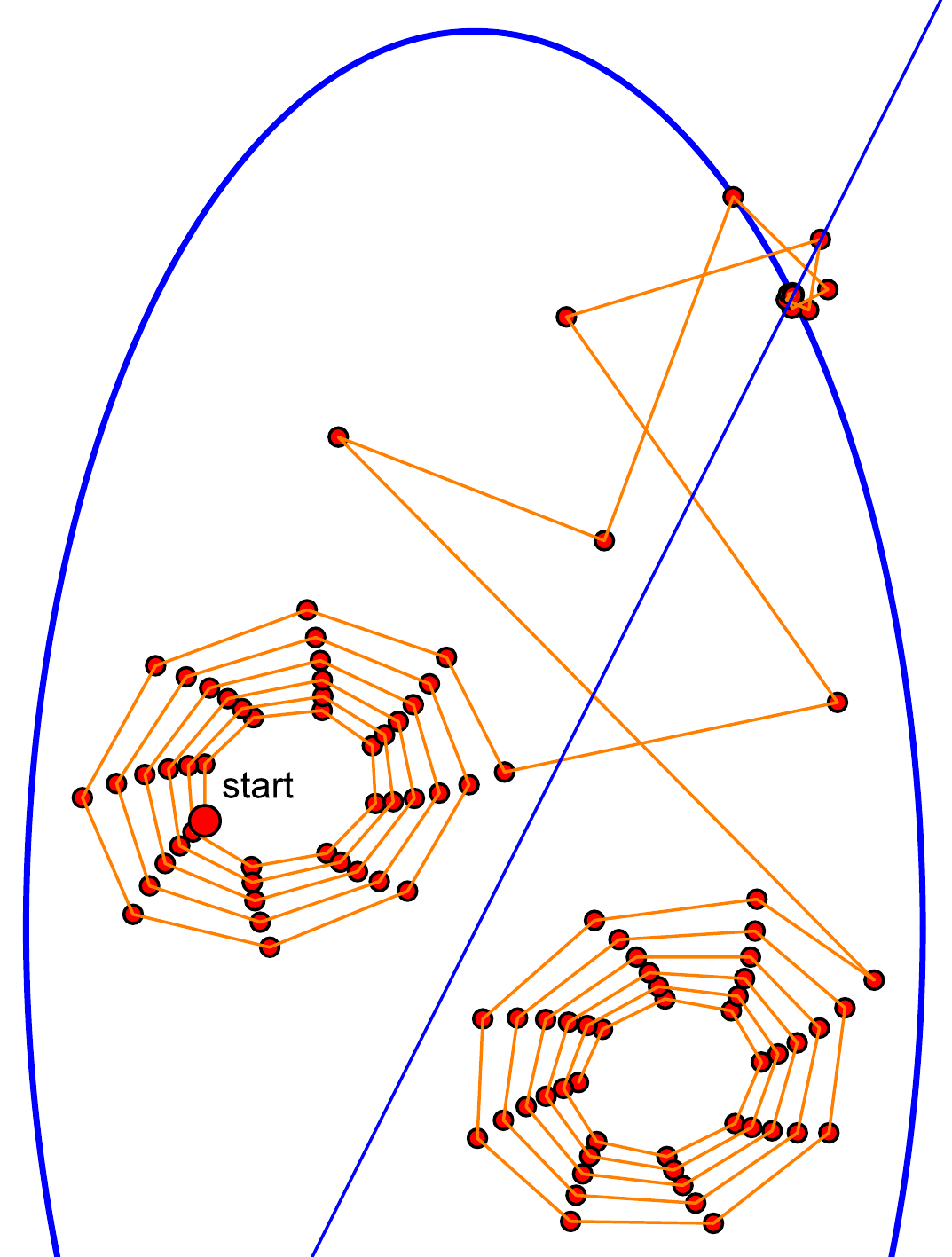} \hspace{0.02\textwidth}
			\includegraphics[width=0.16\textwidth]{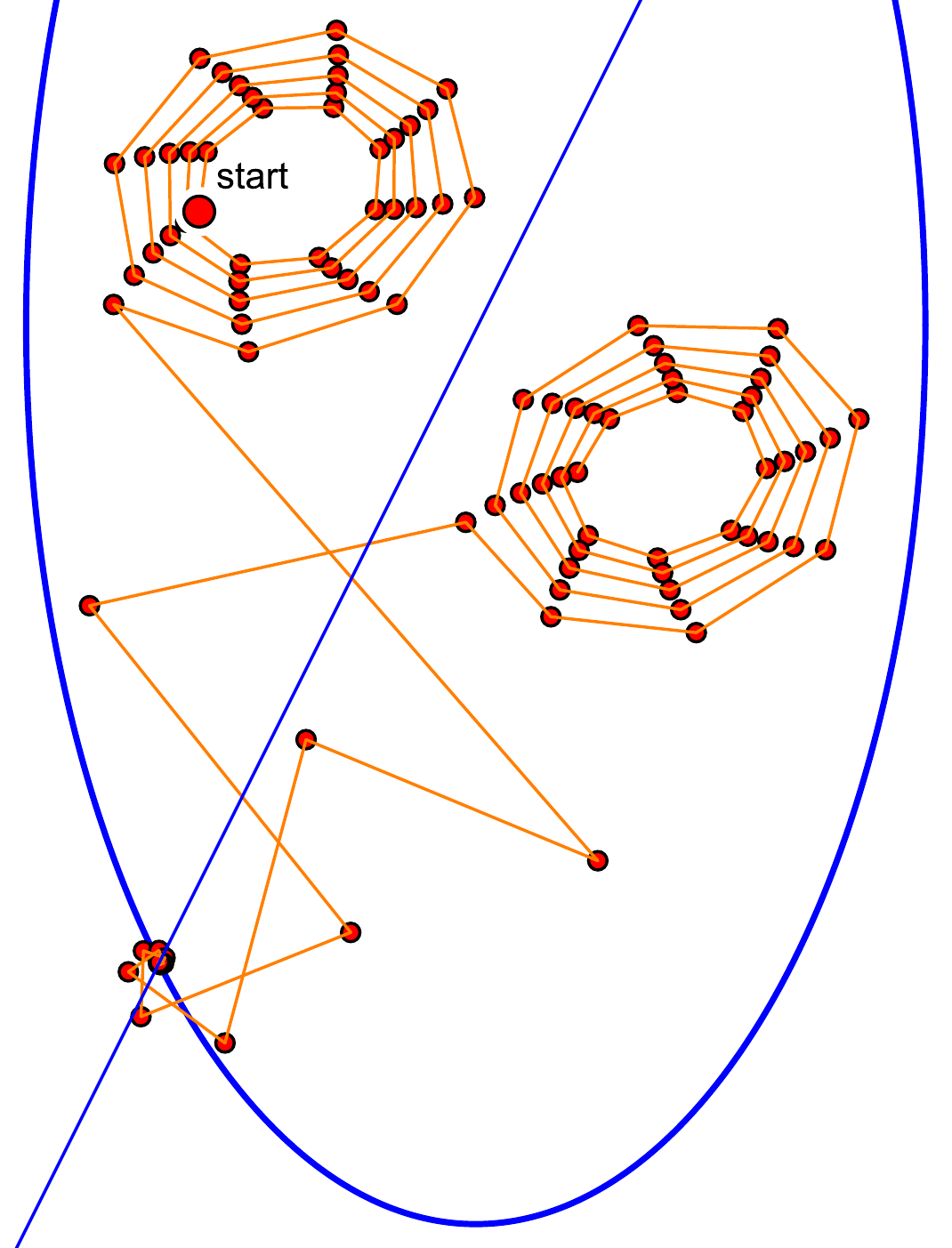} \hspace{0.02\textwidth}
			\includegraphics[width=0.16\textwidth]{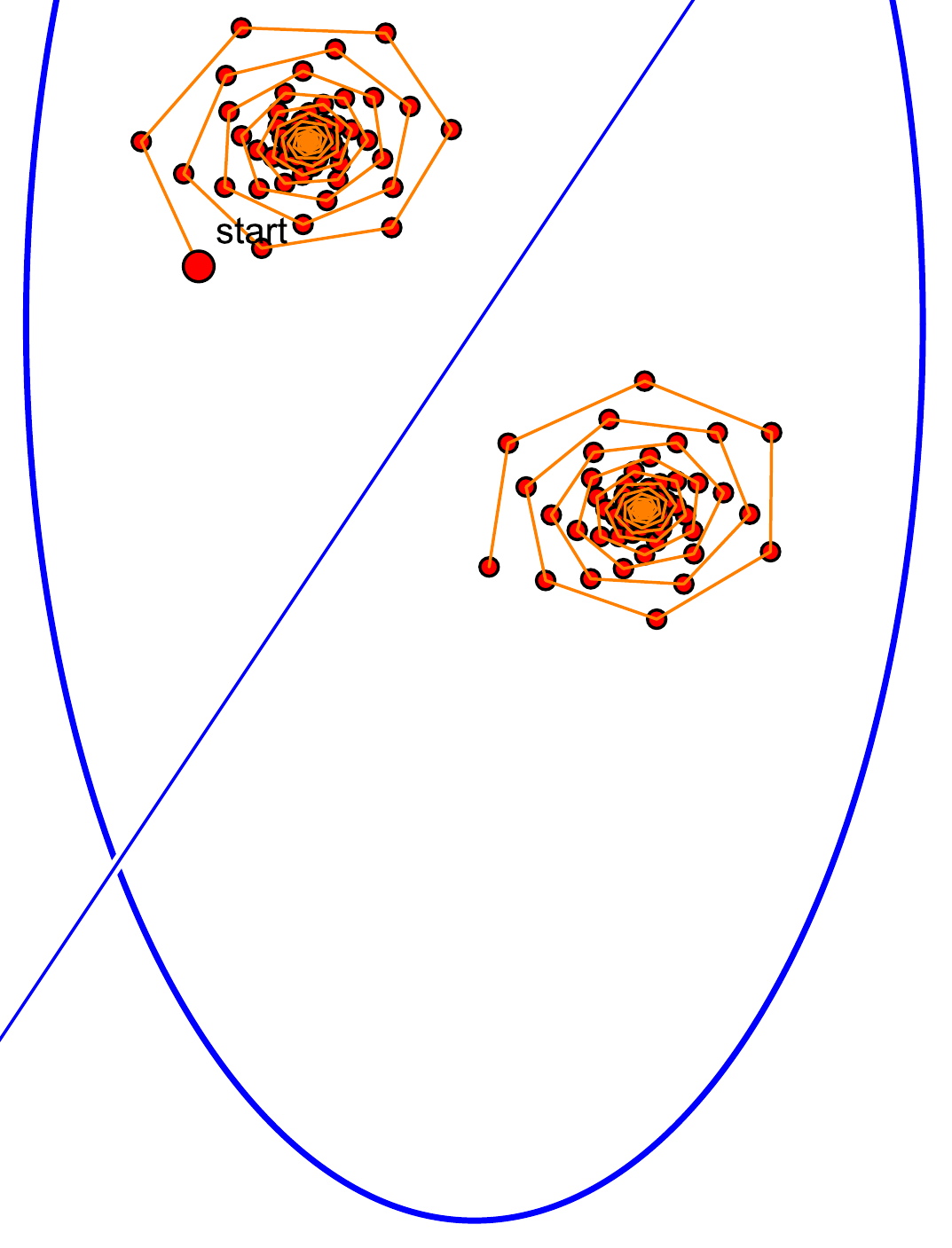}
			\caption{Sensitivity of behaviors to small changes in line slope}
			\label{fig:SourcesandSinks}
		\end{center}
	\end{figure}
	
	\begin{figure}
		\begin{center}
			\setlength\tabcolsep{0.01\textwidth} 
			\begin{tabular}{cccc}
				\includegraphics[width=0.16\textwidth]{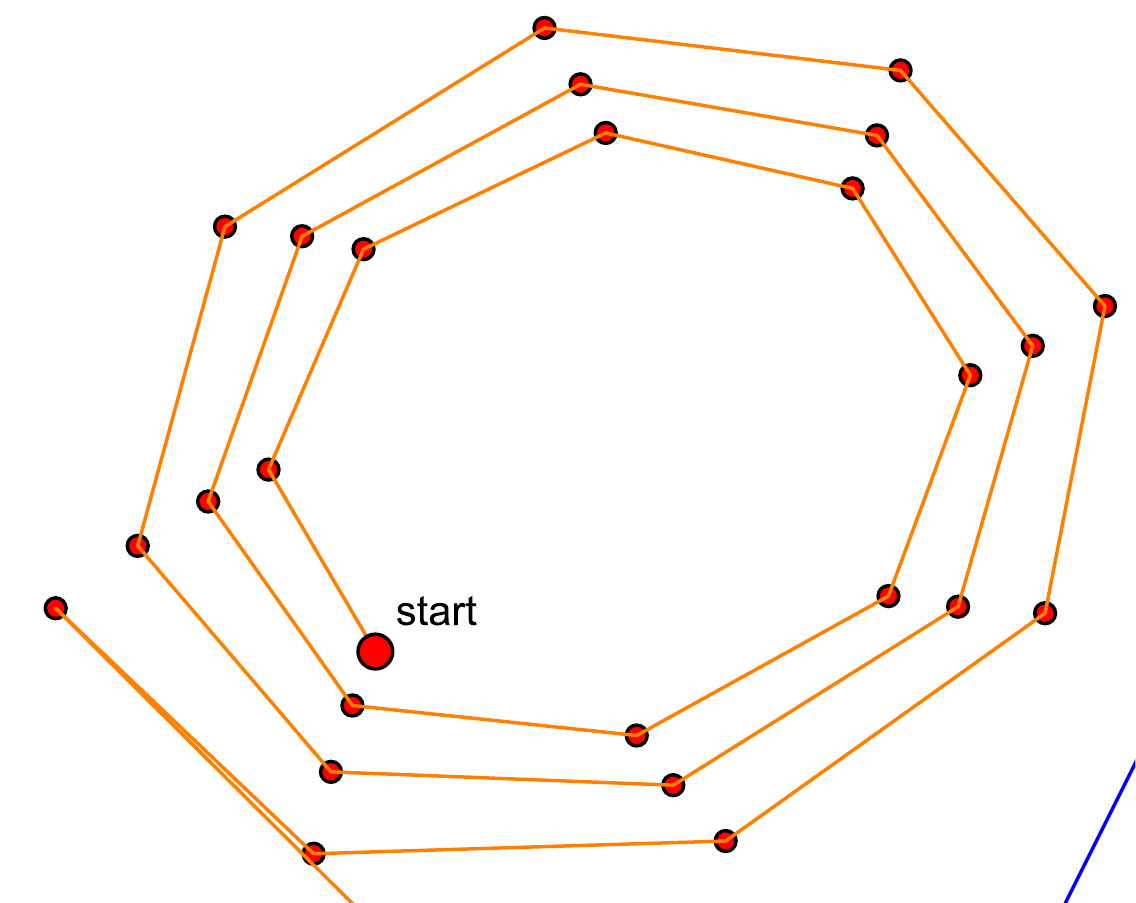} &
				\includegraphics[width=0.16\textwidth]{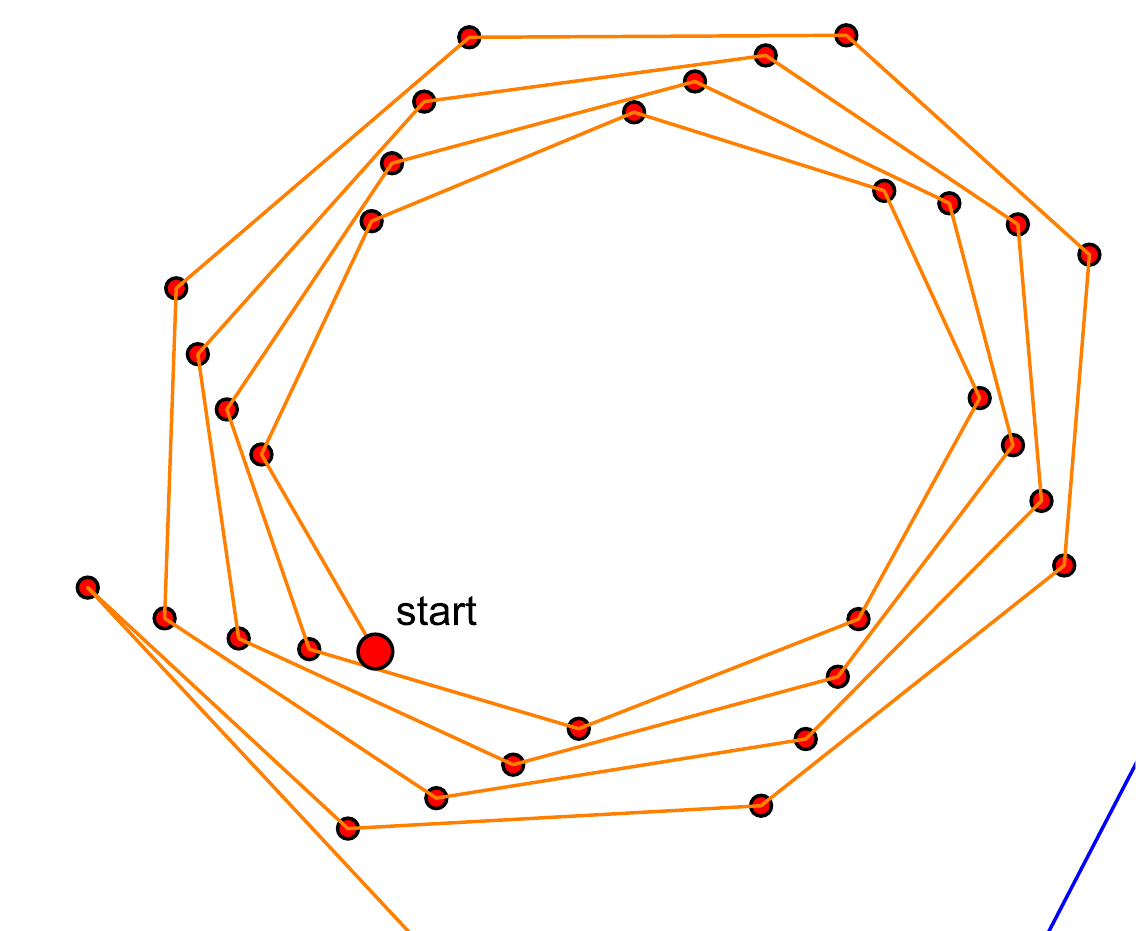} &
				\includegraphics[width=0.16\textwidth]{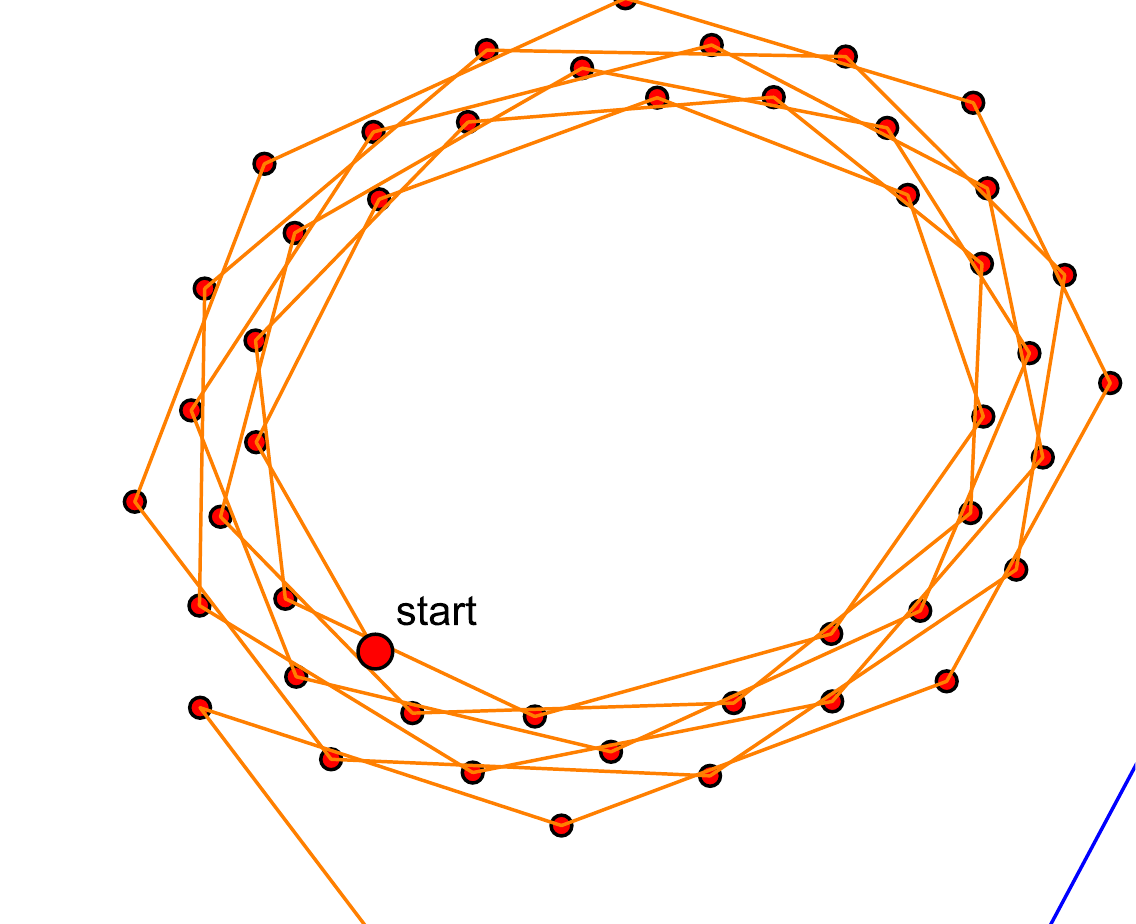} &
				\includegraphics[width=0.16\textwidth]{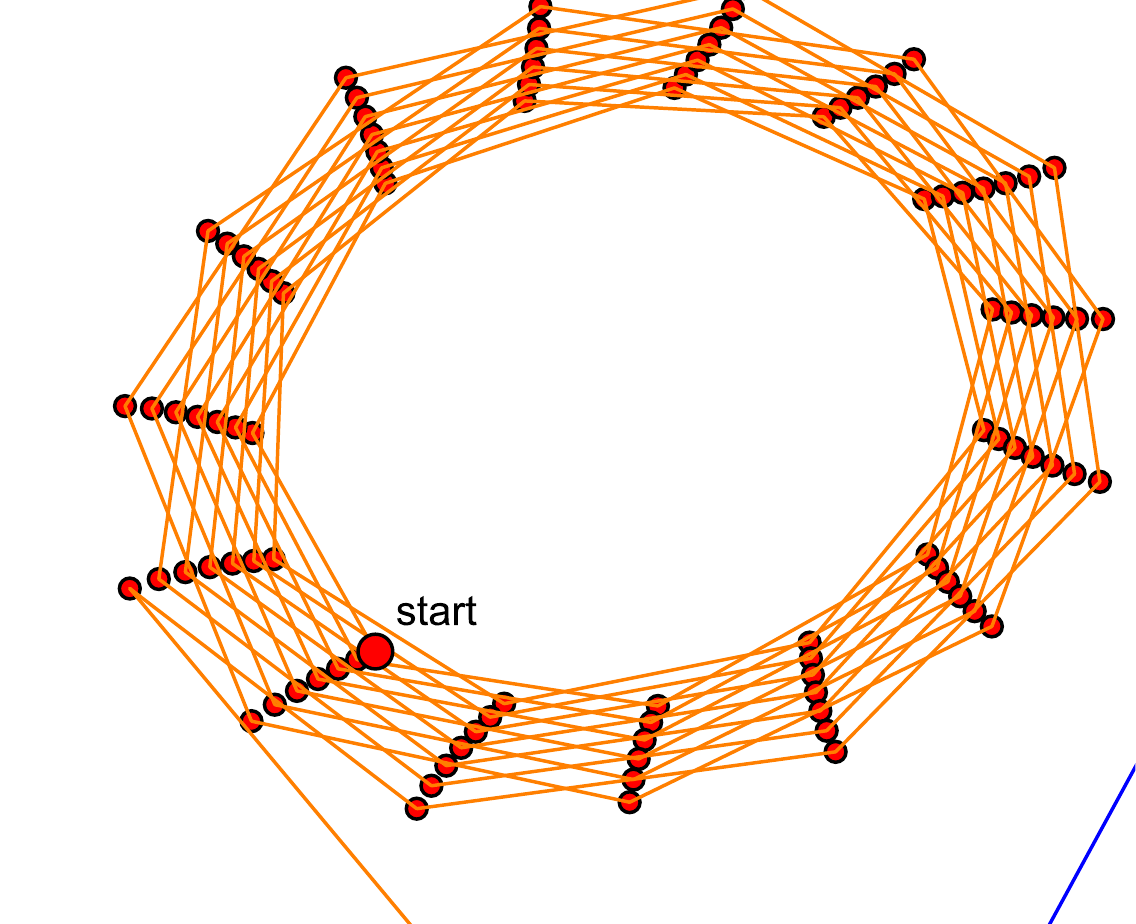} \\
				\includegraphics[width=0.16\textwidth]{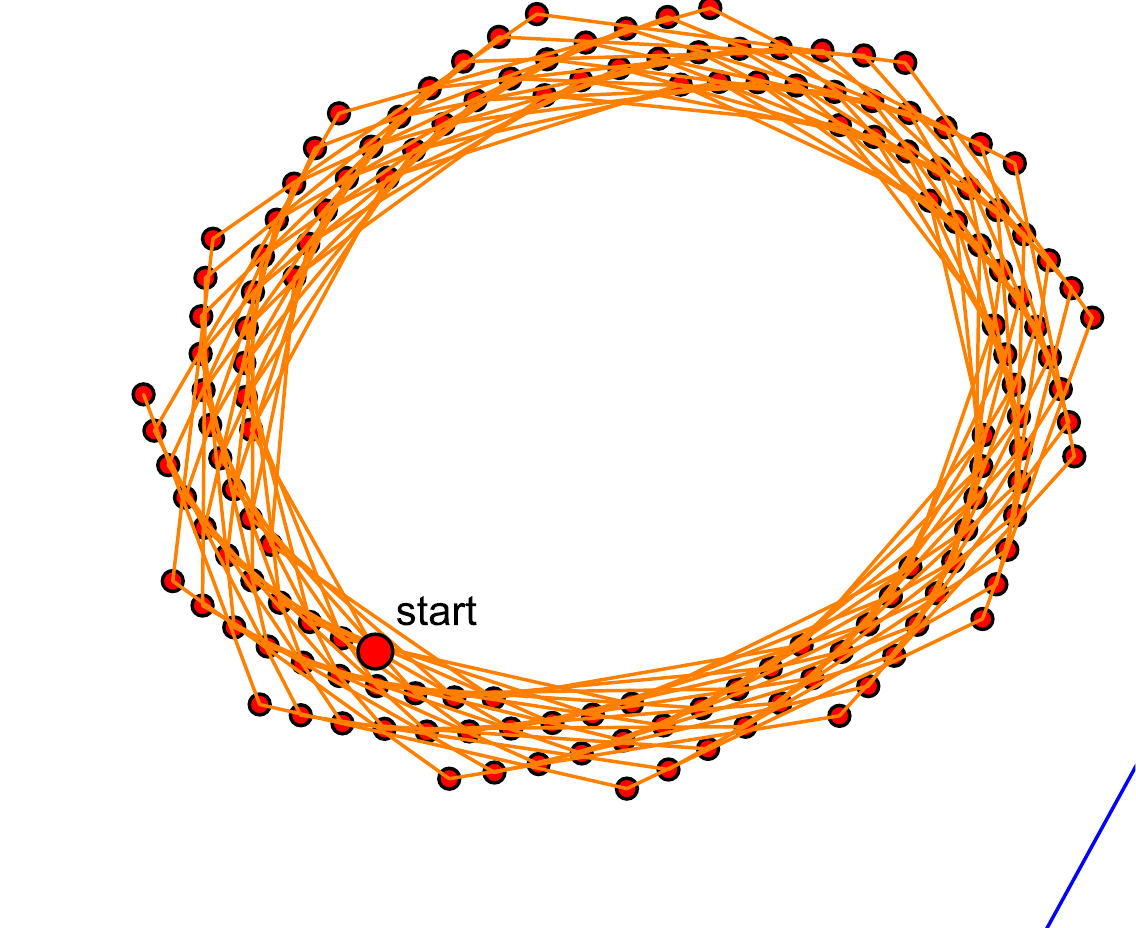} &
				\includegraphics[width=0.16\textwidth]{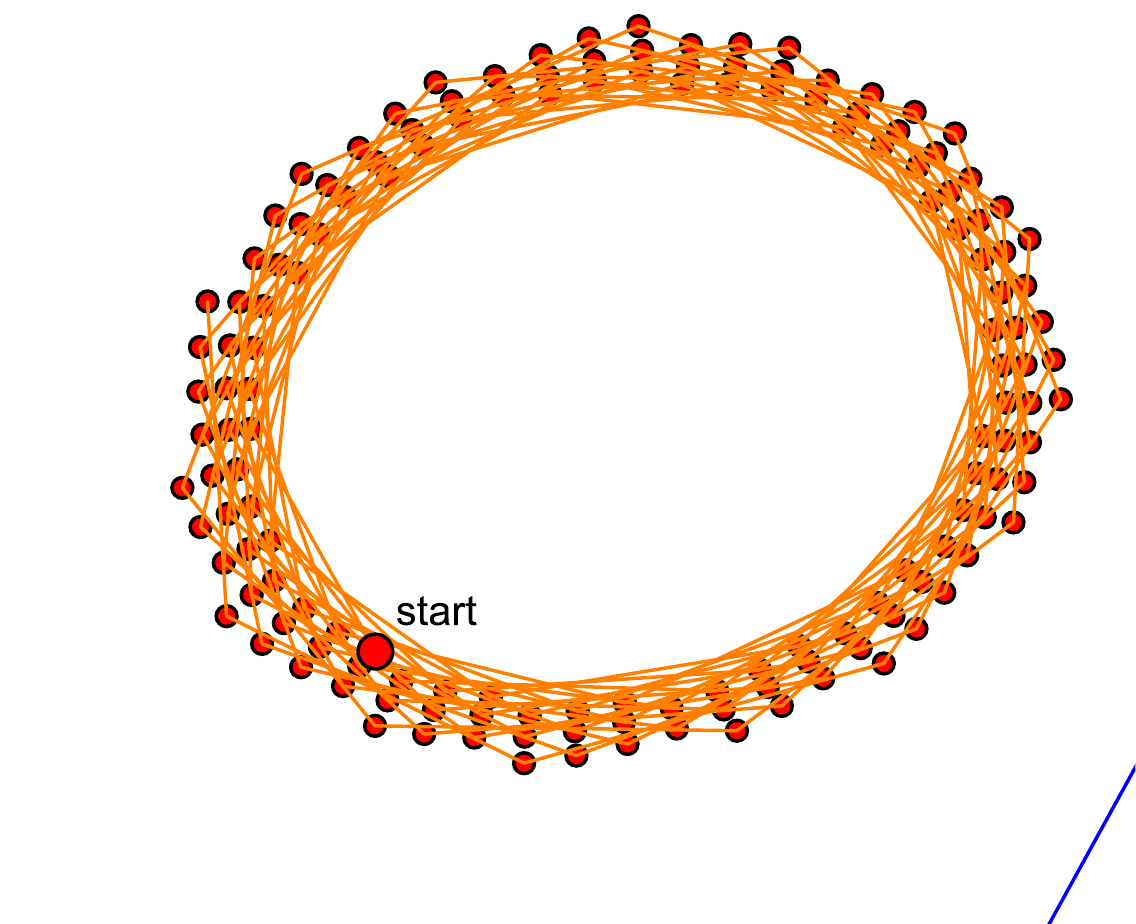} &
				\includegraphics[width=0.16\textwidth]{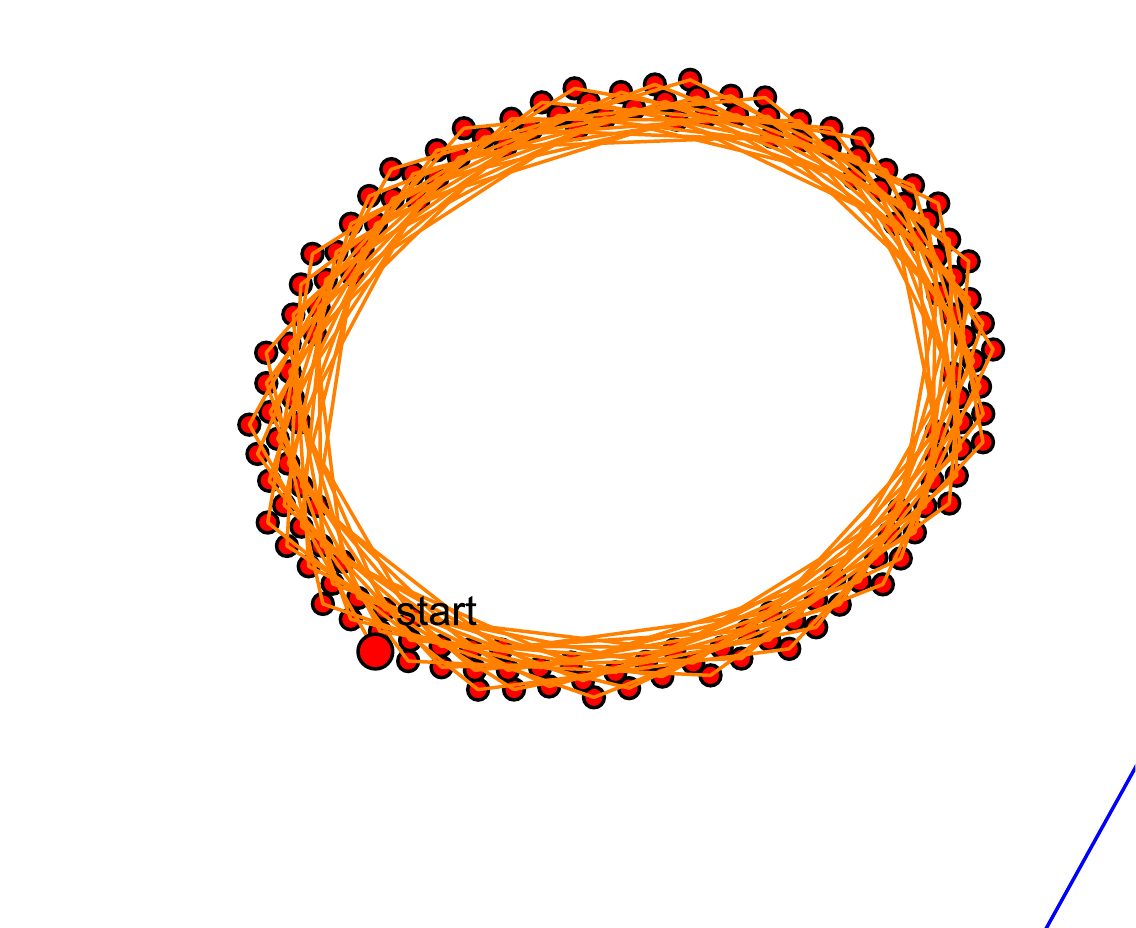} &
				\includegraphics[width=0.16\textwidth]{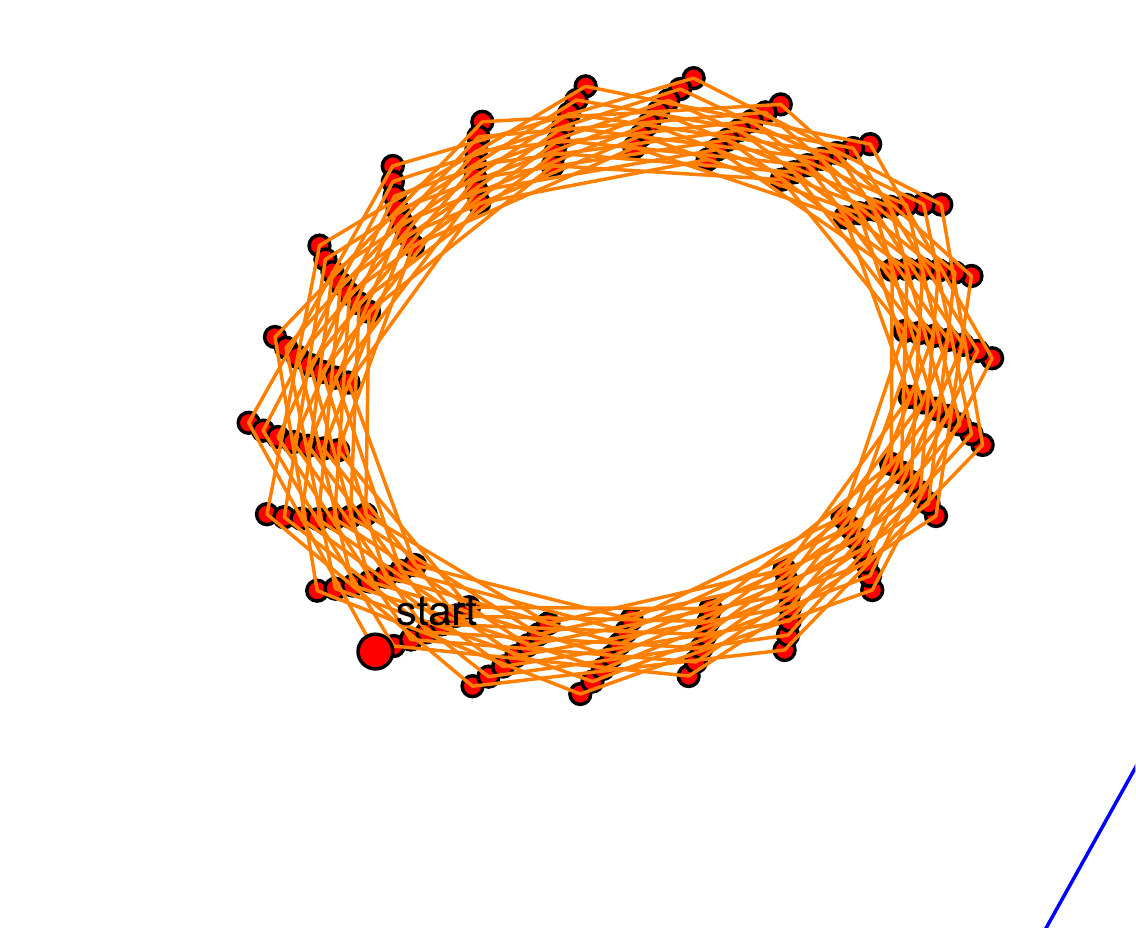} \\
				\includegraphics[width=0.16\textwidth]{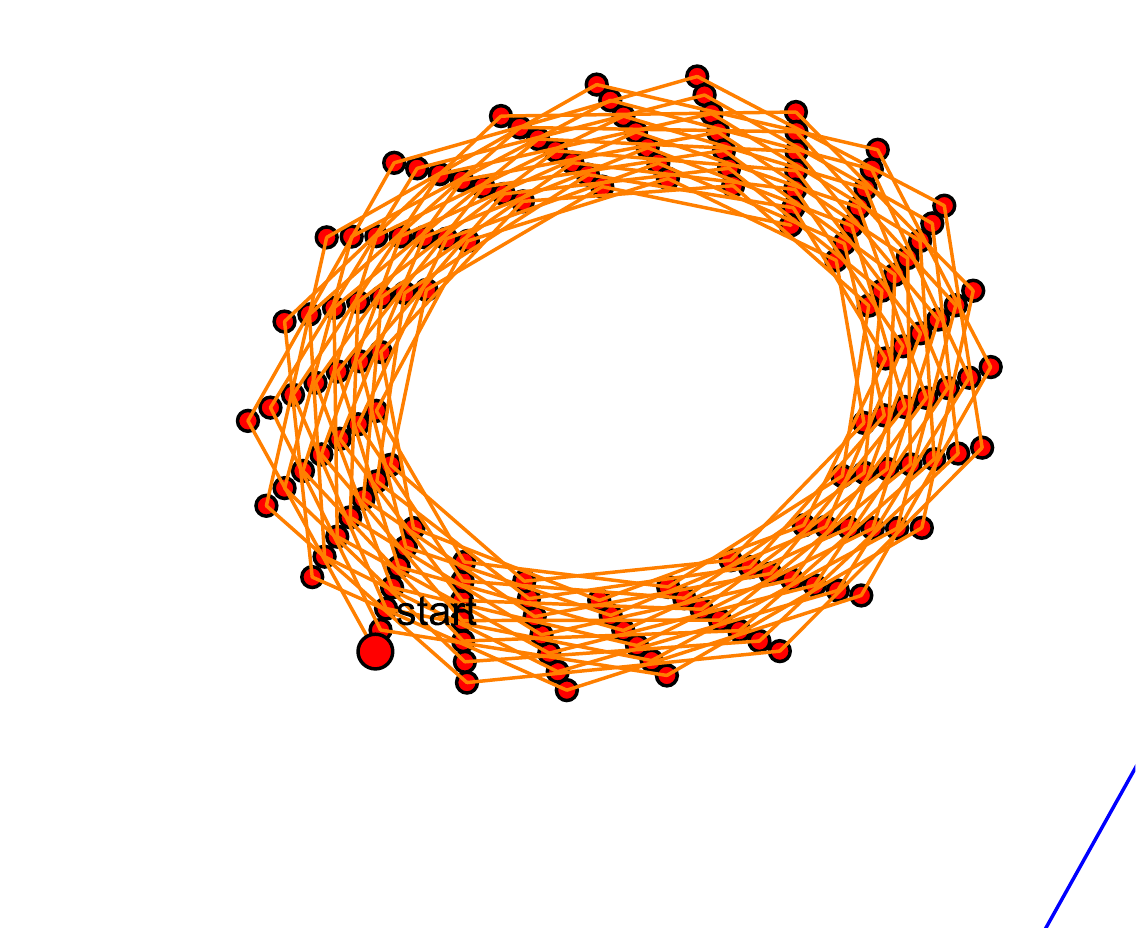} &
				\includegraphics[width=0.16\textwidth]{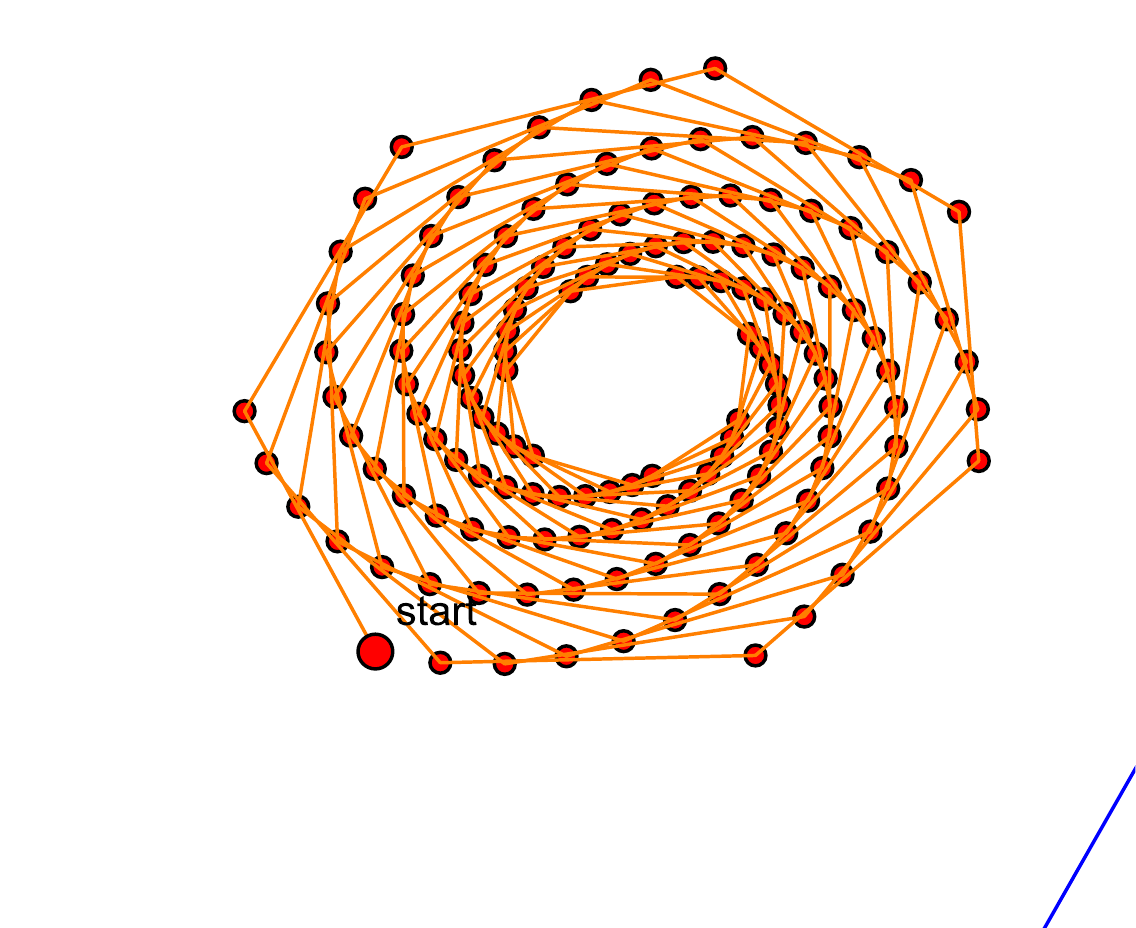} &
				\includegraphics[width=0.16\textwidth]{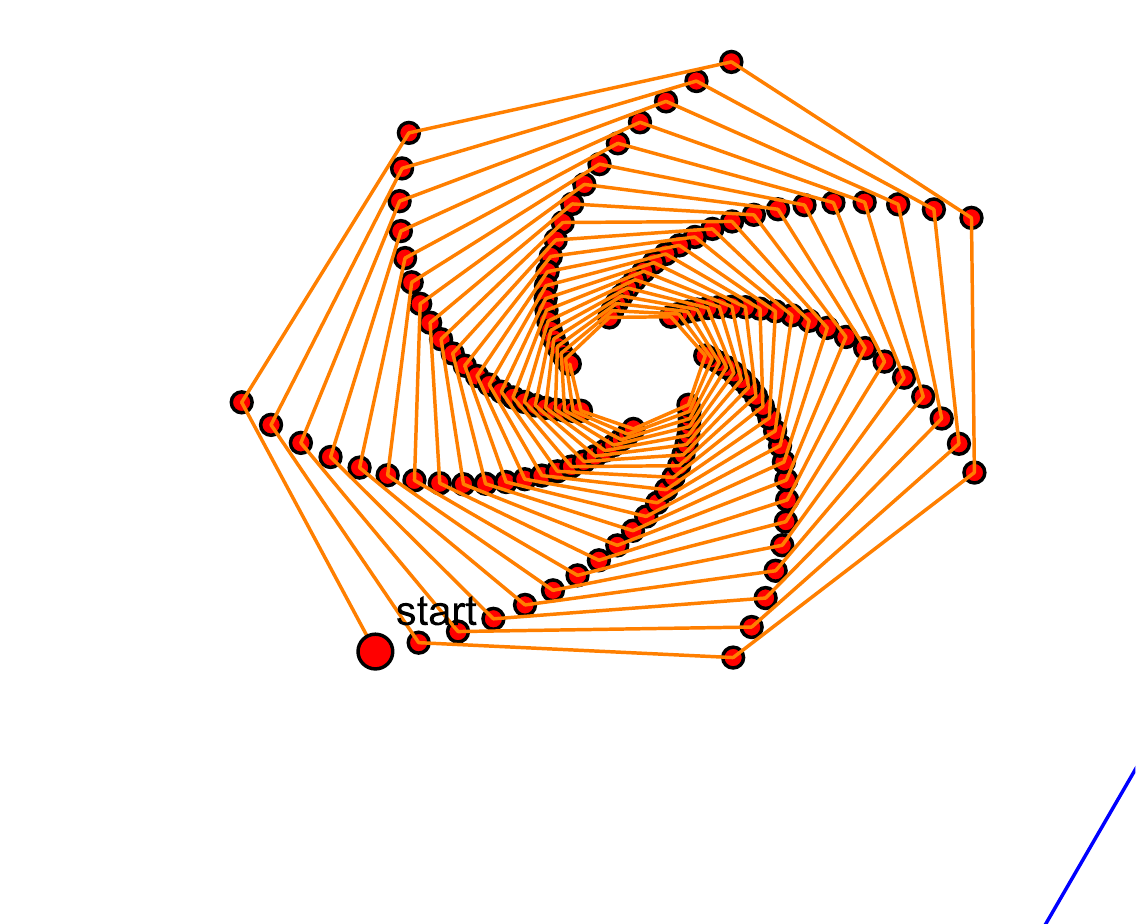} &
				\includegraphics[width=0.16\textwidth]{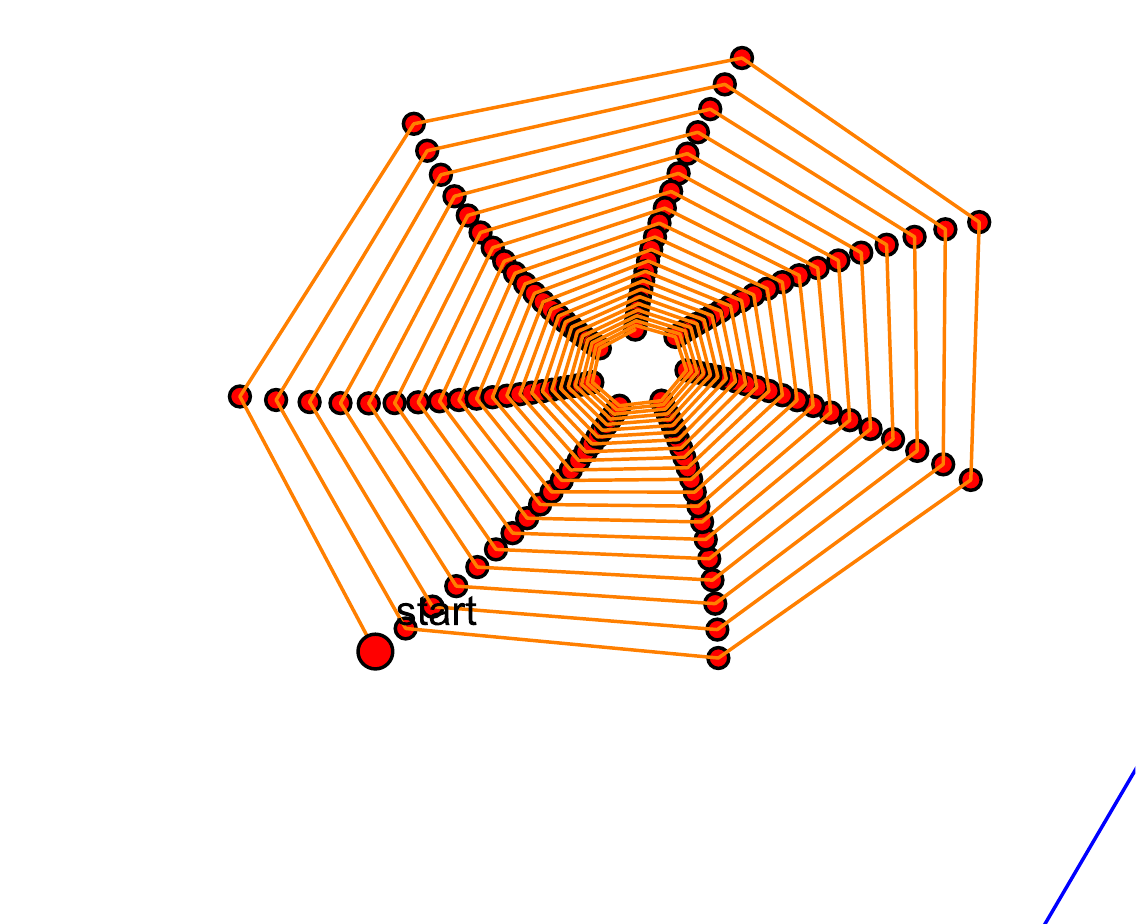} \\
				\includegraphics[width=0.16\textwidth]{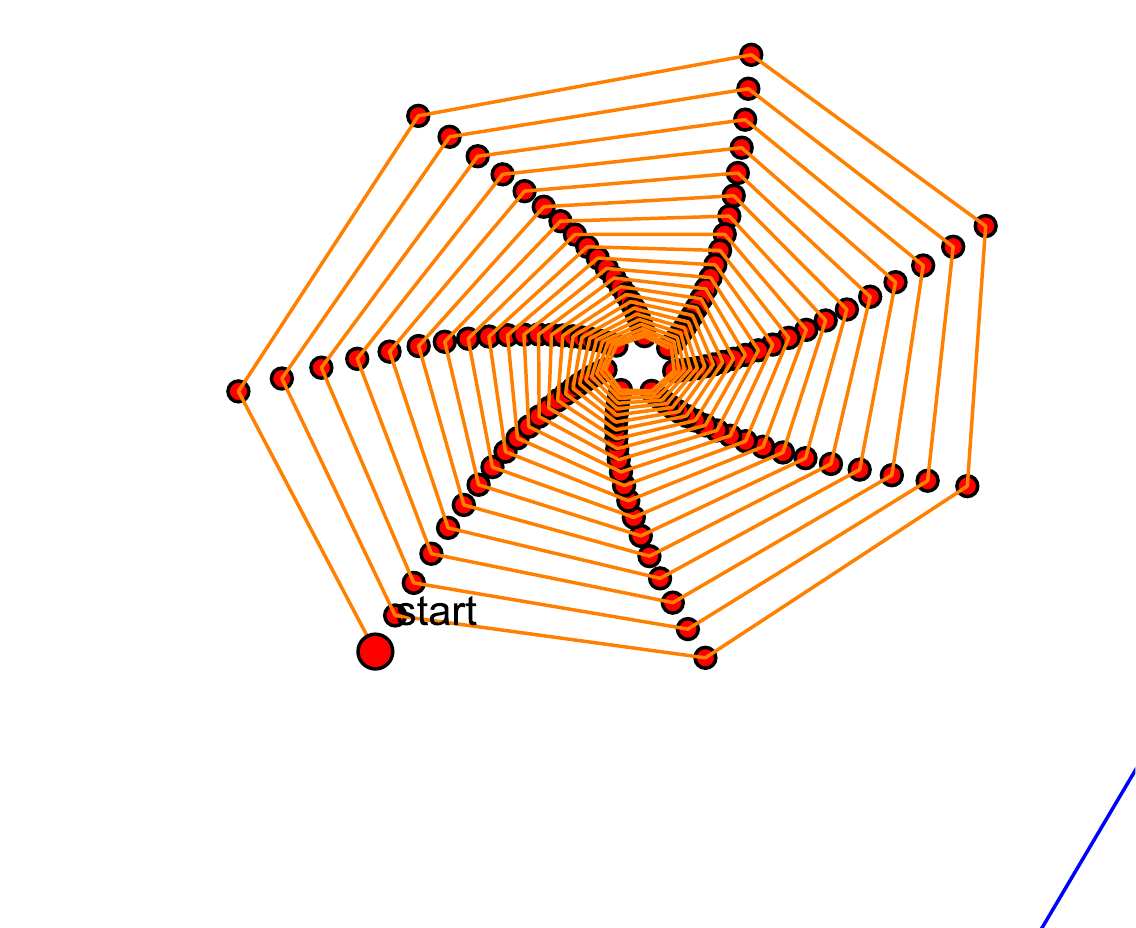} &
				\includegraphics[width=0.16\textwidth]{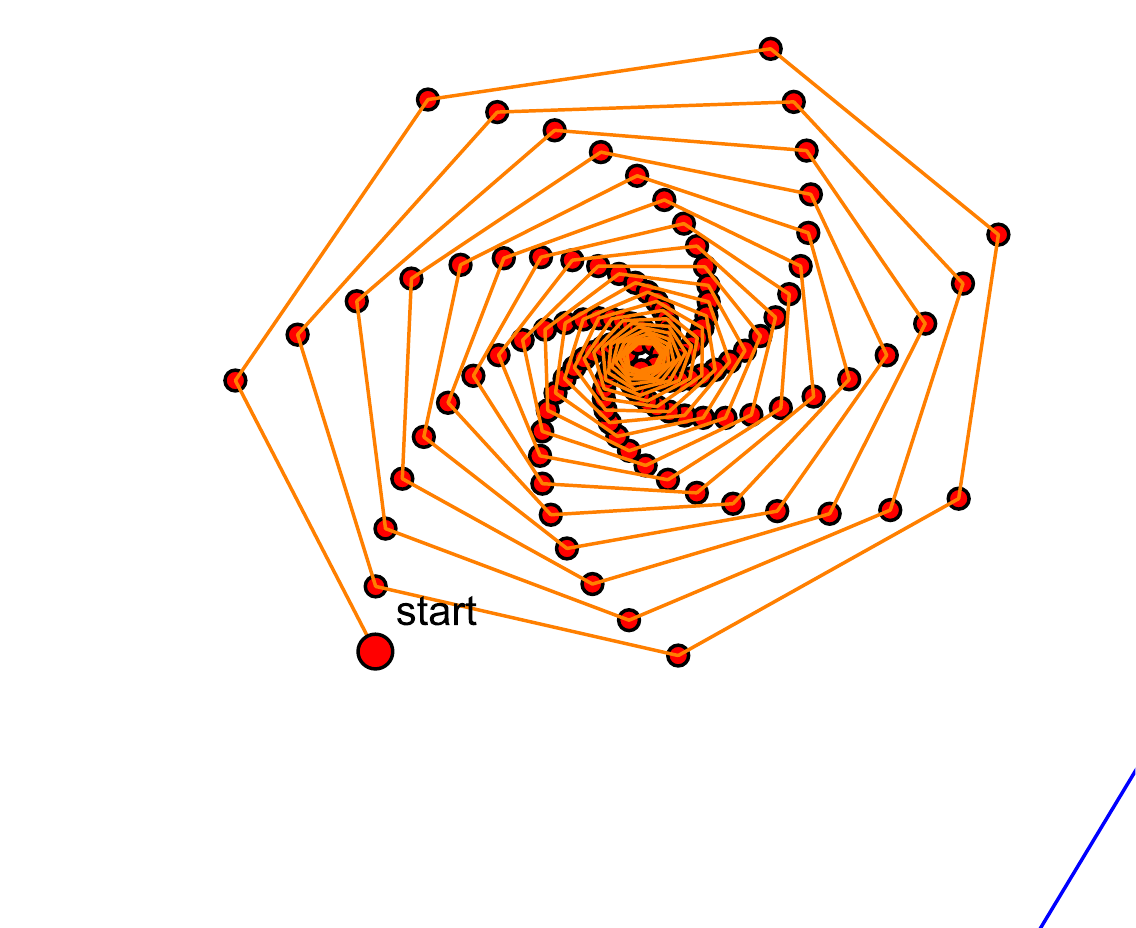} &
				\includegraphics[width=0.16\textwidth]{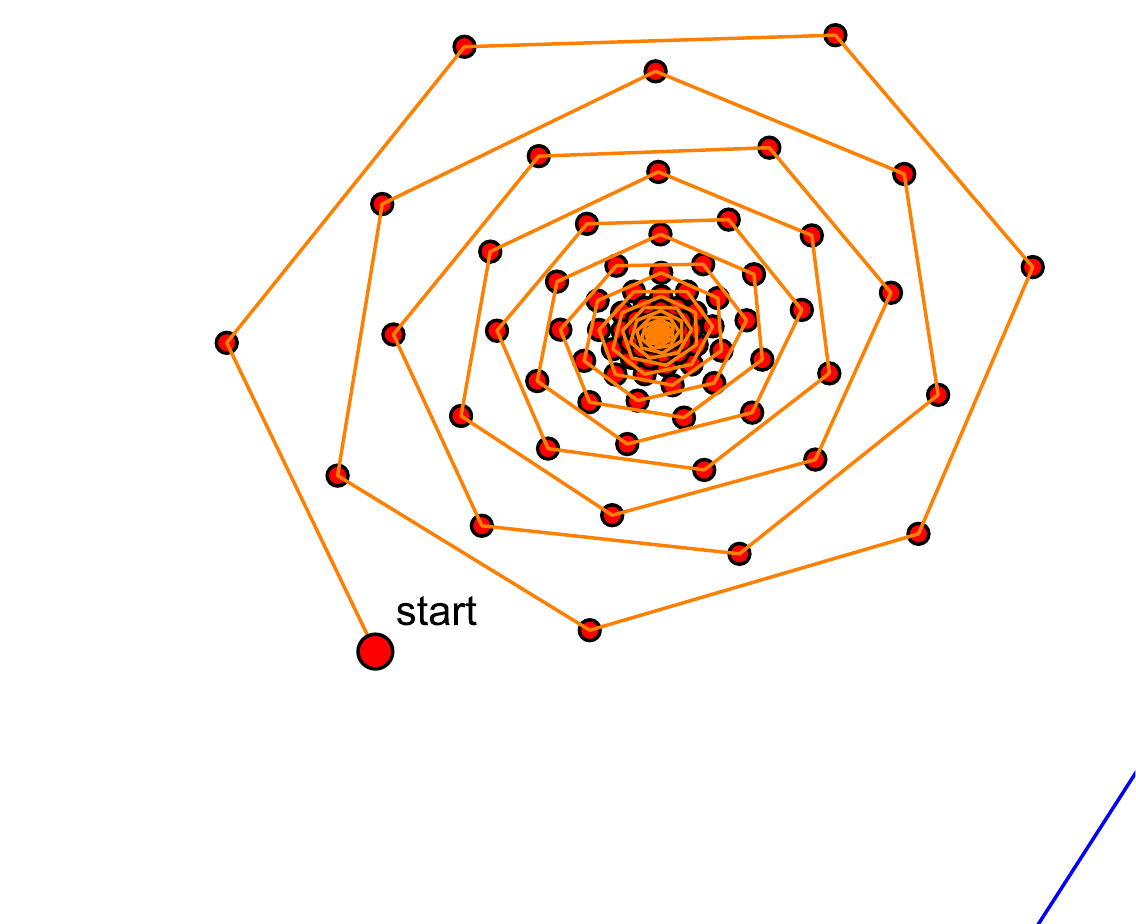} &
				\includegraphics[width=0.16\textwidth]{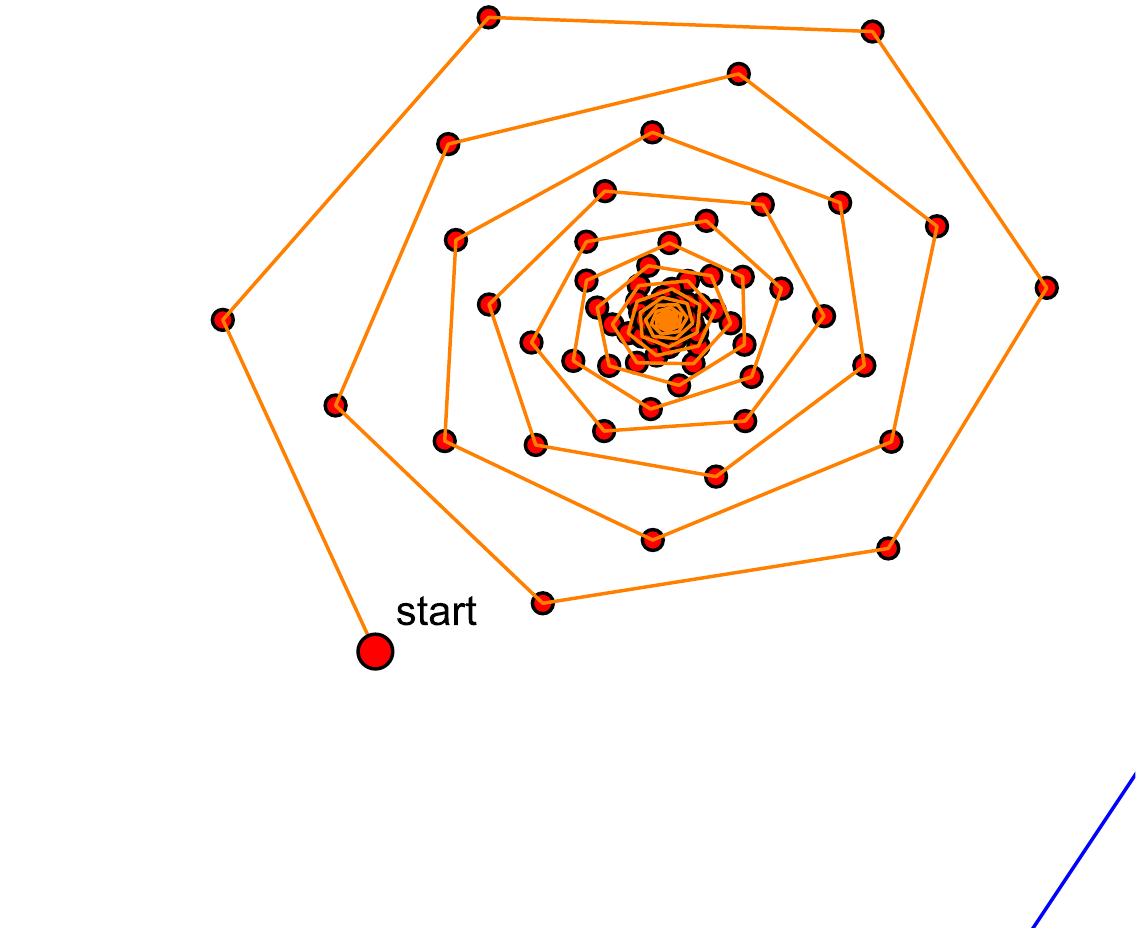}
			\end{tabular}	
			\caption{Evolution in behaviors near two period 2 points as the line slope is changed}
			\label{fig:twist}
		\end{center}
	\end{figure}
	
	This sensitivity to perturbations can be seen in Figures~\ref{fig:SourcesandSinks} and~\ref{fig:twist}. In the former we have connected every second iterate, and see how a small change in slope can affect which feasible point is converged to, as well as the appearance/disappearance of attractive domains. In the latter we plot every second iterate for $T_{E_2,L_m}$ with $300$ iterates, starting at $m=2$ (top left) we slowly rotate the line until we have $m={3/2}$ (bottom right). Part of the line is visible in the bottom right corner of each frame. In the initial configuration, the subsequence of iterates started near to the periodic point are repelled from it. As we rotate the line, we see that the ``speed'' at which they are repelled decreases  until eventually the periodic point becomes an attractive point instead of a repelling point.
	
	\subsection{Studying Convergence: Numerical Motivations}\label{subsec:Numerical}
	
	The complicated nature of the singular set precludes any possibility of constructing a Lyapunov function in any sizable region about the feasible point. Indeed, attempts to even numerically construct the level curves such a function might have near a feasible point proved unstable. Instead we refine our numerical-graphical method of discovery. The method we used for Figure~\ref{fig:BasinsAll}, though useful for discovery, is not, in itself, sufficient for fully understanding the behaviors, even for one specific ellipse and line. There are several reasons for this.
		
	\begin{enumerate}
		\item There may be other periodic points we cannot see because they are repelling or their attractive domains are too small.
		\item The potential for numerical error is accentuated by the fact that the projection onto the ellipse is specified as the root function---induced by the Lagrangian system---whose calculation is, in some configurations, complicated by the presence of nearby incorrect roots.
		\item This method of visualization may be deceptive, as it precludes us from seeing accurately the extent and shape of attractive domains.
	\end{enumerate}
	As an example of the latter, notice how the patterns of the iterates in Figure~\ref{fig:BasinsAll} form orderly spirals. This lovely pattern seems to hold for all the cases we have looked at. If we zoom in on the spirals we see what look like twisting galaxies (see Figure~\ref{fig:BasinSpirals}). Intuition would suggest to us---incorrectly---that this is perhaps indicative of smooth boundaries for the domains.
	\begin{figure}
		\begin{center}
			\setlength\tabcolsep{0.01\textwidth} 
			\begin{tabular}{cccc}
				\includegraphics[width=0.16\textwidth]{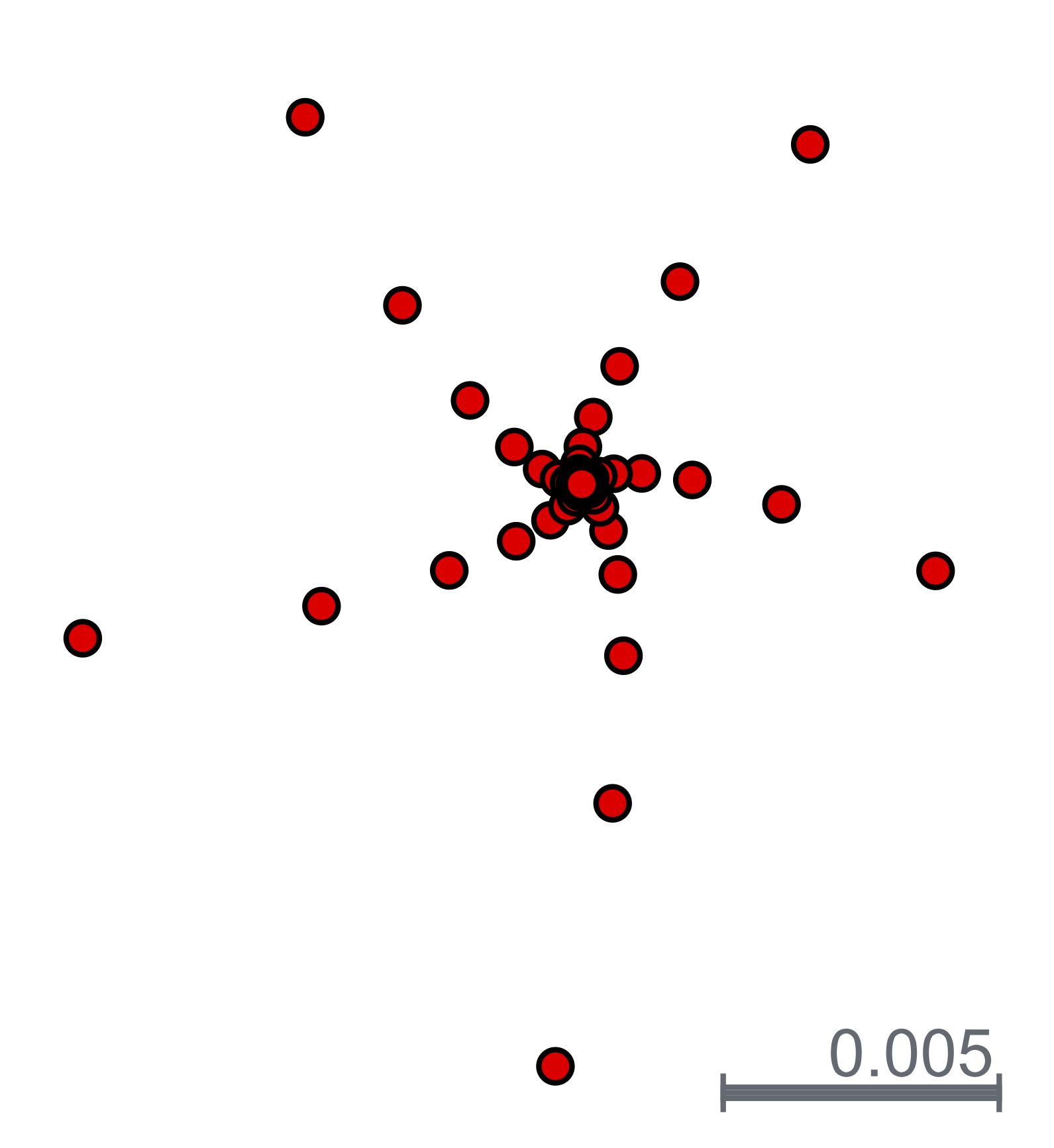} &
				\includegraphics[width=0.16\textwidth]{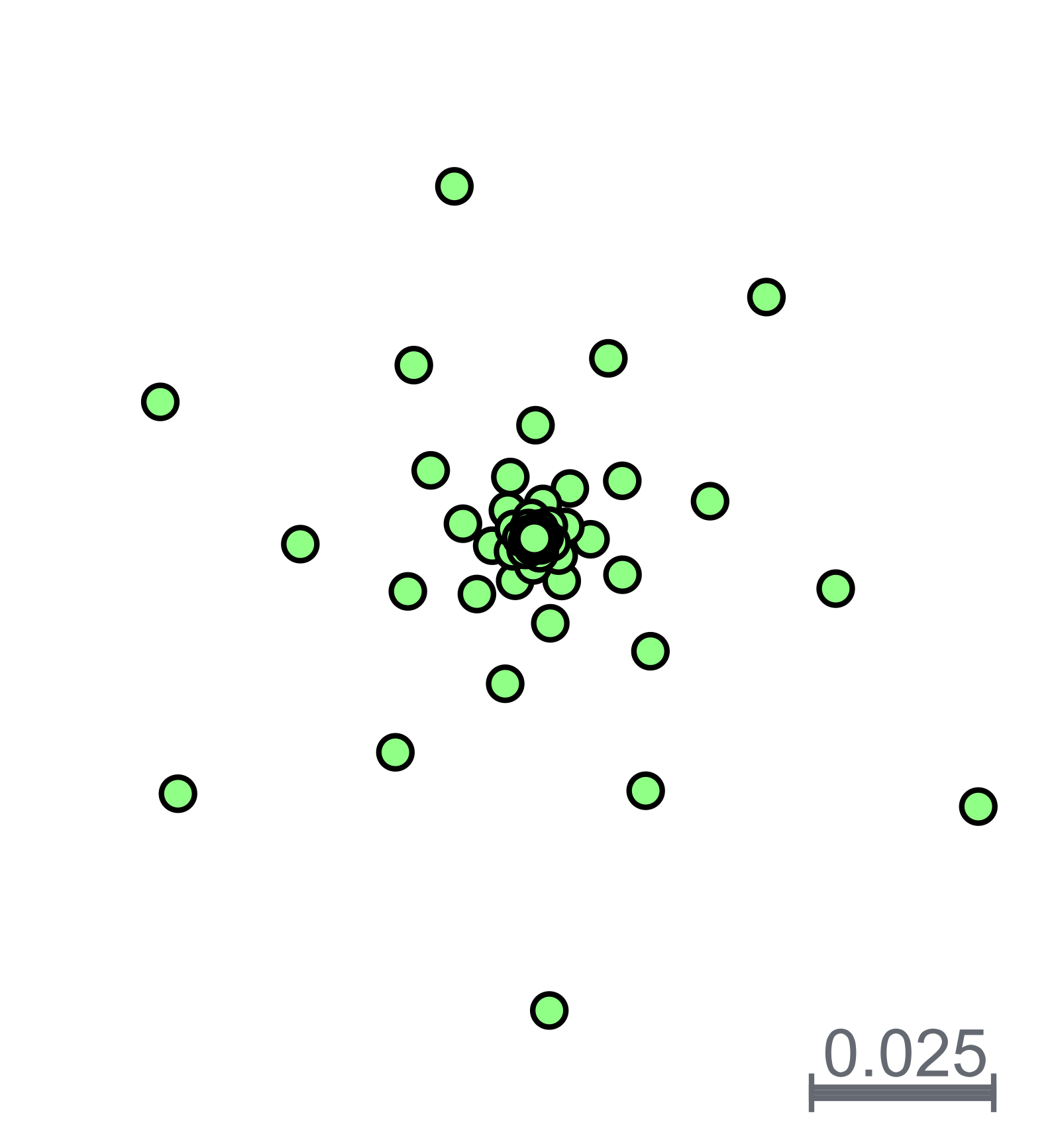} &
				\includegraphics[width=0.16\textwidth]{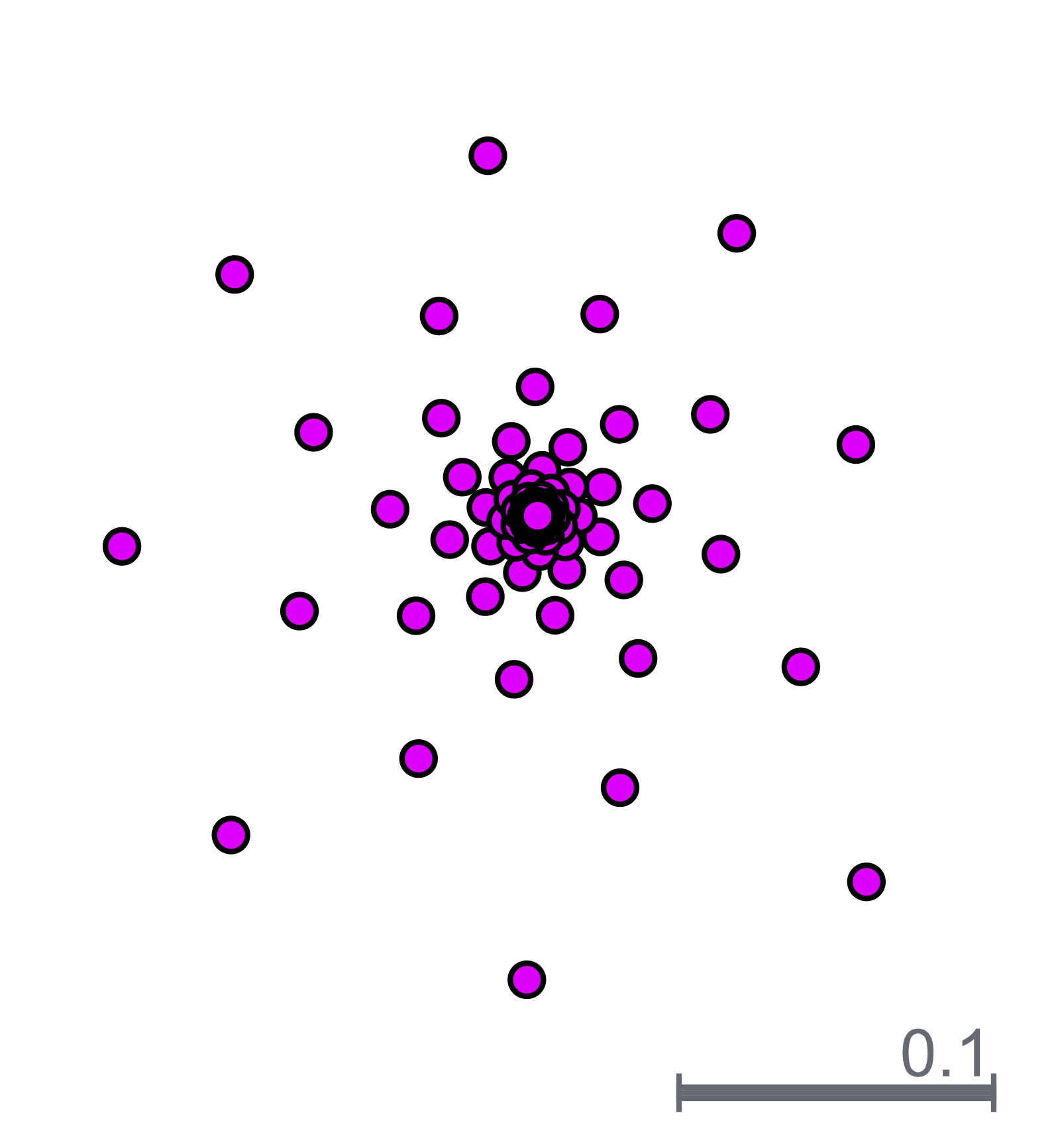} &
				\includegraphics[width=0.16\textwidth]{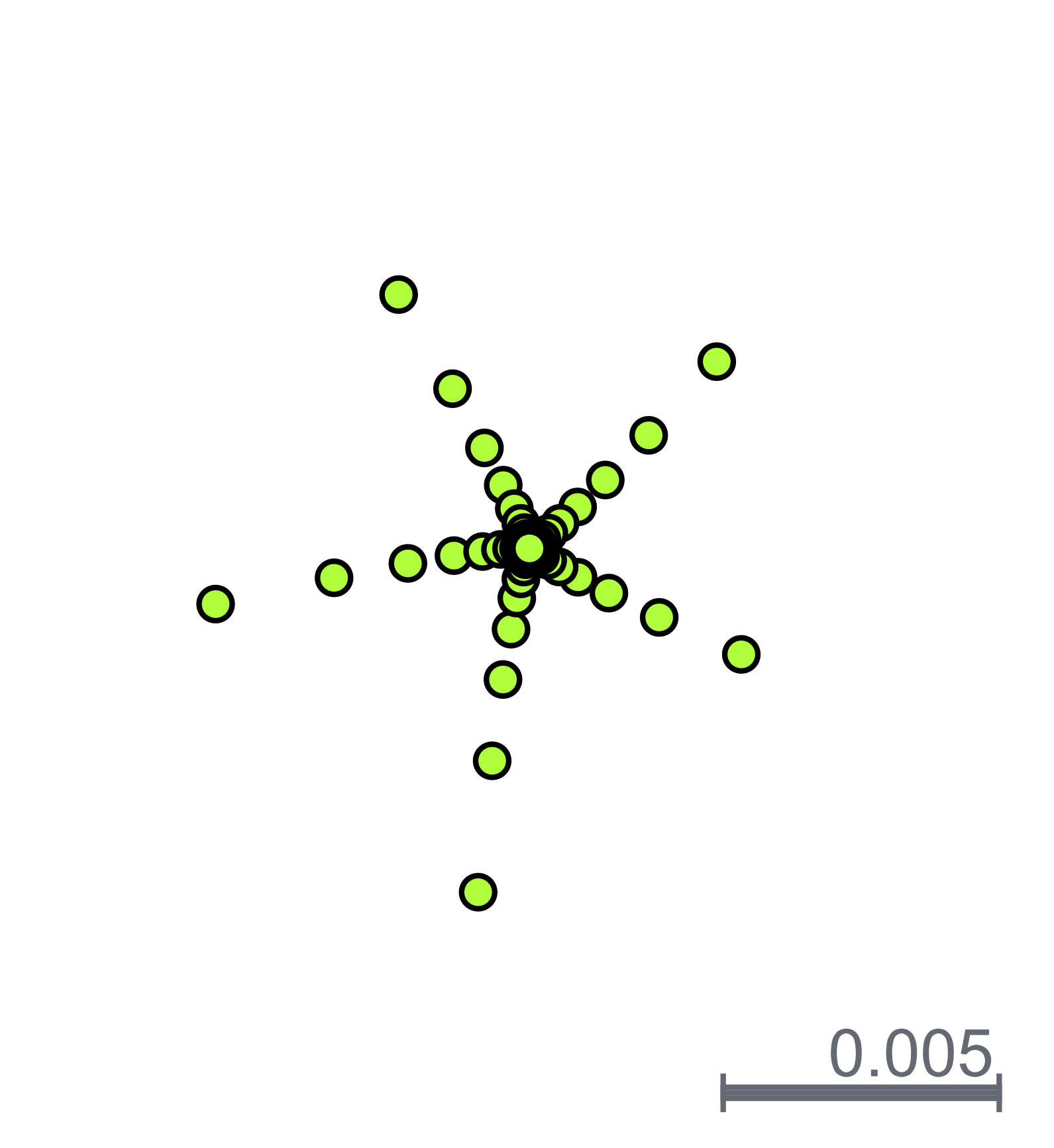} \\
				\includegraphics[width=0.16\textwidth]{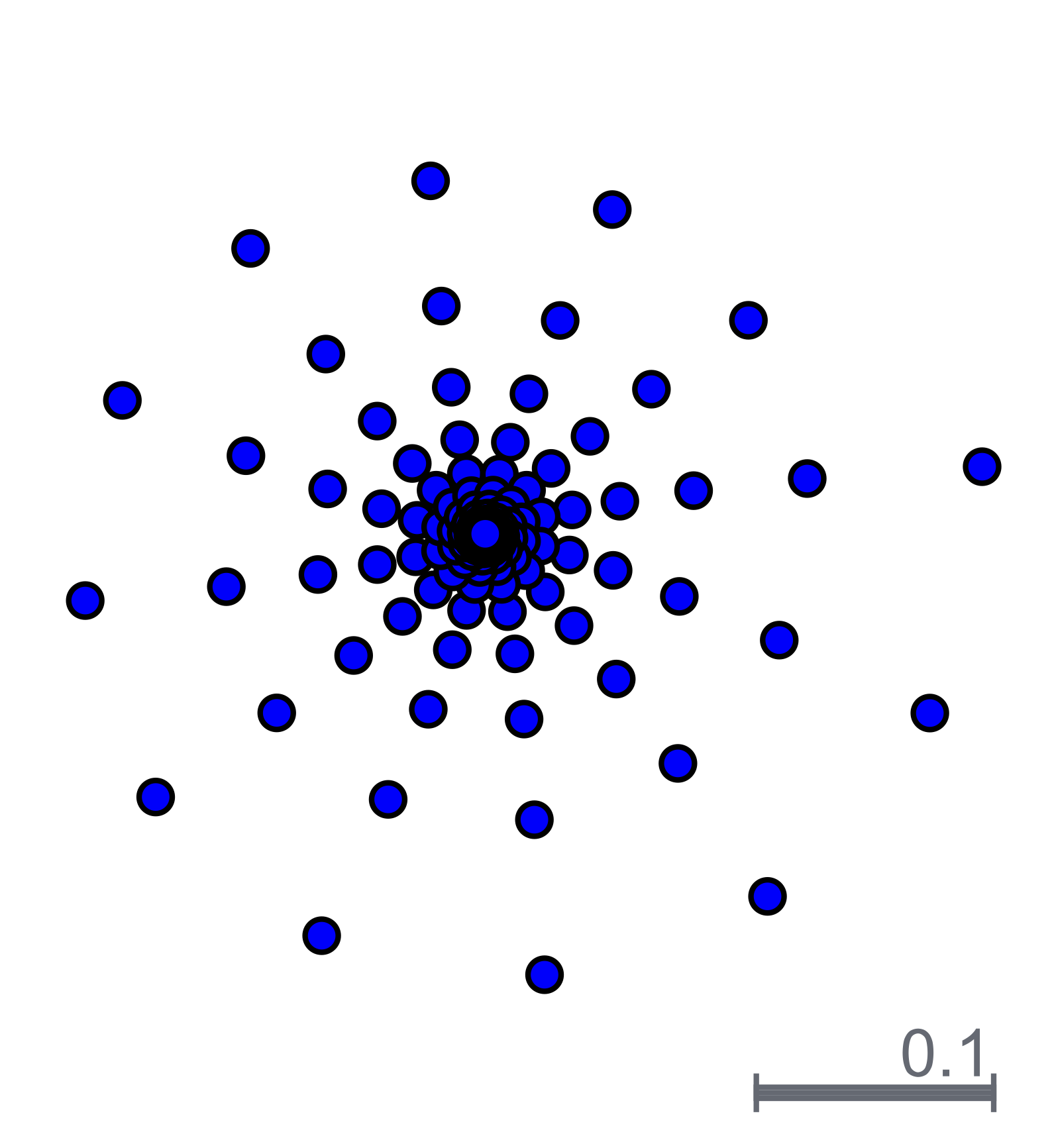} &
				\includegraphics[width=0.16\textwidth]{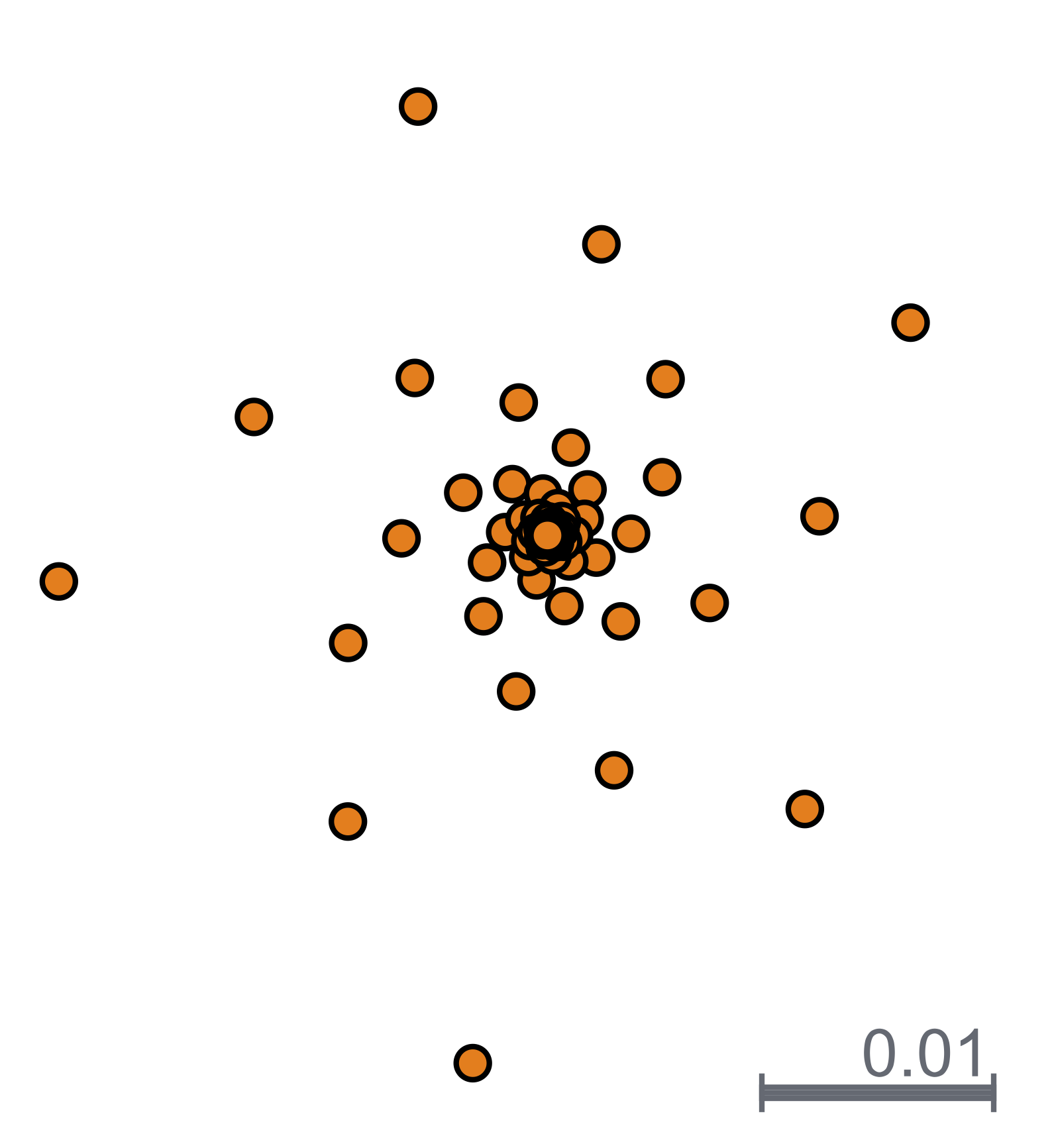} &
				\includegraphics[width=0.16\textwidth]{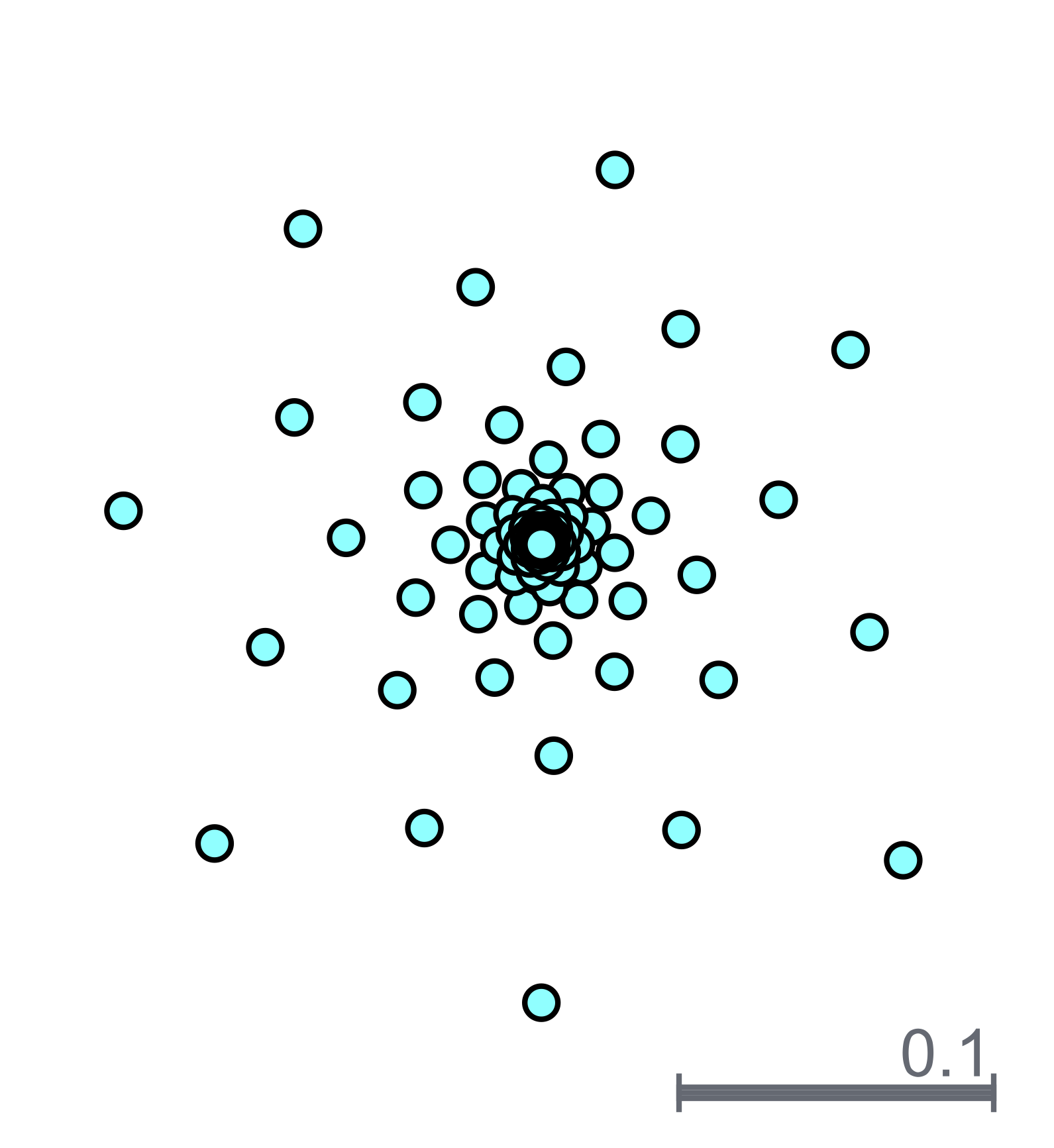} &
				\includegraphics[width=0.16\textwidth]{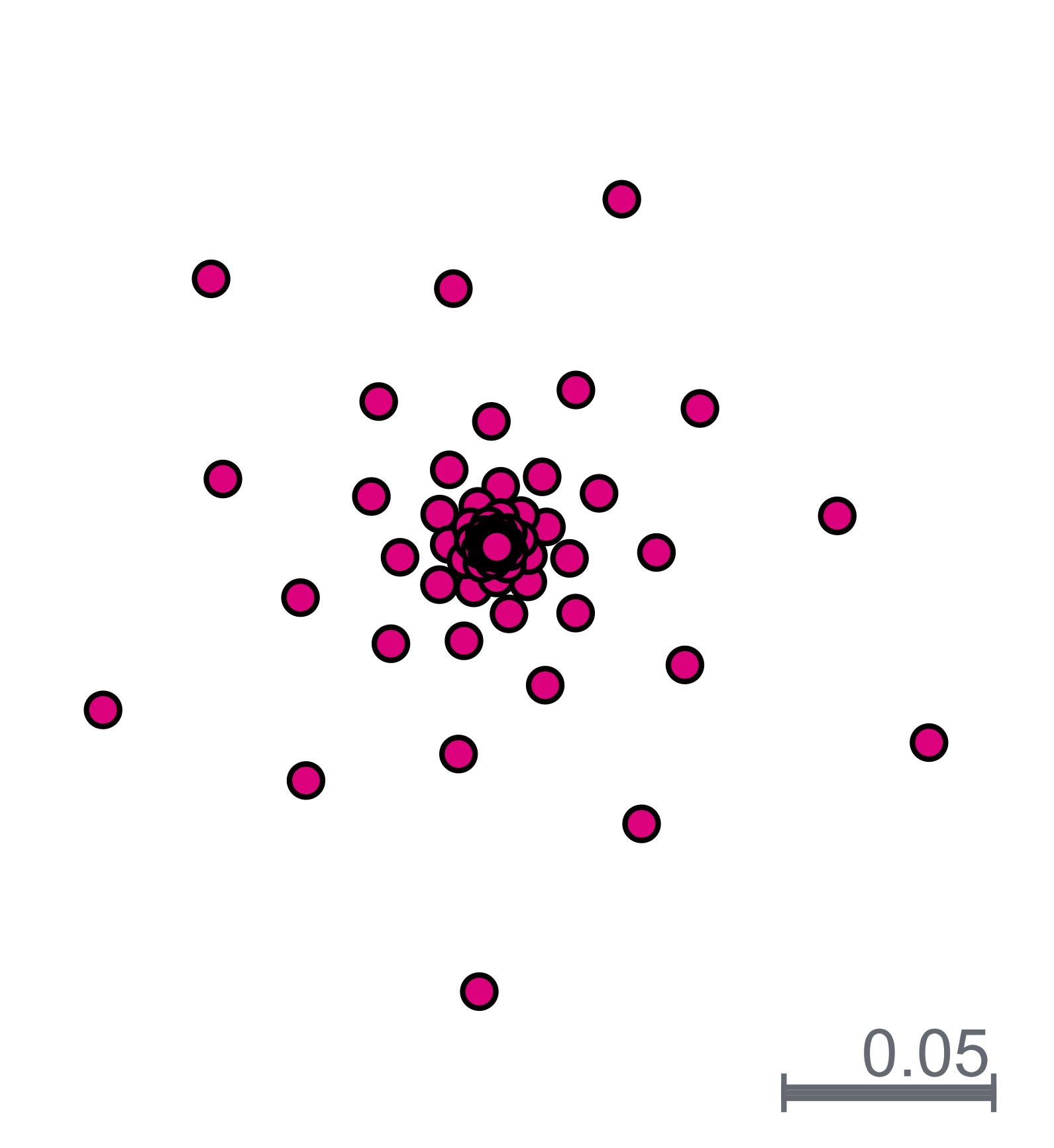} \\
				\includegraphics[width=0.16\textwidth]{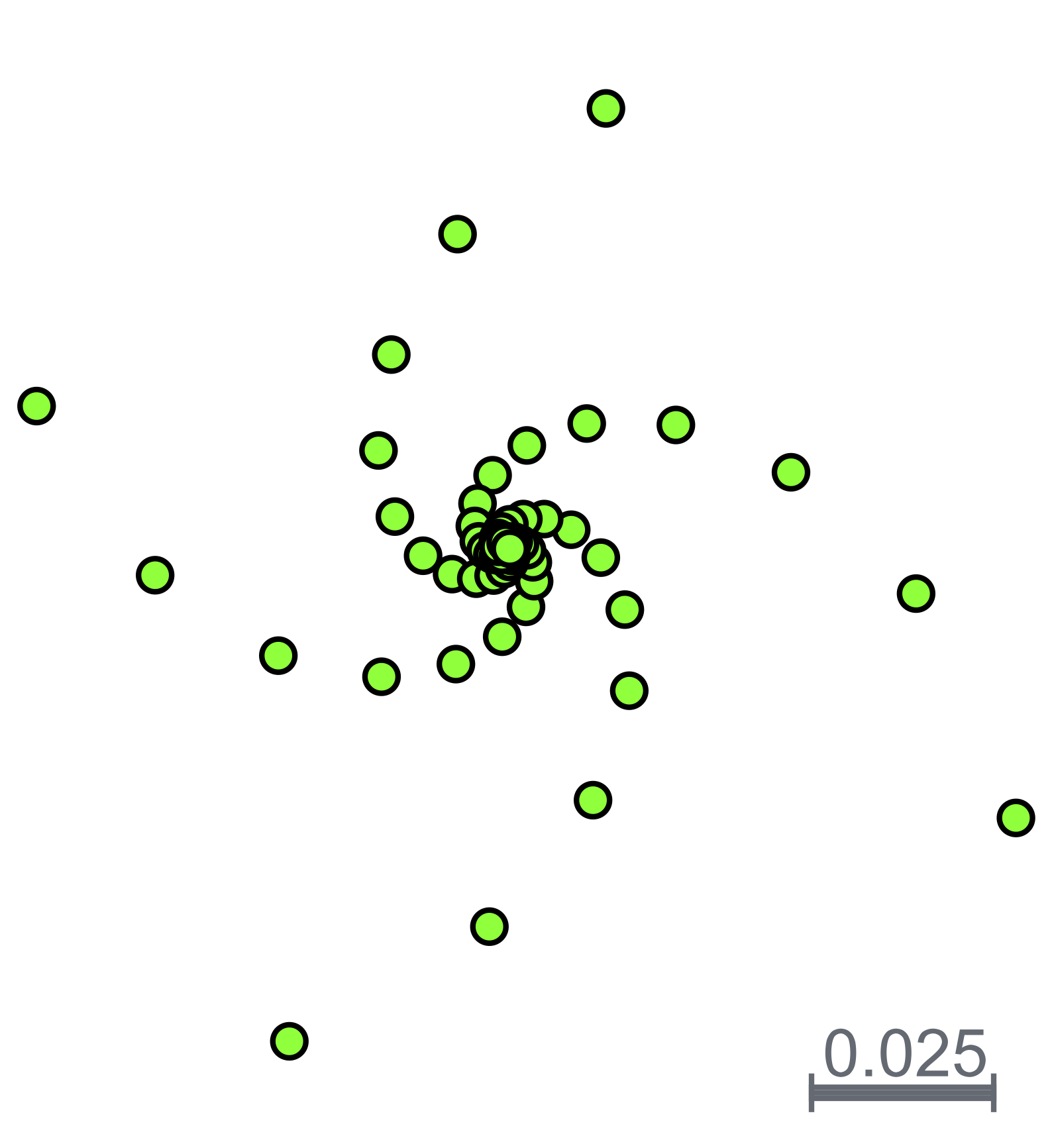} &
				\includegraphics[width=0.16\textwidth]{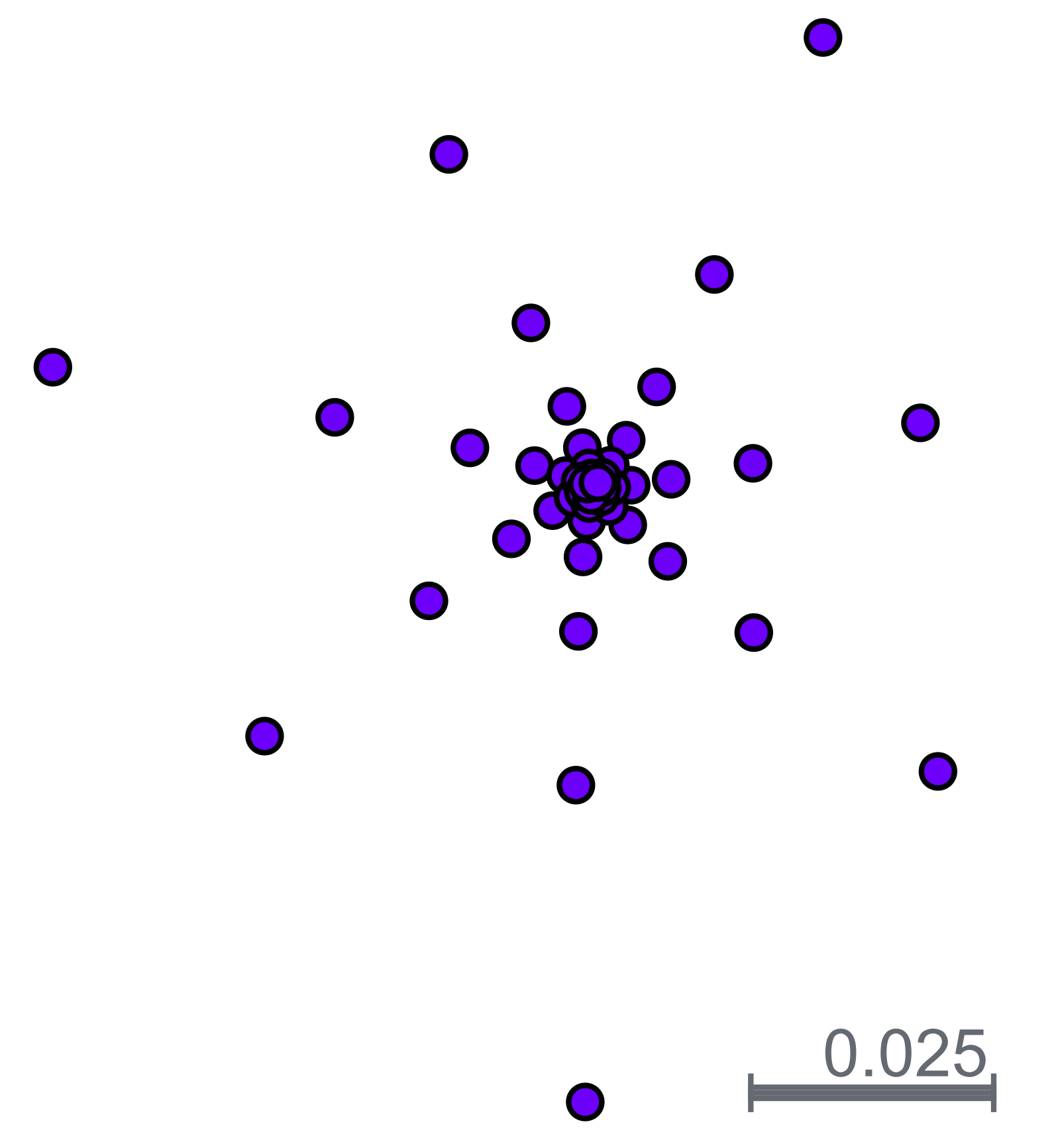} &
				\includegraphics[width=0.16\textwidth]{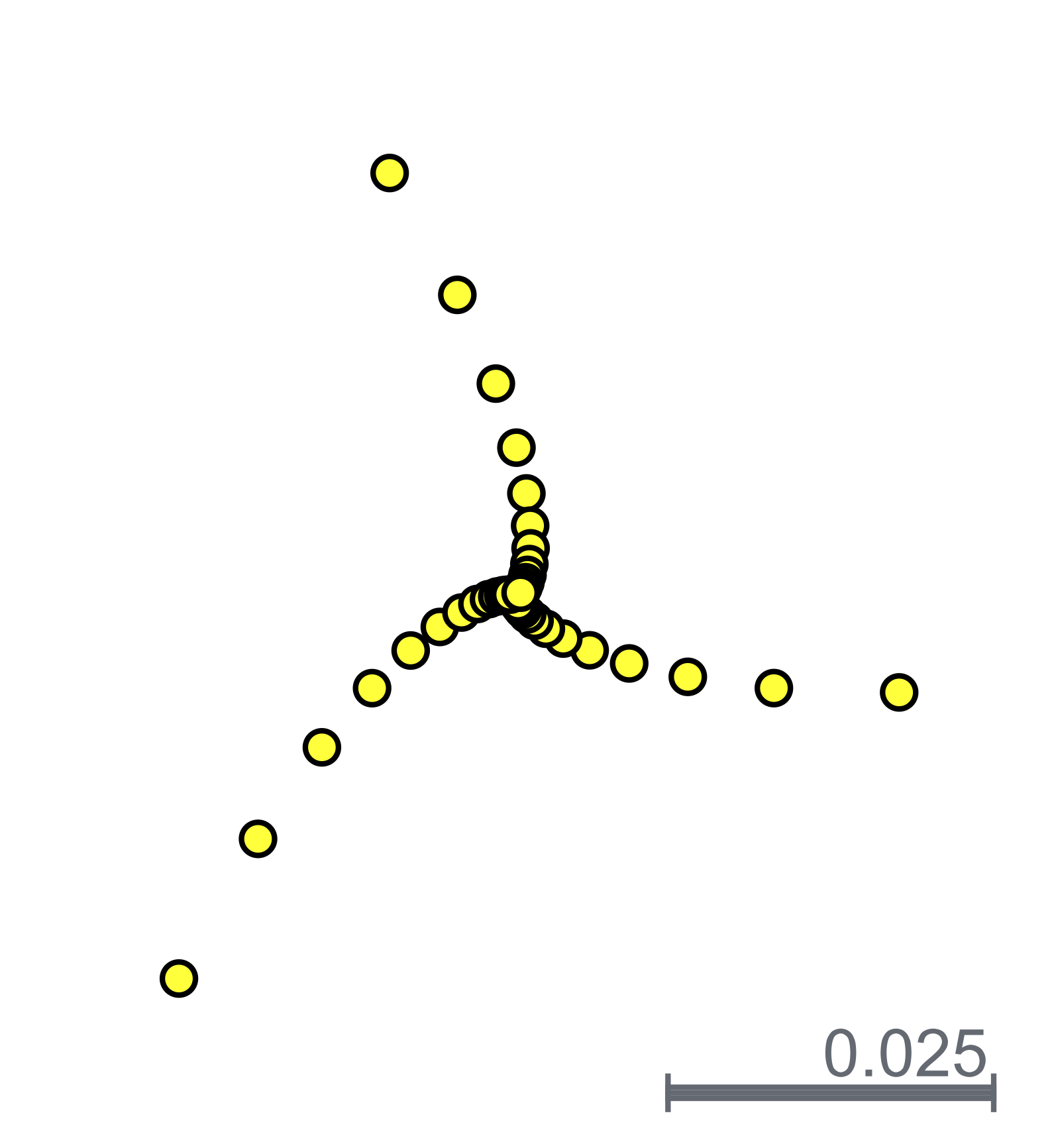} &
				\includegraphics[width=0.16\textwidth]{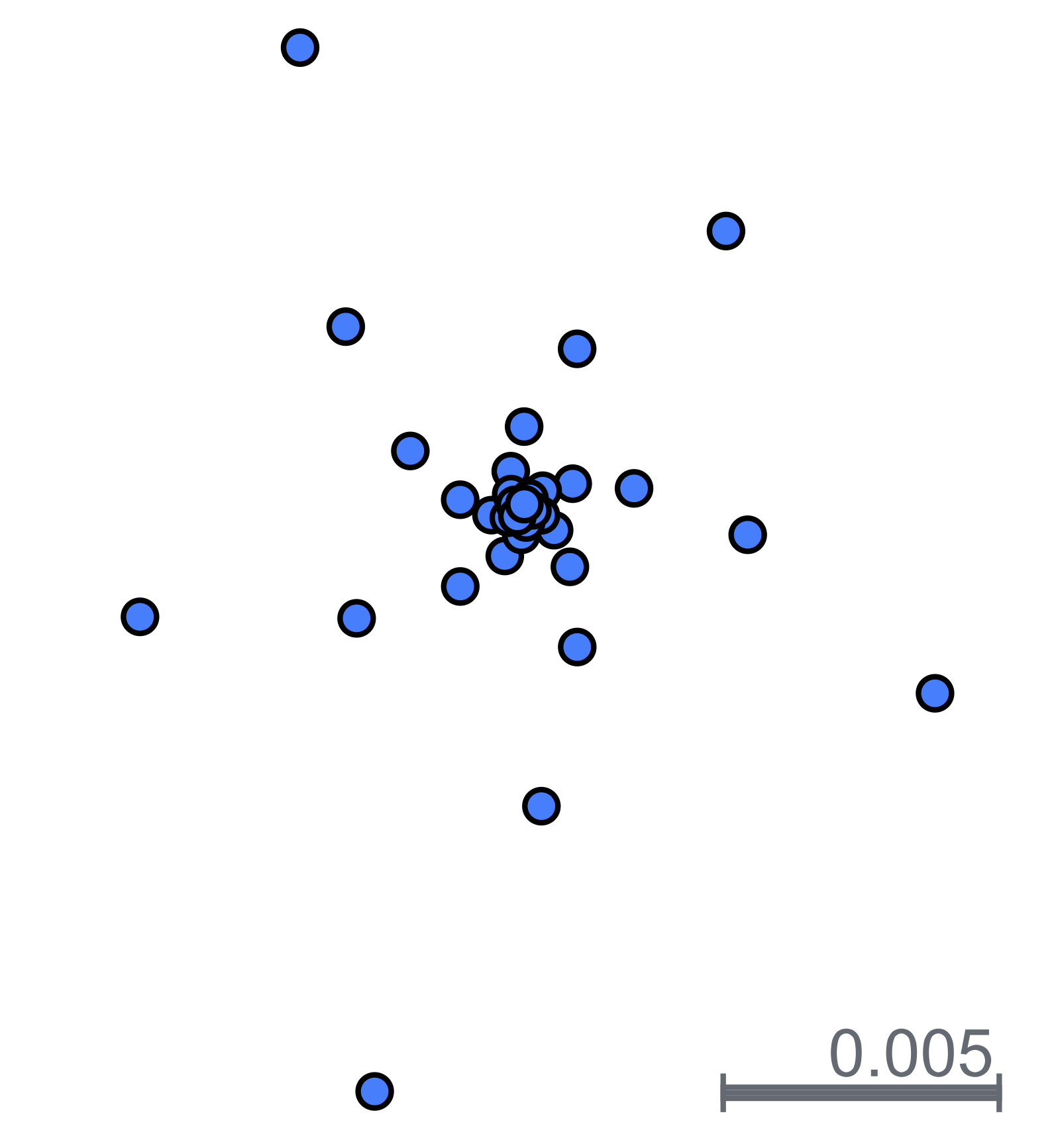}
			\end{tabular}		
			\caption{Close up view of the spirals in seen in Figure~\ref{fig:BasinsAll}}
			\label{fig:BasinSpirals}
		\end{center}
	\end{figure}
		
	\subsection{Visualization through Parallelization}
	
	Seeking clearer pictures with finer resolution, we implemented a new version of our code. In this new version we specify a resolution and for each pixel compute we compute the midpoint, calling it $x_0$. Once computed, the location of $x_{1,000}$ is checked against a list containing the feasible points and approximate periodic points, and the pixel is colored according to which list member it is nearest to. 
	
	The efficacy of this technique is demonstrated in Figures~\ref{fig:sourcebasinspirals} and~\ref{fig:parellelzoomed}. The former clearly depicts the complex structure of the domains of attraction for the two feasible points in the case of $T_{L_2,E_2}$ where two period 2 repelling points are present. Note the interweaving of the attractive domains near the repelling points. The latter shows the domains of attraction for $T_{E_8,L_6}$ in the same region as shown in Figure~\ref{fig:BasinsAll}.
	
	\begin{figure}
		\begin{center}
			\fbox{\includegraphics[angle=90,width=0.7\textwidth]{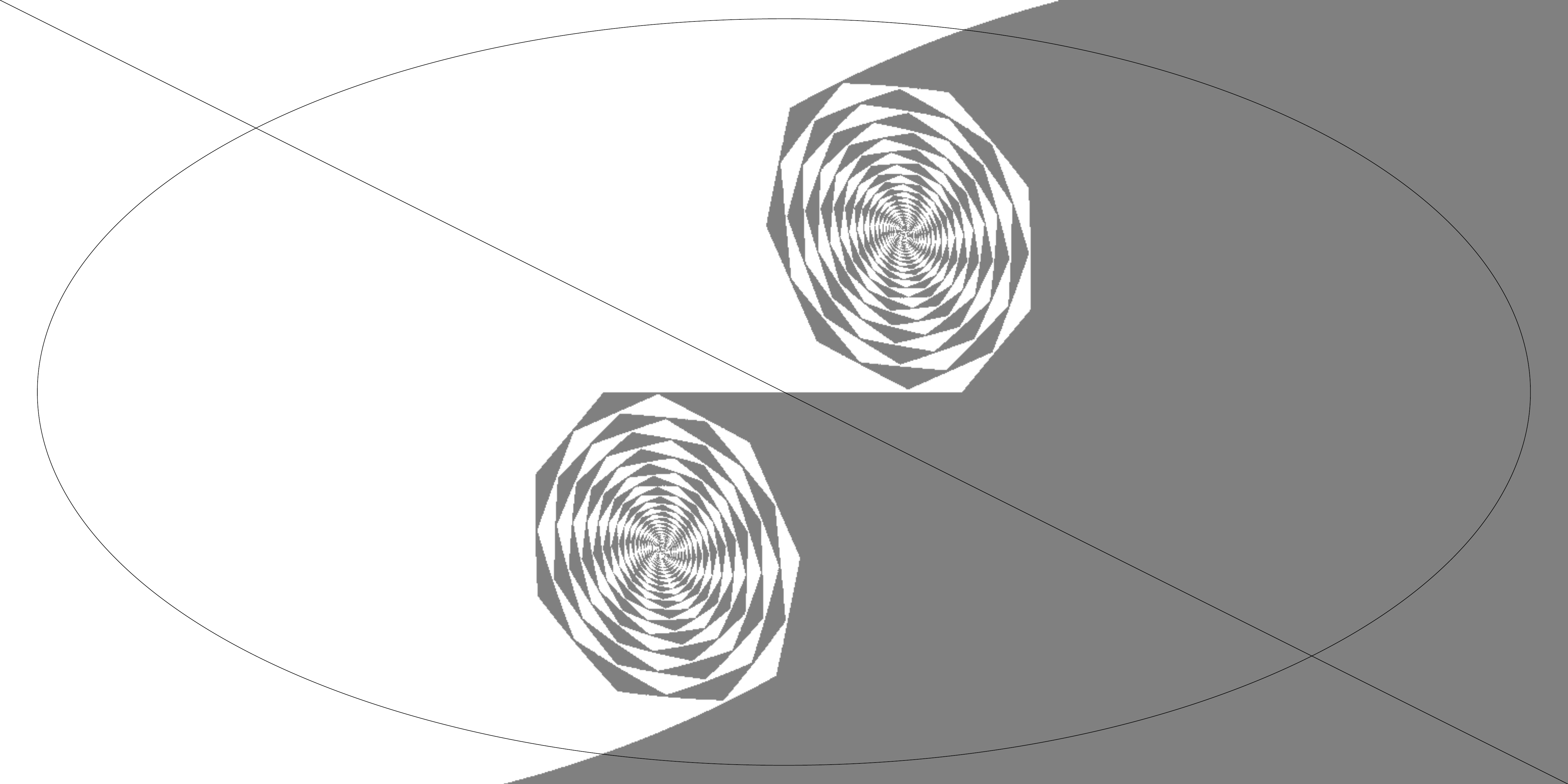}}
			\caption{Domains of attraction for the two feasible points of $T_{E_2,L_2}$}
			\label{fig:sourcebasinspirals}
		\end{center}
	\end{figure}
	
	\begin{figure}
		\begin{center}
			\fbox{\includegraphics[angle=90,width=0.7\textwidth]{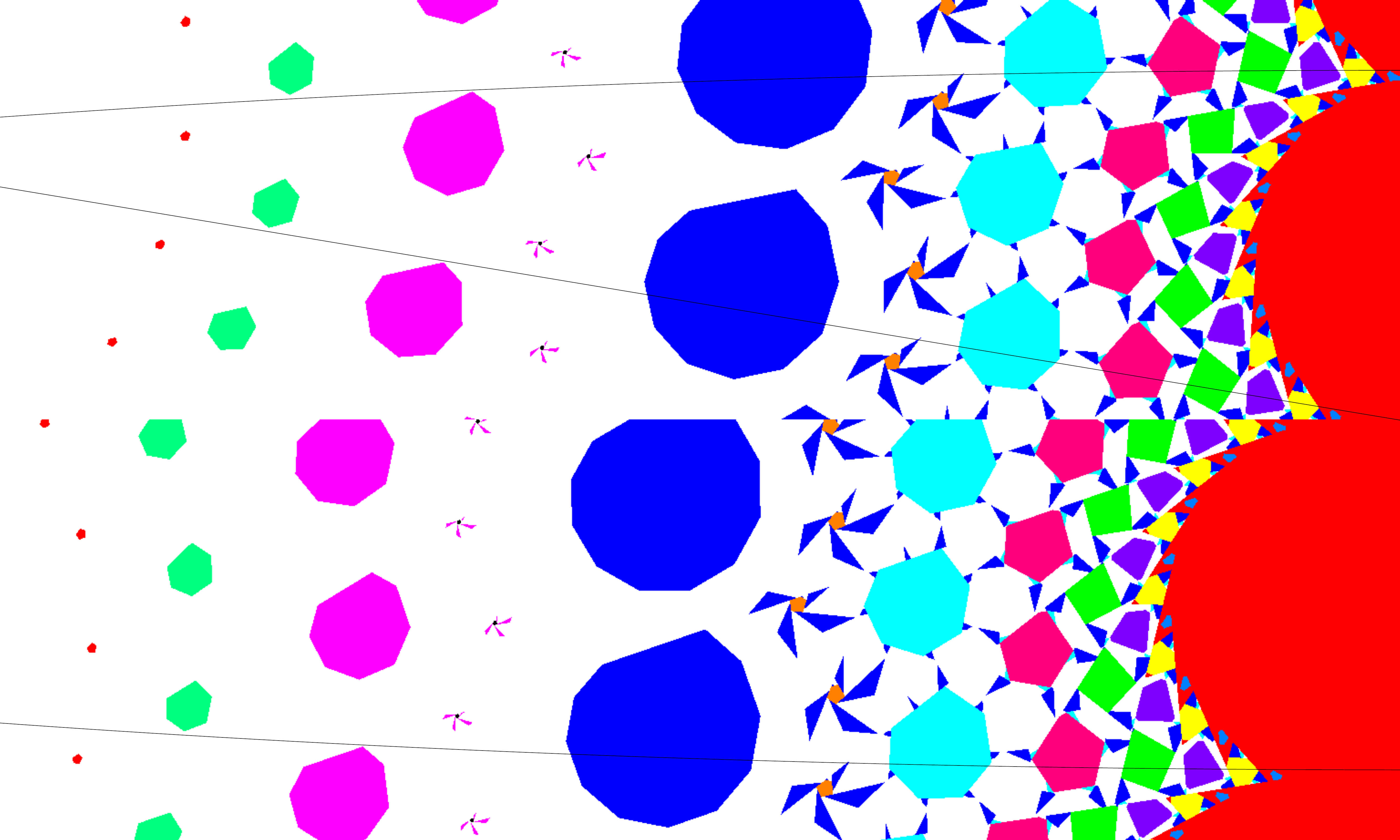}}
			\caption{Domains of attraction for $T_{E_8,L_6}$. Compare with Figure~\ref{fig:BasinsAll}}
			\label{fig:parellelzoomed}
		\end{center}
	\end{figure}
	
	This new implementation was written in such a way as to leverage the highly parallel nature of \textsc{gpu} devices, although it may also be run on regular \textsc{cpu}s. This allowed us to compute the colorings for many pixels simultaneously, reducing the time needed to produce the images and simultaneously affording us the ability to produce images with a much finer resolution. With this method we were able to see the behavior of the system over a larger area, as shown in Figure~\ref{fig:FullEllipseBasinsFullPage}. Note that these images benefit greatly from color coding of the domains. Color versions of the above figures and other images we produced can be found in the appendix \cite{APPENDIX}.
	
	\begin{figure}
		\begin{center}
			\fbox{\includegraphics[angle=90,width=0.7\textwidth]{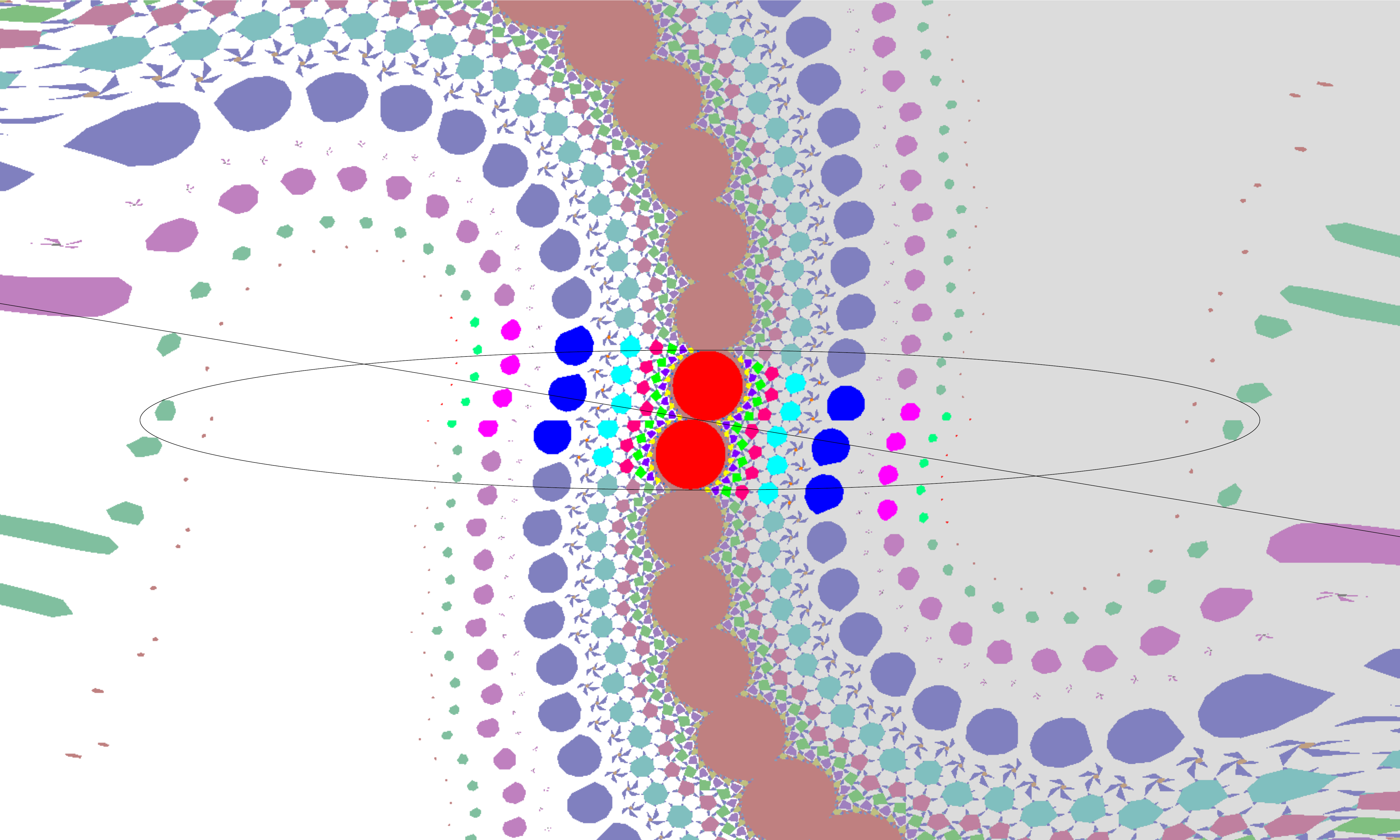}}
			\caption{Domains of attraction for $T_{E_8,L_6}$}
			\label{fig:FullEllipseBasinsFullPage}
		\end{center}
	\end{figure}
	
	\section{Line and $p$-sphere}
	
	Projections onto the 1-sphere can be determined explicitly, so exact analysis is possible. Consequently, much of the behavior is readily determined in particular periodic points with periods higher than 2 are observed. When $p=2$, we recover the circle for which the convergence properties are known, see~\ref{sec:intro}.  Our observations from the case $p>2$ suggest a conjecture: that when the line is not parallel to either of the axes there is at most one pair of periodic points and they are repelling.
	
	\begin{figure}
		\begin{center}
			\includegraphics[width=0.17\textwidth]{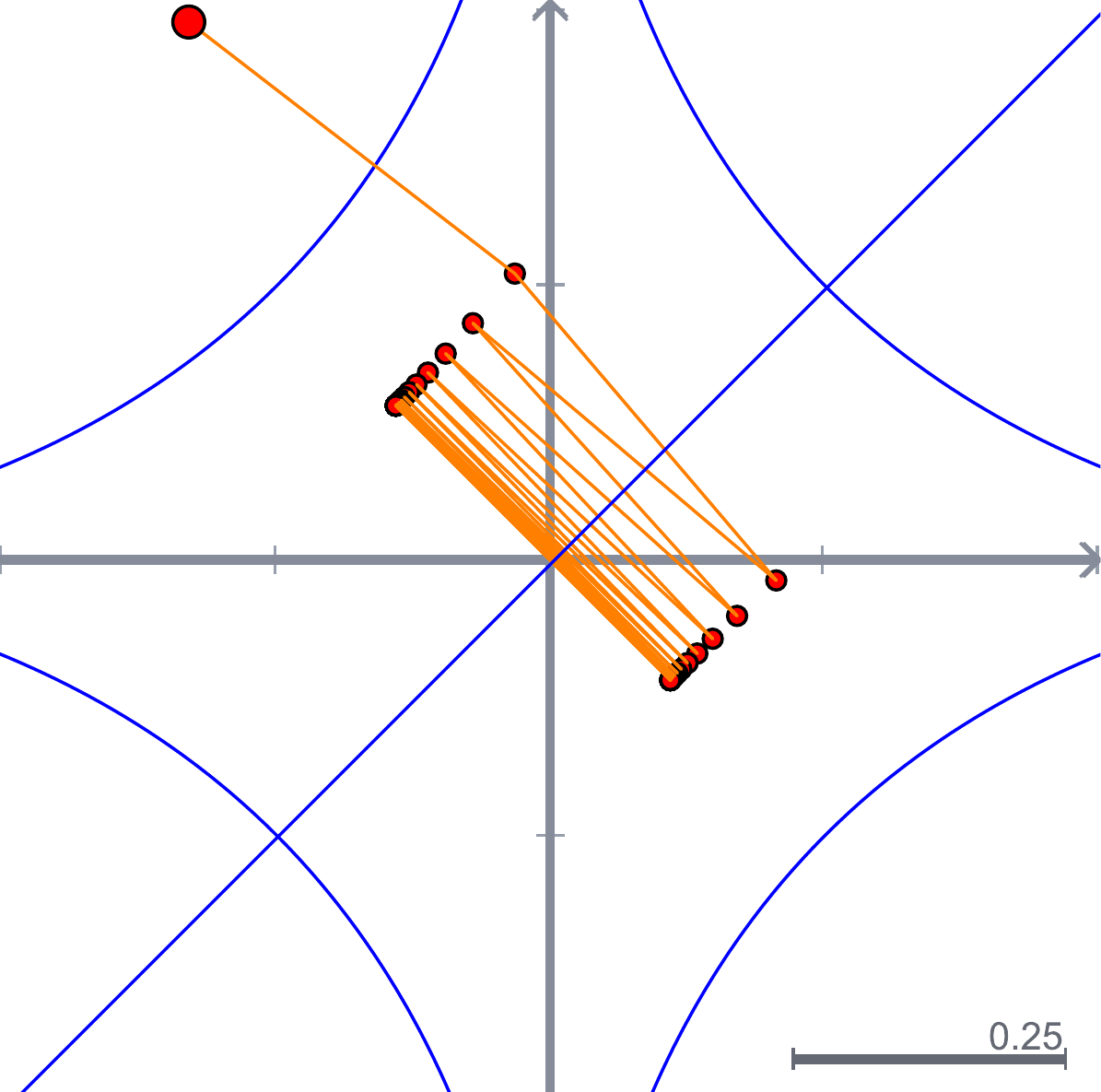}
			\hspace{0.02\textwidth}
			\includegraphics[width=0.17\textwidth]{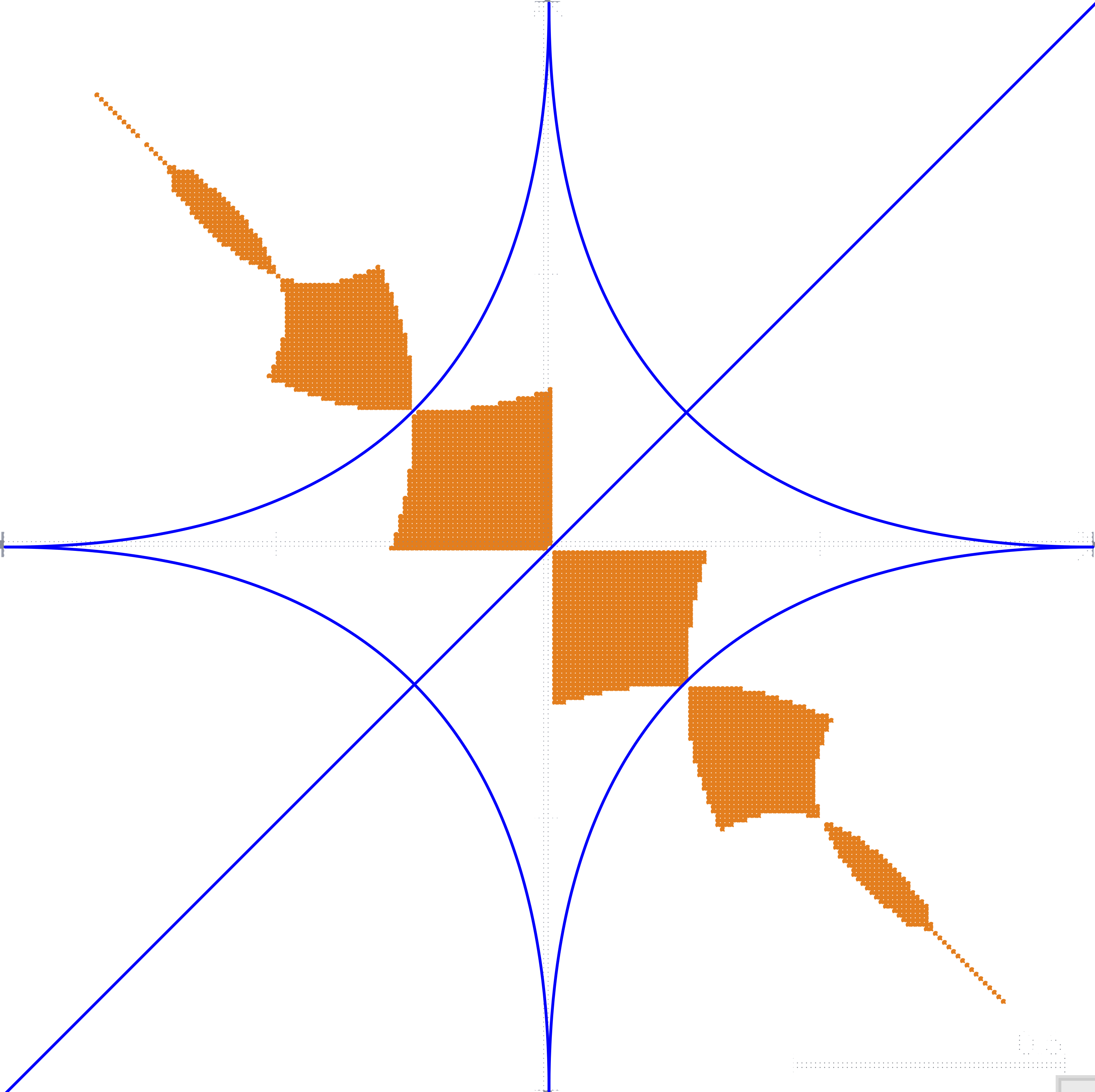}
			\hspace{0.02\textwidth}
			\includegraphics[width=0.17\textwidth]{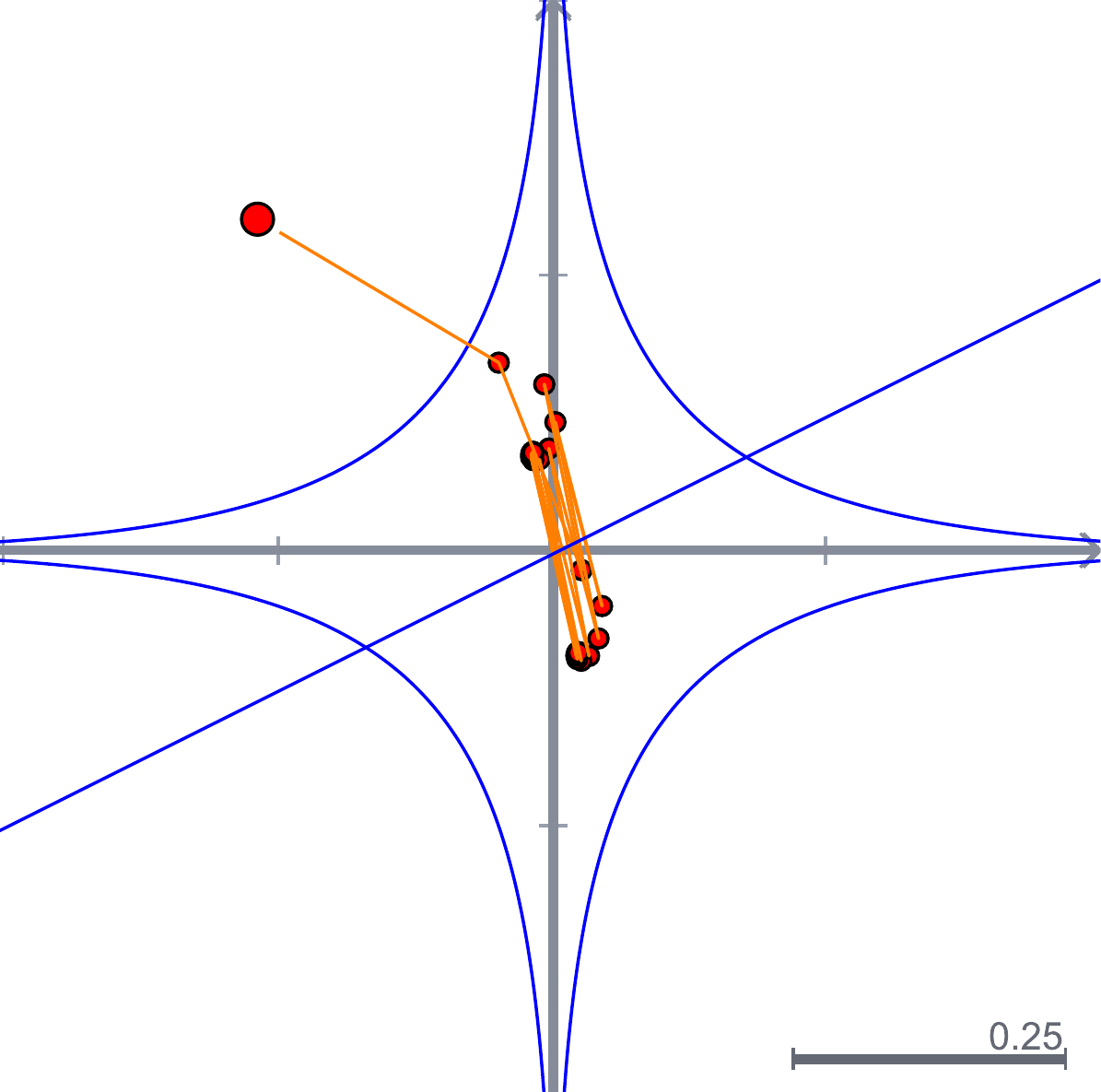}
			\hspace{0.02\textwidth}
			\includegraphics[width=0.17\textwidth]{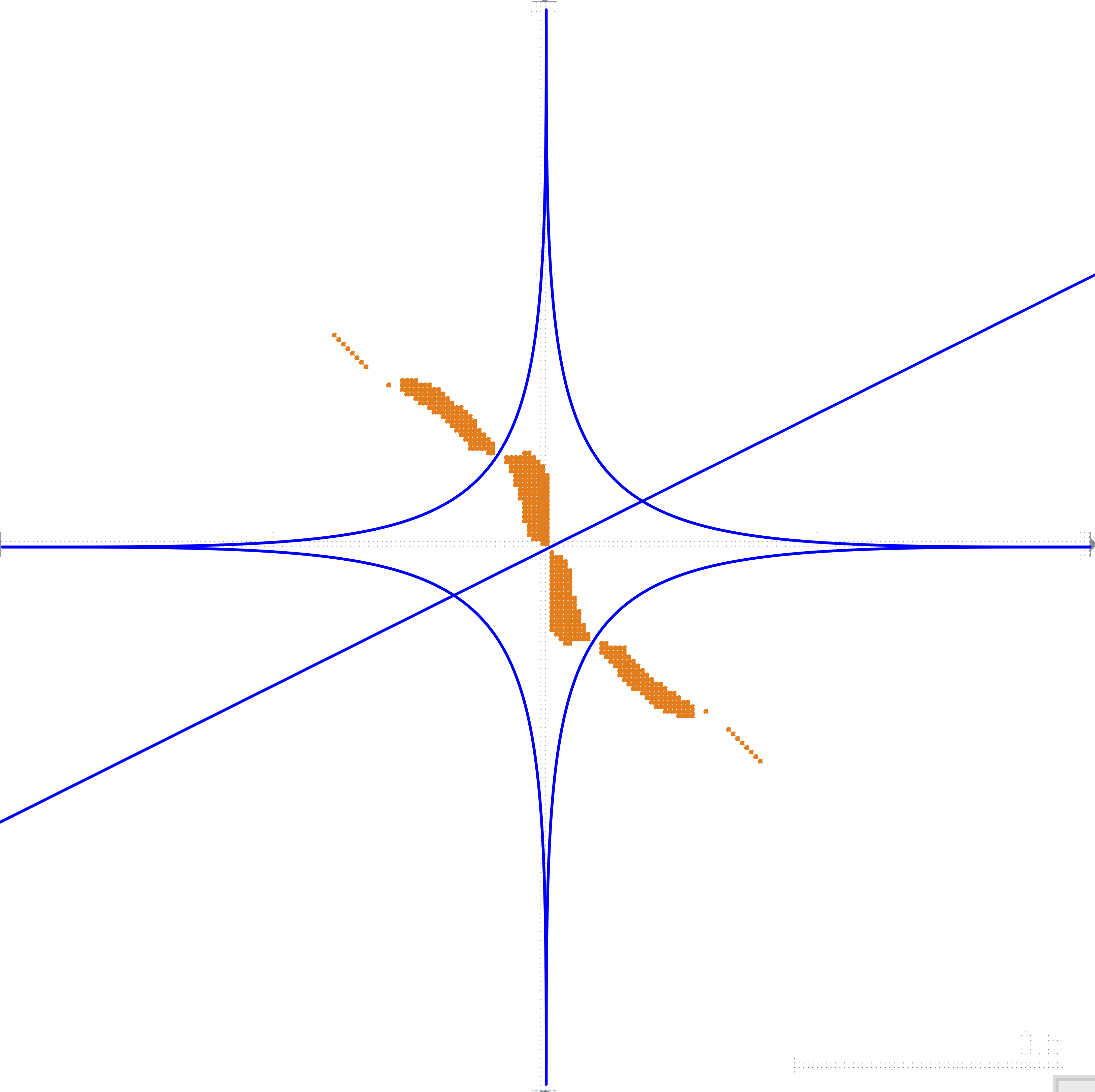}
			\caption{Subsequential convergence to---and attractive domains for---period 2 points. Left: $T_{S_{1/2},L_1}$, right: $T_{S_{1/3},L_{1/2}}$}
			\label{fig:halfandthirdsphere}
		\end{center}
	\end{figure}
	
	For $1/n$-spheres where $n\geq 2$ is a natural number, we see the appearance of period 2 points with attendant local domains of attraction, examples  are shown in Figure~\ref{fig:halfandthirdsphere}. It can be seen---proven easily---that, for the sphere $S_{1/n}$ with line $L_1$, any point $(-t,t)$ or $(t,-t)$ for $t\in (0,\frac{1}{2^n}]$ is a period 2 point. This continuum of period 2 points is analogous to what is observed in the case of a 2-sphere. More interesting is the apparent emergence of attractive domains which have nonzero measure.
	
	These observations already hint at the larger measure and greater complexity of the singular manifold in the case of a line and $p$-sphere, when $p\not=2$, compared to that for a line and 2-sphere. When we rotate the line to, say, $L_{1/2}$ there appear to be only finitely many period 2 points, but they are no longer constrained to lie in an affine submanifold.
	
	\section{A Theoretical Interlude: Local Convergence to a feasible point}
	
	Borwein and Sims \cite{BS} used the Perron theorem on the stability of almost linear difference equations \cite[Corollary 4.7.2]{Lak&DT} to establish local convergence of the Douglas-Rachford algorithm, $x_{n+1} = T_{K,L}(x_{n})$, to an isolated point $f\in L\cap K$ when $L$ is a line and $K$ is the (non convex) unit sphere in $n$-dimensional Euclidean space. We outline a strategy for extending this to the case when $L$ is still a line, but $K$ is a smooth hypersurface ($(n-1)$-manifold). We consider its application when $L$ and $K$ lie in $\mathbb{R}^{2}$; $L$ is the line $\alpha x + \beta y = \gamma$, $K$ is the ellipse $E_b$ as in \eqref{notations}.
	
	The strategy is to show that, in a neighborhoods of the feasible point $f$, the reflection in the supporting hyper-plane $H_{f}$ to $K$ at $f$---as in Figure~\ref{fig:tgtaproxtoellipse}---provides an $o$-order approximation to the reflection in $K$ so that the Perron theorem can be applied to the system of difference equations corresponding to the Douglas-Rachford algorithm. Succinctly, we want
	\begin{equation*}
	R_{K}(p) = R_{H_{f}}(p)\ +\ \Delta, \quad  \textrm{where\ }\|\Delta\| =  o(\|p - f\|) \textrm{\ for\  } p \textrm{\ sufficiently near\ } f.
	\end{equation*}
	For the Euclidean reflection this follows if $\|P_{K}(p) - P_{H_{f}}(p)\| = o(\|p - f\|)$. When this happens we have, for $p$ in a neighborhoods of $f$,
	\begin{align*}
	T_{K,L}(p)\ &=\ \frac{1}{2}\left[p + R_{L}\left(R_{K}(p)\right)\right]\\
	&=\ \frac{1}{2}\left[p + R_{L}\left(R_{H_{f}}(p) + \Delta\right)\right]\\
	&=\ \frac{1}{2}\left[p + R_{L-f}\left(R_{H_{f}}(p) + \Delta - f\right) + f\right]\\
	&=\ \frac{1}{2}\left[p + R_{L-f}\left(\left(R_{H_{f}-f}(p-f) + f\right)+ \Delta - f\right) + f\right]\\
	&=\ \frac{1}{2}\left[p + R_{L-f}\left(R_{H_{f}-f}(p-f)\right)+  R_{L-f}(\Delta) + f\right],\quad \textrm{since\ } R_{L-f} \textrm{\ is linear}\\
	&=\ \frac{1}{2}\left[(p-f) + R_{L-f}\left(R_{H_{f}-f}(p-f)\right)\right]+  \frac{1}{2}R_{L-f}(\Delta) + f\\
	\end{align*}
	Thus we have that
	$$
	T_{K,L}(p)\ =\ f\ +\ T_{(H_{f}-f)(L-f)}(p-f) + \Delta'
	$$
	where $\Delta' = \frac{1}{2}R_{L-f}(\Delta)$ has $\|\Delta'\| = o\|p-f\|$ since $R_{L-f}$ is a bounded linear operator.
	
	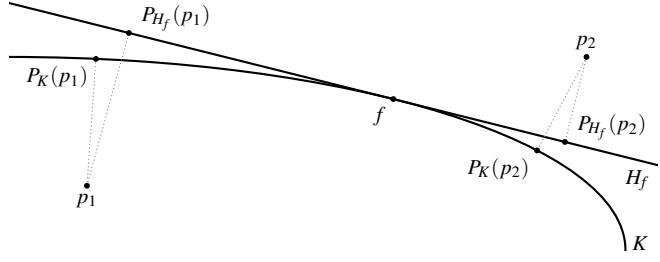
\begin{figure}
		\begin{center}
			\begin{tikzpicture}[x=0.7\textwidth,y=0.7\textwidth] 
			\clip (0,0) rectangle (1,0.39);
			\draw[thick] (0,0) ellipse (0.95 and 0.3);
			
			\coordinate (f) at (.5923153118,.2345494447);
			\coordinate (Hf) at (-.7427399083,.1870469406);
			\coordinate (p1) at (.12,.10);
			\coordinate (PKp1) at (.1338540491,.2970071926);
			\coordinate (PHfp1) at (.1847330394,.3371923972);
			\coordinate (p2) at (.89,.30);
			\coordinate (PKp2) at (.8138347424,.1547603673);
			\coordinate (PHfp2) at (.8567470285,.1679566267);
			
			\draw[thick] (f)+($2*(Hf)$) -- +($-2*(Hf)$);
			
			\draw[gray,densely dotted] (p1) -- (PKp1);
			\draw[gray,densely dotted] (p1) -- (PHfp1);
			\draw[gray,densely dotted] (p2) -- (PKp2);
			\draw[gray,densely dotted] (p2) -- (PHfp2);
			
			\foreach \pt in {(f), (p1), (PKp1), (PHfp1), (p2), (PKp2), (PHfp2)} {
				\fill \pt circle (0.0045);
			}
			
			\node[below left] at (f) {{\(f\)}};
			\node[inner sep=2.5pt, above right] at (0.95,-2.5pt) {\(K\)};
			\node[inner sep=0pt, below right] at (0.95,0.125) {\(H_{\!f}\)};
			
			\node[below] at (p1) {\(p_1\)};
			\node[below left] at (PKp1) {\(P_K\!\left(p_1\right)\)};
			\node[above right] at ($(PHfp1)+(2pt,-2pt)$) {\(P_{H_{\!f}}\!\left(p_1\right)\)};
			
			\node[above] at (p2) {\(p_2\)};
			\node[below left] at (PKp2) {\(P_K\!\left(p_2\right)\)};
			\node[above right] at ($(PHfp2)+(2pt,-2pt)$) {\(P_{H_{\!f}}\!\left(p_2\right)\)};
			\end{tikzpicture}
			\caption{Approximation of $P_{K}$ by $P_{H_{f}}$ near $f$}
			\label{fig:tgtaproxtoellipse}
		\end{center}
	\end{figure}
	
	Thus, by the theorem of Perron (see, \cite{BS} theorem 6.1 or \cite{Lak&DT} Corollary 4.7.2), the system of difference equations corresponding to the Douglas - Rachford algorithm for $K$ and $L$,
	\[ x_{n+1}\ =\ T_{KL}(x_{n}) \]
	is exponentially asymptotically stable at $f$ (in particular $\|x_{n}-f\|\rightarrow 0$ for $x_{0}$ sufficiently near $f$) provided all the eigenvalues of the linear operator $T_{(H_{f}-f)(L-f)}$ have moduli less than one.
	
	\begin{remark} When $M$ is a subspace, the projection $P_{M}$ is linear (as is the case when $M=L-f$) and the Douglas - Rachford operator for $N$ and $M$ becomes
		\begin{align}
		T_{N,M}\ &=\ \frac{1}{2}\left[I + (2P_{M} - I)(2P_{N} - I)\right] \\
		&=\  2P_{M}P_{N}\ -\ P_{M}\ - P_{N}\ +\ I \label{DRopexpanded} \\
		&=\ P_{M}P_{N}\ +\ (I-P_{M})(I-P_{N}).
		\end{align}
		When $N$ is also a subspace (for instance when $N=H_{f}-f$) this may be written as
		\[ T_{NM}\ =\ P_{M}P_{N}\ +\ P_{M^{\perp}}P_{N^{\perp}} \]
		where $^{\perp}$ denotes the orthogonal complement.
		
		As a curiosity, we observe that if in the case of two subspaces we define a \textsl{twisted Douglas-Rachford operator} by $V_{NM}\ :=\ P_{M}P_{N}\ +\ P_{N^{\perp}}P_{M^{\perp}}$, then, since $P_{M}P_{M^{\perp}} = P_{N}P_{N^{\perp}} = 0$, the iterates are $x_{n} = V_{NM}^{n}(x_{0}) = u_{n} + v_{n}$, where $u_{n+1} = P_{M}P_{N}(u_{n})$ and $v_{n+1} = P_{N^{\perp}}P_{M^{\perp}}(v_{n})$. The sequence of twisted Douglas-Rachford approximants is thus the sum of two sequences $(u_{n})$ and $(v_{n})$ resulting from the application of von Neumann's alternating projection algorithm to the pairs of subspaces $M$ and $N$, and $N^{\perp}$ and $M^{\perp}$ respectively. Since the Friedrich angle $\theta$ between  $M$ and $N$ is the same as the angle between $M^{\perp}$ and $N^{\perp}$, the twisted Douglas-Rachford algorithm converges with the same rate as the von Neumann algorithm; namely at a linear rate proportional to $\cos^{2}\theta$ \cite{D}, the same as the rate exhibited by the standard Douglas-Rachford algorithm \cite{BCNPW}.
	\end{remark}
	
	Now we consider the special case of an ellipse and a line. Without loss of generality we consider the ellipse $E_b$ as in \eqref{notations}, and the line $L:\ \alpha x + \beta y = \gamma$, where $ b \geq 1 $, $\beta \geq 0$ and, to ensure the existence of $f = (x_{0},y_{0})\in L\cap E\cap |\mathbb{R}|^{2}$, either $\alpha\leq \gamma\leq \beta b$ or $\beta b <\gamma$ and $\alpha \geq \sqrt{\gamma^{2}-\beta^{2} b^{2}}$.
	
	Following the strategy outlined above leads us to consider the\\
	\emph{Eigenvalues of $T$ for two lines through the origin};
	$$
	L_{1}:  \alpha x + \beta y = 0\quad \textrm{and}\quad L_{2}: Ax + By = 0
	$$
	where, in our context, the latter line, being parallel to the tangent to $E$ at $f$, has $\displaystyle A = x_0$ and $B = {y_{0}}/{b^2}$. It is readily verified that the orthogonal projection onto $L_{1}$ has the matrix
	\begin{equation}\label{projmatrix}
	[P_{L_{1}}] = \frac{1}{\alpha^{2}+\beta^{2}} \left( \begin{array}{cc} \beta^{2} & -\alpha\beta\\ -\alpha\beta & \alpha^{2}\end{array}\right)
	\end{equation}
	with a matching expression for $[P_{L_{2}}]$. Substituting these expressions into $T_{L_{2},L_{1}} = 2P_{L_{1}}P_{L_{2}} - P_{L_{1}} -P_{L_{2}} + I$ yields
	$$
	[T_{L_{1},L_{2}}] = \frac{\psi}{\Delta} \left(\begin{array}{cc} \psi& \omega \\ -\omega & \psi\end{array}\right),
	$$
	where
	$\psi = \alpha A + \beta B$, $\omega =  \alpha B - \beta A$ and $\Delta = (\alpha^{2} + \beta^{2})(A^{2} + B^{2})$,
	which has eigenvalues $\frac{\psi}{\Delta} \left( \psi \pm i\omega\right)$ with modulus squared equal to
	\begin{align*}
	\frac{\psi^{2}}{\Delta^{2}}\left(\psi^{2} + \omega^{2}\right) &= \frac{(\alpha A+ \beta B)^{2}\left((\alpha A+ \beta B)^{2} + (\alpha B - \beta A)^{2}\right)}{\left(\alpha^{2} + \beta^{2}\right)^{2}\left(A^{2} + B^{2}\right)^{2}}\\
	&= \frac{(\alpha A+ \beta B)^{2}}{\left(\alpha^{2} + \beta^{2}\right)\left(A^{2} + B^{2}\right)} < 1
	\end{align*}
	Thus, as expected, for any two lines intersecting in a single point the Douglas - Rachford algorithm with any starting point spirals exponentially to their common point.
	
	Therefore the Douglas-Rachford
	algorithm for a line and an ellipse $E$ is locally convergent at each of the feasible points $f$ provided
	$$
	\|P_{E}(p) - P_{H_{f}}(p)\| = o(\|p - f\|),
	$$
	for all $p$ in some neighborhood of $f$.
	
	To see this we follow an argument suggested by Asen Dontchev \cite{Don}. While we present the argument in the particular case of $E_b=\{x|\varphi_b(x)-1=0\}$, the astute reader will observe that it applies to any smooth hypersurface $K:=\{ g(x) = 0\}$ at any point $f = (x_{0},y_{0})\in K$ at which the gradient $\nabla g$ is non-singular (true for the ellipse as $\nabla g(x) = (2x, 2y/b^2)$) and so applies to $p$-spheres except near the extreme points of the sphere when  $0 < p \leq 1$.
	
	We begin by noting that for the supporting hyperplane (tangent) to $E$ at $f$
	$$
	P_{H_{f}}(p) = f +  P_{H_{f}-f}(p-f)
	$$
	where
	$$
	\left[P_{H_{f}-f}\right]= \frac{b^4}{b^4x_{0}^{2}+y_{0}^{2}}\left(\begin{array}{cc} y_{0}^{2}/b^4& -x_{0}y_{0}/b^2\\ -x_{0}y_{0}/b^2 & x_{0}^{2}\end{array}\right).
	$$
	Next we observe that the nearest point projection  $(u(p),v(p)) = P_{E}(p)$ at $p = (\zeta,\eta)$ is the solution of
	\begin{align*}
	\text{minimize:\ } &\frac{1}{2}\|P_{E}(p) - p\|^{2} = \frac{1}{2}\left((u-\zeta)^{2} + (v-\eta)^{2}\right)\\
	\text{subject to:\ } &g\left(P_{E}(p)\right) =  u^2 + \left(\frac{v}{b}\right)^{2} - 1 = 0,
	\end{align*}
	which, since $\nabla g\left(P_{E}(p)\right)$ is non-singular, is characterized via the method of Lagrange multipliers by $\nabla\frac{1}{2}\left((u-\zeta)^2 + (v-\eta)^2\right) + \lambda\nabla g\left(P_{E}(p)\right) = 0$ together with $g\left(P_{E}(p)\right)=0$, that is,
	\begin{align*}
	f_{1}:&\ u-\zeta  + 2\lambda u = 0\\
	f_{2}:&\ v-\eta + 2\lambda \frac{v}{b^2} = 0\\
	g:&\  u^2 + \left(\frac{v}{b}\right)^{2} - 1 = 0.
	\end{align*}
	As an aside, this yields the implicit specification
	$$
	P_{E}(\zeta,\eta) = \left(\frac{\zeta}{1+2\lambda}, \frac{b^2\eta}{b^2+2\lambda}\right),\quad \textrm{where\ } \frac{\zeta^{2}}{\left(1+2\lambda \right)^{2}}\ +\ \frac{b^2\eta^{2}}{\left(b^2+2\lambda\right)^{2}}=1.
	$$
	In order to apply the implicit function theorem to ensure that $u,v$ and $\lambda$ are differentiable functions of $\zeta$ and $\eta$ in a neighborhoods of $f$ we require the Jacobian of the above system of equations with respect to the dependent variables, $u,v$ and $\lambda$ at $f$,  $J(f)$, to be non-singular. Since $P_{E}(f) = f$ we see from the first (and second) equation that for $p=f$ the corresponding Lagrange multiplier is necessarily $0$. Thus,
	$$
	J(f)\ =\  \left. \frac{\partial(f_{1},f_{2},g)}{\partial(u,v,\lambda)}\right|_{(x_{0},y_{0},0)}\ =\ \left(\begin{array}{ccc} 1 & 0 & 2x_{0}\\ 0 & 1 & 2y_{0}/b^2\\ 2x_{0} & 2y_{0}/b^2 & 0\end{array}\right),
	$$
	more generally $\displaystyle J(f)\ =\ \left(\begin{array}{cc} I & \nabla g(f)^{T} \\ \nabla g(f) & 0 \\ \end{array}\right)$,
	which is indeed non-singular, and in our case
	$$
	J(f)^{-1} = \frac{b^{4}}{b^4x_{0}^{2}+y_{0}^{2}}\left(\begin{array}{ccc} y_{0}^2/b^4 & -x_{0}y_{0}/b^{2} & {x_{0}/2}\\  -x_{0}y_{0}/b^{2} & x_{0}^{2} & y_{0}/2b^2\\ {x_{0}}/2 & y_{0}/2b^2 & -1/4\end{array}\right).
	$$
	Thus the implicit function theorem applies, yielding
	$$
	\left(\begin{array}{c} \left[P^{\prime}_{E}(f)\right]\\ \left[\lambda^{\prime}(f)\right]\end{array}\right) = \left. \frac{\partial(u,v,\lambda)}{\partial(\zeta,\eta)}\right|_{(x_{0},y_{0}))}\ =\
	J(f)^{-1}\frac{\partial(f_{1},f_{2},g)}{\partial(\zeta,\eta)}\ =\
	J(f)^{-1}\left(\begin{array}{c} I\\ 0\end{array}\right),
	$$
	whence $\displaystyle \left[P^{\prime}_{E}(f)\right] = \frac{b^{4}}{b^4x_{0}^{2}+y_{0}^{2}}\left(\begin{array}{cc} y_{0}^{2}/b^4& -{x_{0}y_{0}}/{b^{2}}\\-{x_{0}y_{0}}/{b^{2}}& {x_{0}^{2}}\end{array}\right)$ which we recognize as $\left[P^{\prime}_{H_{f}}(f)\right]$ and we are able to conclude that near $f$
	$$
	\|P_{E}(p) - P_{H_{f}}(p)\| = \|f+P^{\prime}_{E}(f)(p-f)+\Delta\ -\ \left(f\ + \  P_{H_{f}-f}(p-f)\right)\|\ =\ \|\Delta\|
	$$
	where $\|\Delta\|\ = o(\|p-f\|)$ as required.
	
	Thus, for an ellipse and generically intersecting line, the Douglas-Rachford algorithm is locally convergent at each of the feasible points.
	
	\section{Important Lessons about Global Behavior}
	
	What we have observed in our computer-assisted study of these two simple cases of a line together with an ellipse or a $p$-sphere is remarkably informative: it suggests likely explanations for the behavior of the algorithm both for feasible and infeasible cases. We consider feasible cases first.
	
	\begin{figure}
		\begin{center}
			\includegraphics[width=0.30\textwidth]{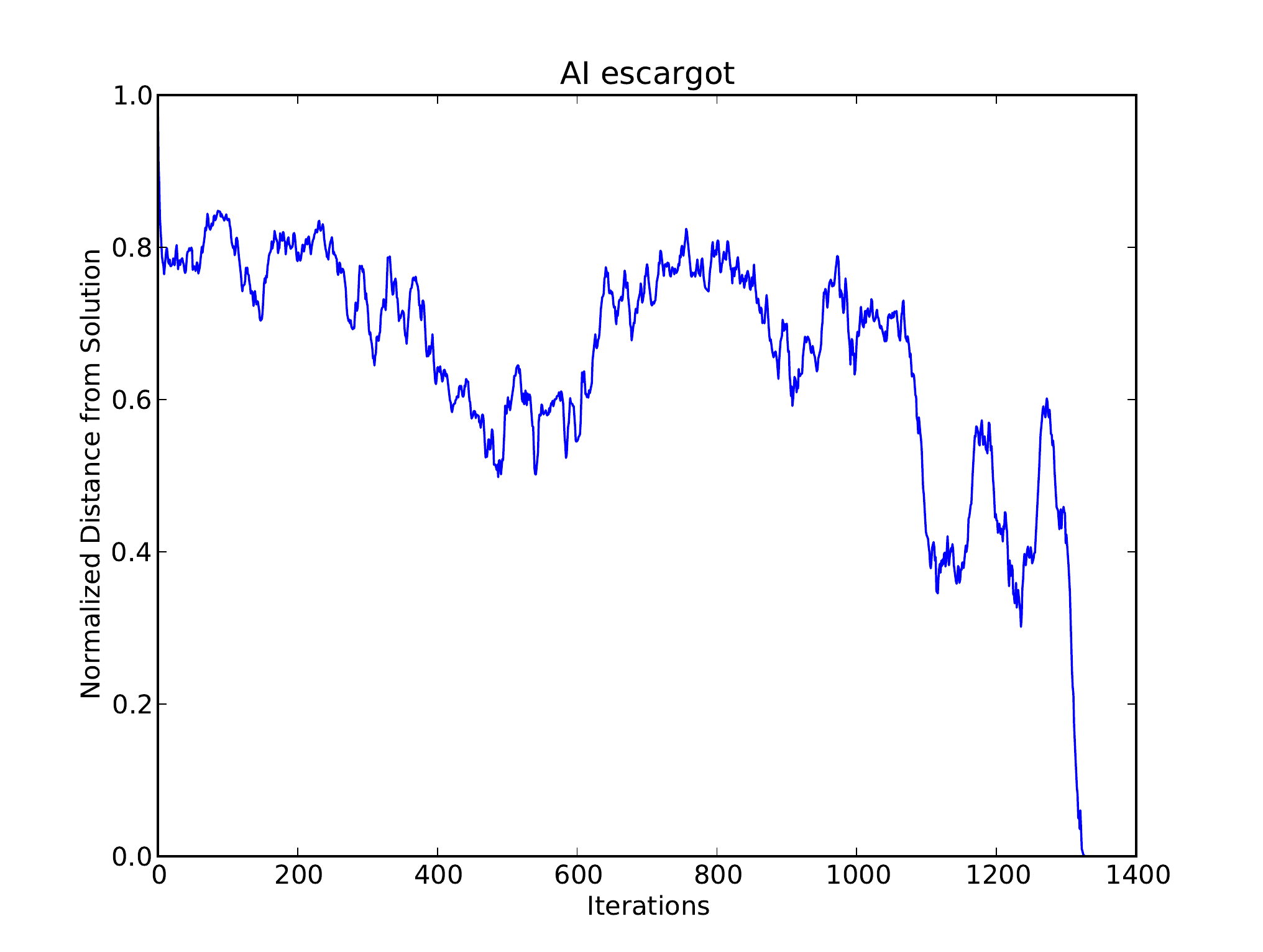}
			\hspace{0.1\textwidth}
			\includegraphics[width=0.30\textwidth]{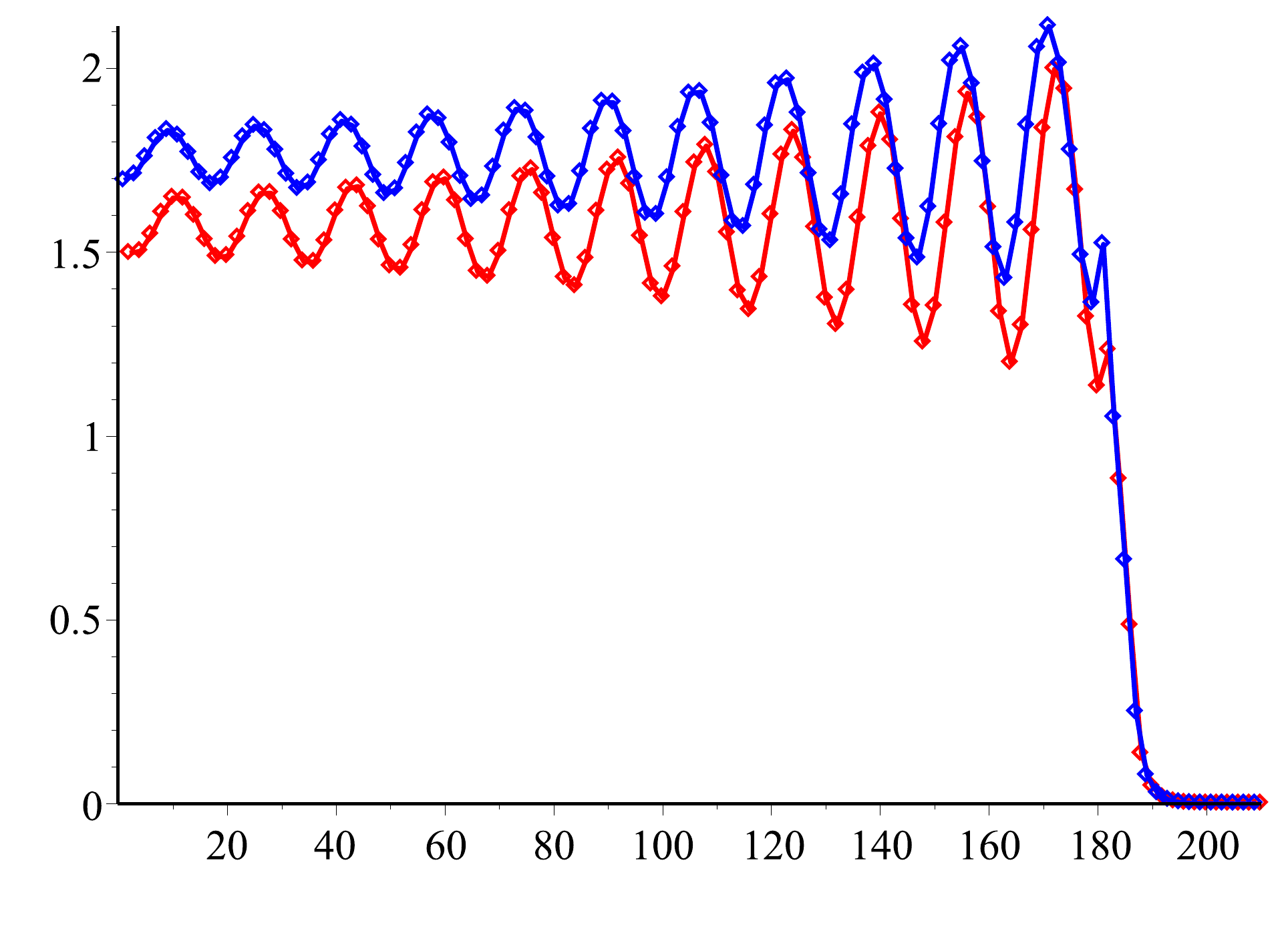}
			\caption{Distance from iterates to solution. Left: for a sudoku puzzle \cite{ABT1}. Right: for $T_{E_2,L_2}$}
			\label{fig:sudoku1}
		\end{center}
	\end{figure}
	
	\subsection{The Feasible Case}
	
	Arag\'{o}n Artacho, Borwein, and Tam experimented with using the Douglas-Rachford method to solve Sudoku puzzles  \cite{ABT1}. The left hand images of Figures~\ref{fig:sudoku1} and \ref{fig:sudoku2}, illustrate the distance to the solution by iterations of Douglas-Rachford for two different sudoku puzzles. First consider Figure~\ref{fig:sudoku1}. On the left, we see the algorithm struggle for a long period of time before finally converging. Compare this to the image on the right: for $T_{E_2,L_2}$ with 210 iterates, distance of each iterate---to the particular feasible point the sequence converges to---is plotted. The subsequences $x_{2k}$ and $x_{2k-1}$ are colored light and dark grey respectively. They correspond respectively to iterates landing in the domain of attraction for the left and right feasible points, see~\ref{fig:sourcebasinspirals}. Without the geometric intuition gleaned from Figure~\ref{fig:sourcebasinspirals}, we would not know to color these two subsequences distinctly, and the error plot would not reflect the behavior as clearly. Once the iterates have finally climbed free of the influence of the repelling period 2 points, we see a sudden rapid convergence to the relevant feasible point.
	
	\begin{figure}
		\begin{center}
			\includegraphics[width=0.3\textwidth]{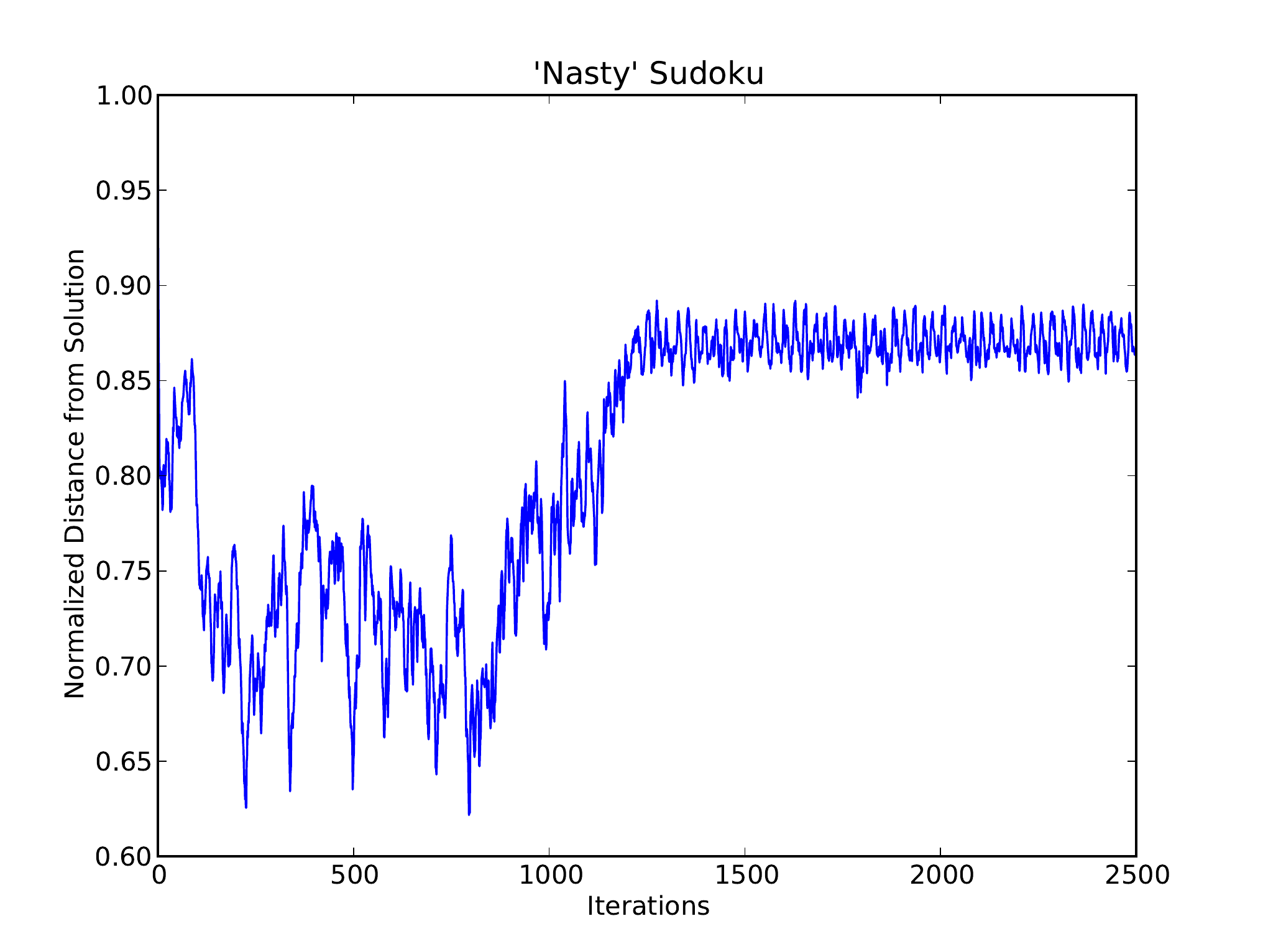}
			\hspace{0.1\textwidth}
			\includegraphics[width=0.3\textwidth]{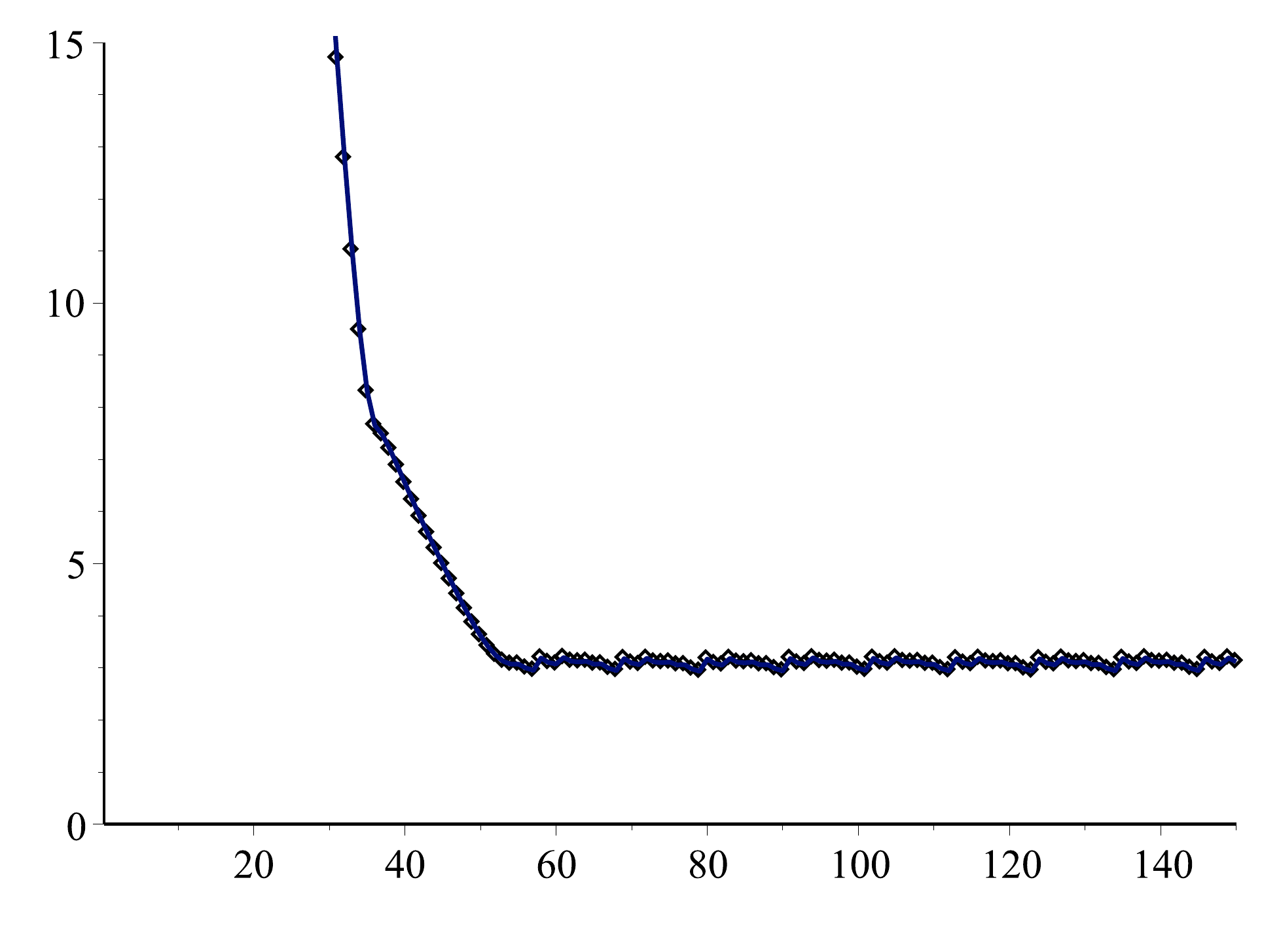}
			\caption{Distance from iterates to a solution. Left: a sudoku puzzle \cite{BT}. Right: $T_{E_{14},L_9}$}
			\label{fig:sudoku2}
		\end{center}
	\end{figure}
	
	Now consider Figure~\ref{fig:sudoku2}. At left, we see distance from the solution of the iterates for a different sudoku puzzle. This time the error stabilizes after a time without any indication of impending convergence. At right, we see the iterates for $T_{E_{14},L_9}$. The iterates approach the ellipse before being pulled into the attractive domain for some period $11$ points, preventing convergence.
	
	\begin{figure}
		\begin{center}
			\includegraphics[width=0.3\textwidth]{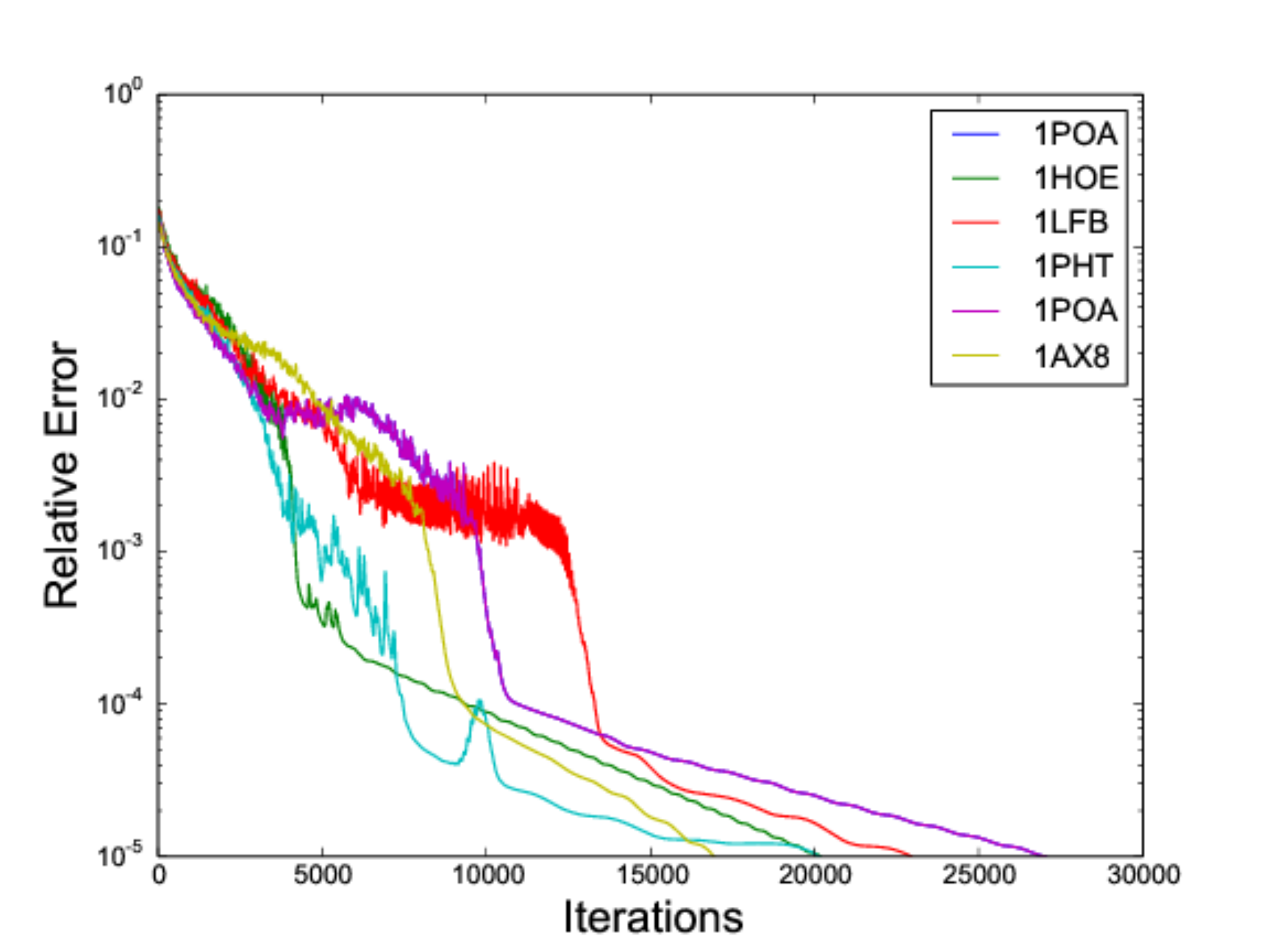}
			\hspace{0.1\textwidth}
			\includegraphics[width=0.3\textwidth]{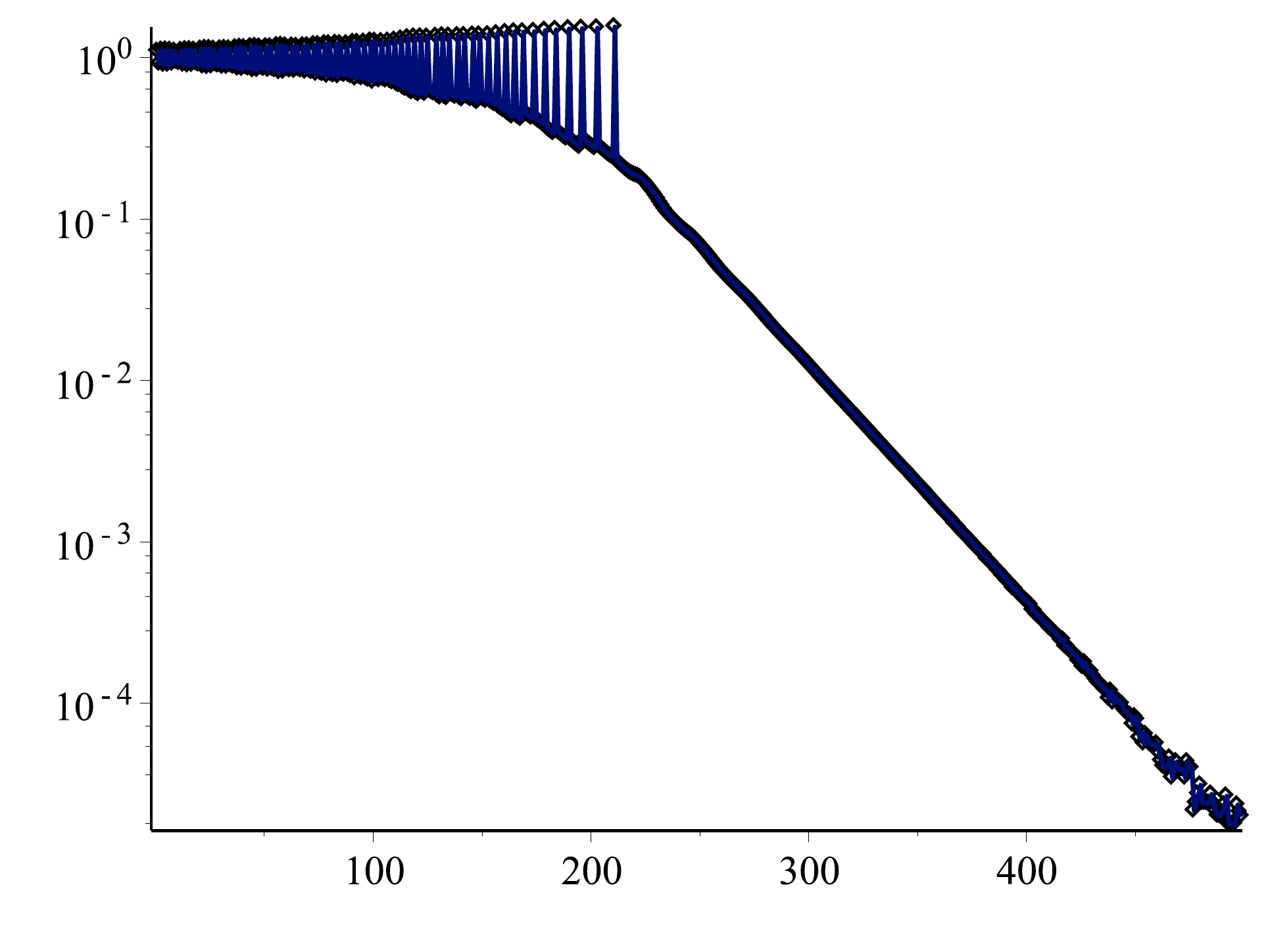}
			\caption{Distance of iterates from solution (scale logarithmic). Left: for five proteins \cite{BT}, Right: for the iterates of $T_{E_8,L_6}$ pictured in Figure~\ref{fig:tracingbasins}}
			\label{fig:proteinconvergence}
		\end{center}
	\end{figure}
	
	Experiments have also been conducted using the Douglas-Rachford method to solve matrix completion problems associated with incomplete euclidean distance matrices for protein mapping \cite{ABT2,BB,BT}. Consider Figure~\ref{fig:proteinconvergence}. The left image shows the relative error of iterates when solving the Euclidean distance matrices for various proteins. The right image shows the relative error for the iterates of $T_{E_8,L_6}$  when the sequence of iterates is started near to domains of attraction.
	
	\begin{figure}
		\begin{center}
			\includegraphics[angle=90,width=0.7\textwidth]{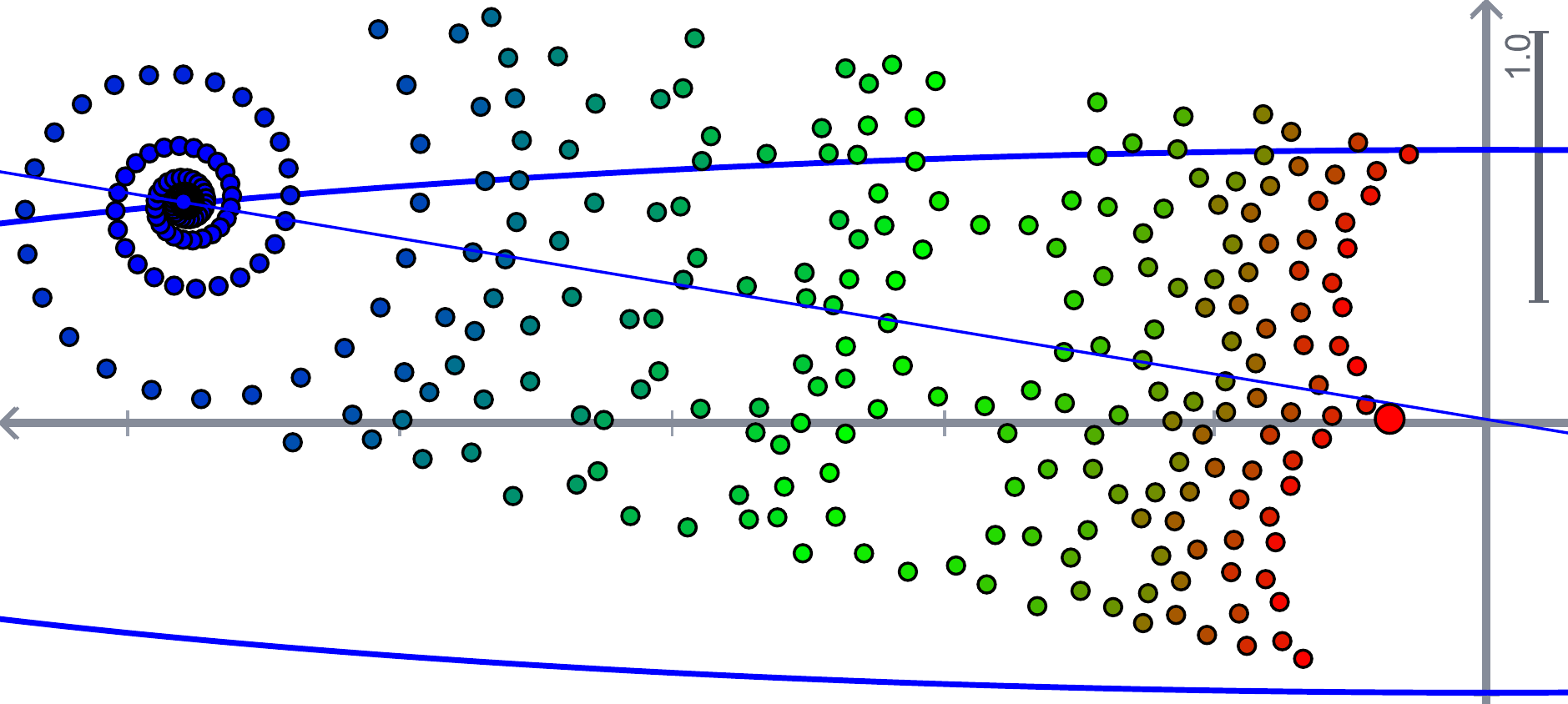}
			\caption{A convergent sequence of iterates of $T_{E_8,L_6}$ traces the outline of the domains}
			\label{fig:tracingbasins}
		\end{center}
	\end{figure}
	
	The exact iterates used to generate this data are shown in Figure~\ref{fig:tracingbasins}; they appear to trace out the shapes of the attractive domains for periodic points, narrowly avoiding them on their way to eventual convergence. Points started relatively close to domains of attraction for periodic points (as in the right hand side of Figure~\ref{fig:sudoku1}) appear to take  longer to converge than those started elsewhere.
	
	While we cannot say with any real certainty that the behavior when solving these Euclidean distance matrices is analogous to iterates climbing away from repelling points or dodging and weaving between a nest of attractive domains, it is surprising that a system as simple as that of a line and ellipse can create behavior so similar to that observed in far more complicated scenarios.
	
	\subsection{Infeasible Cases}
	
	For the infeasible cases of line and the p-sphere or ellipse, we observed that the iterates of the Douglas-Rachford algorithm appear to walk to infinity with a roughly linear step size. In both infeasible cases, it is possible to strictly separate the two sets in question. This led to the following theorem.
	
	\begin{theorem}\label{convexseparation}
		Let $x_{n+1} = T_{A,B}(x_n)$ and suppose one of the following:
		\begin{enumerate}
			\item $A$ is compact and ${\rm co}(A)$ and ${\rm cl}({\rm co}(B))$ are disjoint.
			\item $B$ is compact and ${\rm cl}({\rm co}(A))$ and ${\rm co}(B)$ are disjoint.
		\end{enumerate}
		Then $\|x_n\|$ tends linearly to $\infty$ with a step size of at least $d(A,B)$.
	\end{theorem}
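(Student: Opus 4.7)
The plan is to exploit strict convex separation to produce a fixed ``drift direction'' that is common to every Douglas--Rachford step, and then telescope. First I would invoke a strict separation theorem: under hypothesis (1), ${\rm co}(A)$ sits inside the compact set ${\rm cl}\,{\rm co}(A)$ and is disjoint from the closed convex set ${\rm cl}\,{\rm co}(B)$, so there exist a unit vector $v$ and a constant $c > 0$ with
\[
\langle v,\, b - a \rangle \;\ge\; c \qquad \text{for all } a \in {\rm co}(A),\ b \in {\rm cl}\,{\rm co}(B).
\]
In particular the same inequality holds for every $a \in A$ and $b \in B$, and hence $d(A,B) \ge c > 0$. Hypothesis (2) is completely symmetric.

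Next I would rewrite a single Douglas--Rachford step in a form that exposes the relevant projections. From $T_{A,B} = \tfrac{1}{2}(I + R_B R_A)$ together with $R_A = 2P_A - I$, a direct expansion yields
\[
T_{A,B}(x) \;=\; x + P_B R_A(x) - P_A(x),
\]
so that for any selections $a_n \in P_A(x_n)$ and $b_n \in P_B(R_A(x_n))$ the iterate satisfies $x_{n+1} - x_n = b_n - a_n$, with $a_n \in A$ and $b_n \in B$ by definition of projection.

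Combining these two ingredients closes the argument in two short lines. Membership alone bounds the step size: $\|x_{n+1} - x_n\| = \|b_n - a_n\| \ge d(A,B)$. Telescoping against $v$ and invoking the separation inequality gives
\[
\langle v, x_n \rangle \;=\; \langle v, x_0 \rangle + \sum_{k=0}^{n-1} \langle v,\, b_k - a_k \rangle \;\ge\; \langle v, x_0 \rangle + n c,
\]
and by Cauchy--Schwarz $\|x_n\| \ge \langle v, x_n \rangle \to \infty$ at linear rate at least $c$. I do not expect a genuine obstacle here; the only subtle point is the passage from the as-stated disjointness of ${\rm co}(A)$ and ${\rm cl}\,{\rm co}(B)$ to a true \emph{strict} separation, which is immediate once one notes that the compactness of $A$ makes the closed convex hull of $A$ compact, so the standard Hahn--Banach separation of a compact convex set from a disjoint closed convex set applies. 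The essential insight is that a single direction $v$ supplied by convex separation controls every Douglas--Rachford displacement $b_n - a_n$ simultaneously, regardless of where $x_n$ happens to lie, and this is exactly what forces the linear escape to infinity.
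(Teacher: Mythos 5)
Your proposal is correct and follows essentially the same route as the paper: strictly separate ${\rm co}(A)$ from ${\rm cl}({\rm co}(B))$, observe that $x_{n+1}-x_n = P_B(R_A(x_n)) - P_A(x_n) \in B-A$, and telescope a linear functional to force escape to infinity. If anything, your explicit split between the step-size bound $\|x_{n+1}-x_n\|\ge d(A,B)$ (which comes purely from membership in $B-A$) and the drift rate $c$ in the normal direction (which comes from the separation gap and may be smaller than $d(A,B)$) is slightly more careful than the paper's phrasing of the same argument.
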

	\begin{proof}
		we suppose that (1) applies. The proof when (2) applies is obtained by interchanging the roles of $A$ and $B$. Now, we can strictly separate ${\rm co}(A)$ and ${\rm cl}({\rm co}(B))$ with a hyperplane $\mathbb{H}=f^{-1}(\alpha)$ for some linear functional $f$. See \cite[Theorem 1.7]{Brezis} for details. By translation invariance, let $\alpha=0$. Then $\mathbb{H}$ is a subspace, so we can uniquely describe any $x \in X$ as $x =h_{x} + y_{x}$ where $h_{x} \in \mathbb{H}$ and $y_{x} \in \mathbb{H}^\perp$. We can impose several additional properties on $f$:
		\begin{align}
		|f(x)| = |f(h_x+y_x)| &= \|y_x\|_X \quad \text{for all } x\in X \label{fvalues2}\\
		f(x) & < 0 \quad \text{for all } x\in A \label{favalues2}\\
		f(x) & > 0 \quad \text{for all } x\in B \label{fbvalues2}.
		\end{align}
		Equations~\eqref{fvalues2}, \eqref{favalues2}, and \eqref{fbvalues2} imply that $f(x) \geq d(A,B)$ for all $x\in B-A$. Now
		\begin{align*}
		x_{n+1}-x_n = \frac{R_B(R_A(x_n))+x_n}{2}-x_n &= \frac{R_B(P_A(x_n))-x_n}{2}\\
		&=\frac{2P_B(R_A(x_{n}))-R_A(x_{n})-x_n}{2}\\
		&=\frac{2P_B(R_A(x_{n}))-(2P_A(x_{n})-x_n)-x_n}{2}\\
		&= P_B(R_A(x_n))-P_A(x_n) \in B-A.
		\end{align*}
		Now $x_{n+1}-x_n \in B-A$ implies that
		$f(x_{n+1}-x_n) \geq d(A,B)$.
		Thus we have that, for all $n$, $f(x_{n+1}) \geq d(A,B) + f(x_n)$. This shows that $\|x_n\|_n \rightarrow \infty$ with a linear step size of at least $d(A,B)$.
		\qed\end{proof}
	From this result we obtain the following corollary, the computer-assisted discovery of which motivated the pursuit of the more general Theorem.
	\begin{corollary}
		In the infeasible case of a line $L$ with an ellipse $E$ or a p-sphere $S$, we have that $\|x_n\| \rightarrow \infty$ with a linear step size greater than or equal to $d(E,L)$ or $d(S,L)$ respectively.
	\end{corollary}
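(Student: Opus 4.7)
The plan is to invoke Theorem \ref{convexseparation} directly, in each case with the compact constraint set playing the role of $A$. Both the ellipse $E$ and the $p$-sphere $S$ are bounded closed subsets of $\mathbb{R}^{2}$, hence compact; the line $L$ is already closed and convex, so ${\rm cl}({\rm co}(L))=L$. Thus hypothesis (1) of the theorem reduces to verifying ${\rm co}(E)\cap L=\emptyset$ (respectively ${\rm co}(S)\cap L=\emptyset$) in the infeasible case.

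For the ellipse, ${\rm co}(E)$ is the closed filled ellipse $\{(x,y):x^{2}+(y/b)^{2}\leq 1\}$. If a line $L$ with $L\cap E=\emptyset$ were to meet this convex body, then it would enter its interior; but any affine parameterization of such a line would then cross the boundary $E$ by the intermediate value theorem, contradicting infeasibility. Hence infeasibility forces $L\cap{\rm co}(E)=\emptyset$, and the theorem yields $\|x_{n}\|\to\infty$ with linear step size at least $d(E,L)$, as claimed.

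For the $p$-sphere the argument runs identically when $p\geq 1$, since $S_{p}$ bounds a convex region that coincides with ${\rm co}(S_{p})$, and the intermediate value argument above applies verbatim. When $0<p<1$ (or other $p$ for which $S_{p}$ is non-convex) the main obstacle is that ${\rm co}(S_{p})$ is strictly larger than the region enclosed by $S_{p}$: the convex hull is obtained by adjoining to the concave arcs of $S_{p}$ the chord segments joining its extreme points on the coordinate axes. In each infeasible configuration one still verifies that $L$ is separated from this slightly larger convex body, either by a direct case analysis on the position of $L$ relative to those chords, or by noting that the infeasible configurations considered in the paper never place $L$ between the concave portion of $S_{p}$ and its bounding chord. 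Given this geometric verification, Theorem \ref{convexseparation} produces the claimed linear growth with step size at least $d(S,L)$, completing the corollary.
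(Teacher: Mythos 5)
Your proposal is correct and follows the only route available: the paper states this corollary as an immediate consequence of Theorem~\ref{convexseparation} and supplies no further argument, so your work consists precisely of the hypothesis-verification that the paper leaves implicit. Your treatment of the ellipse and of $S_p$ with $p\geq 1$ is complete. The one place you leave a sketch is the case $0<p<1$, and your fallback remark (that the paper's infeasible configurations ``never place $L$'' in a petal region) is weaker than what is true and makes the conclusion sound configuration-dependent when it is not. The gap closes in one line: if $L$ meets ${\rm co}(S_p)$ (the filled diamond $|x|+|y|\leq 1$) but misses $S_p$, then since $|x|^p+|y|^p\geq |x|+|y|$ on the diamond, an intermediate value argument forces the segment $L\cap{\rm co}(S_p)$ to avoid the inner region $\{|x|^p+|y|^p<1\}$ and hence to lie in a single petal between one concave arc and its chord; its two endpoints then lie on that single chord segment, so $L$ is the chord's own line, which passes through the extreme points of $S_p$ --- a contradiction. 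With that observation your argument is complete and agrees with the paper's intent.
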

	
	Using these results and the following remark, we can naturally extend some of the convex theory to the non-convex case.
	\begin{remark}\label{nearestpointconvergence}
		As a consequence of \cite[Theorem 4.5]{ShadowSequence} we have, for convex subsets $U,\, V$ of a Hilbert space $H$, with $U\cap(v+V) \ne \emptyset$, where $v=P_{{\rm cl}({\rm ran}({\rm Id}-T_{U,V}))}(0)$ is the \textsl{minimal displacement vector}, and for $x\in X$, that $(P_UT_{U,V}^n x)_n$ converges weakly to a point in $U\cap(v+V)$.
	\end{remark}
	We extend this result in our context using Theorem~\ref{convexseparation}.
	\begin{theorem}
		Let $A,\, B$ be the respective boundaries of two disjoint closed convex sets $U,V$ in $H$, one of which is compact (so $A$, $B$ satisfy the requirements of Theorem~\ref{convexseparation}). Let $ x_{n+1} = T_{A,B}(x_n)$ and $v:=P_{{\rm cl}({\rm ran}({\rm Id}-T_{U,V}))}(0)$, the uniquely defined element in ${\rm cl}({\rm ran}({\rm Id}-T_{U,V}))$ such that $\|v\|=\underset{x\in X}{\inf}\|x-T_{U,V}x\|$, and let $v'=P_{{\rm cl}({\rm ran}({\rm Id}-T_{A,B}))}(0)$, the uniquely defined element in ${\rm cl}({\rm ran}({\rm Id}-T_{A,B}))$ such that $\|v'\|=\underset{x\in X}{\inf}\|x-T_{A,B}x\|$. Then, for $x\in X$, we have that $(P_AT_{A,B}^n x)_n$ converges weakly to a point in $A\cap(v'+B)$.
	\end{theorem}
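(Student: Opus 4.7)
The strategy is to force the non-convex Douglas-Rachford iteration for $(A,B)$ to coincide with the convex iteration for $(U,V)$ from some index onward, so that the shadow convergence in Remark \ref{nearestpointconvergence} can be inherited by the tail. Without loss of generality assume the compact set among $\{U,V\}$ is $U$; the other case is entirely symmetric (one extracts subsequences from $B$ instead of $A$ in the uniqueness argument below).

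The plan has three main stages. First, record the elementary fact that for any $p\notin U$, the nearest point in the convex set $U$ lies on $\partial U=A$, so $P_A(p)=P_U(p)$ (and is single-valued); analogously for $B$ and $V$. Second, let $f$ be the strictly separating functional from the proof of Theorem \ref{convexseparation}, normalized so that $f<0$ on $U$ and $f>0$ on $V$; that proof already yields $f(x_n)\to+\infty$, so $x_n\notin U$ for large $n$, and because $f(P_A(x_n))\leq 0$ one gets $f(R_A(x_n))=2f(P_A(x_n))-f(x_n)\leq -f(x_n)\to-\infty$, forcing $R_A(x_n)\notin V$ for large $n$ as well. These two escapes give $T_{A,B}(x_n)=T_{U,V}(x_n)$ for all $n\geq N$, so the tail $(x_n)_{n\geq N}$ is a convex Douglas-Rachford orbit. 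Third, since $U$ is compact, the closest-pair problem for $U,V$ admits an optimizer $(u^*,w^*)$, whence $u^*\in U\cap(v+V)$ and the hypothesis of Remark \ref{nearestpointconvergence} is met. Applying it to the tail yields $P_U(x_n)\rightharpoonup u^\infty\in U\cap(v+V)$. Writing $u^\infty=v+w^\infty$, the pair $(u^\infty,w^\infty)$ achieves $d(U,V)$, which forces $u^\infty\in\partial U=A$ and $w^\infty\in\partial V=B$ (a best approximation point in a convex set lies on its boundary). Combined with $P_A(x_n)=P_U(x_n)$ eventually, we conclude $P_A(x_n)\rightharpoonup u^\infty\in A\cap(v+B)$.

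The main obstacle is identifying $v'$ with $v$, since ${\rm cl}({\rm ran}({\rm Id}-T_{A,B}))$ is not a priori convex and uniqueness of the minimum-norm element is not automatic from its definition. The inclusion $v\in{\rm cl}({\rm ran}({\rm Id}-T_{A,B}))$ follows from the eventual coincidence: $x_n-T_{A,B}(x_n)=x_n-T_{U,V}(x_n)\to v$ for any starting point. Conversely, every $x-T_{A,B}(x)=P_A(x)-P_B(R_A(x))$ lies in $A-B\subseteq U-V$, so $\|x-T_{A,B}(x)\|\geq d(U,V)=\|v\|$. This already gives $\|v'\|=\|v\|$. To upgrade equality of norms to $v'=v$, I would take any minimum-norm element $w$ of the closure, write it as the norm limit of differences $a_k-b_k$ with $a_k\in A$, $b_k\in B$, extract a convergent subsequence $a_k\to a\in A$ using compactness of $A$, observe that $b_k\to a-w\in B$ since $B$ is closed, and thereby exhibit $(a,a-w)$ as an optimal pair realizing $d(U,V)$. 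Uniqueness of the gap vector for disjoint convex closest-pair problems (a standard parallelogram-law consequence of strict convexity of the squared norm applied to two optimal pairs) then forces $w=v$. With $v=v'$ in hand, the weak limit $u^\infty$ lies in $A\cap(v'+B)$, completing the proof.
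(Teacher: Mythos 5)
Your proposal is correct and follows essentially the same route as the paper: use the separating functional from Theorem~\ref{convexseparation} to show $f(x_n)$ increases without bound, deduce that $P_A=P_U$ and $P_B=P_V$ apply along the orbit so that $T_{A,B}$ and $T_{U,V}$ eventually coincide, and then apply the shadow-sequence convergence of Remark~\ref{nearestpointconvergence} to the tail. The additional care you take in justifying $v'=v$ (via compactness of $A$ and uniqueness of the gap vector) and in placing the weak limit on the boundaries is detail the paper leaves implicit, but the argument is the same.
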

	
	\begin{proof}
		By the closure of both sets and compactness of one of the sets, we have attainment of elements which minimize the distance. Thus $A\cap(v'+B) = U\cap(v+V) \ne \emptyset$.
		
		Let $f$ be defined as in Theorem~\ref{convexseparation} so that $f(u)< 0$ for all $u \in U$. Then the sequence $f(x_n)$ is monotone increasing and so there exists some $n' \in \mathbb{N}$ such that $f(x_n) \geq 0$ for all $n\geq n'$. Suppose $n\geq n'$. Then we have that $x_n \notin U$. Thus $
		P_A(x_n) = P_{U}(x_n)$, and so $R_A(x_n) = R_U(x_n)$. We also have that $R_A(x_n)\notin V$. Thus $P_B(P_A(x_n)) = P_{V}(R_A(x_n))$, and so $R_B(R_A(x_n)) = R_{V}(R_A(x_n))$. Thus we have that $T_{A,B}(x_n)=T_{U,V}(x_n)$, and so
		\begin{equation}
		(P_AT_{A,B}^{n'+n} x)_n=(P_AT_{A,B}^n x_{n'})_n=(P_AT_{U,V}^n x_{n'})_n
		\end{equation}
		for all $n \in \mathbb{N}$. We have from Remark~\ref{nearestpointconvergence} that the sequence on the right converges to a point $y$ in $U\cap(v+V)$.
		\qed\end{proof}

	\subsection{Closing Remarks}
	
	Given that we are investigating the Douglas-Rachford method applied to some of the simplest possible instances of a non-convex set, the emergence of such complexity is extraordinary. More interesting from a technical standpoint is the similarity with which the behavior in such simple situations appears to resemble some of what is observed for much larger and more complicated ones.
	
	We also hope that we have succeeded in making a case for computer-assisted discovery, visualization, and verification. \emph{"A heavy warning used to be given [by lecturers] that pictures are not rigorous; this has never had its bluff called and has permanently frightened its victims into playing for safety. Some pictures, of course, are not rigorous, but I should say most are (and I use them whenever possible myself)."}---J. E. Littlewood, 1885-1977	[from Littlewood's Miscellany, p. 35 in the 1953 edition], said long before the current powerful array of graphic, visualization and geometric tools were available.

\end{document}